\pdfoutput=1
\documentclass{article}

\usepackage{libertine}
\usepackage[T2A,T1]{fontenc}
\usepackage[utf8]{inputenc}      %
\usepackage[libertine]{newtxmath}
\usepackage[scaled=0.83]{beramono}
\usepackage{eucal}
\usepackage[babel]{microtype}
\usepackage[main=american]{babel}
\usepackage[strict]{csquotes}
\usepackage{alphabeta}              %
\usepackage[fleqn,leqno]{mathtools} %
\usepackage[fleqn,leqno]{amsmath}   %
\usepackage[acronym,toc,nomain]{glossaries}  %
\usepackage{hyperref}               %
\usepackage[capitalise]{cleveref}   %
\usepackage[dvipsnames]{xcolor}     %
\usepackage[ocgcolorlinks]{ocgx2}[2017/03/30]	%

   \let\digamma\relax                   %
\usepackage{amsfonts}
\usepackage{amsthm}
\usepackage{booktabs}
\usepackage{changepage}
\usepackage{diagbox}
\usepackage{dynkin-diagrams}
\usepackage{enumitem}
\usepackage{parskip}
\usepackage{pgfplots}
\pgfplotsset{compat=1.16}     %
\usepackage{rotating}
\usepackage{siunitx}
\usepackage{subcaption}
\usepackage{thmtools}
\usepackage{tikz}
\usepackage{tikz-cd}
\usetikzlibrary{babel}
\usepackage[colorinlistoftodos, textsize = footnotesize]{todonotes}
\usepackage{xparse}
\usepackage{xspace}
\usetikzlibrary{calc,tqft,decorations.markings}

\usepackage[backend=biber,maxbibnames=99,doi=false,sortcites]{biblatex}
\newtoggle{nome}
\toggletrue{nome}
\mathtoolsset{centercolon}

\newcounter{todocounter}
\DeclareDocumentCommand\addreference{g}{\stepcounter{todocounter}\todo[color = blue!30]{\thetodocounter. Add reference\IfNoValueF{#1}{: #1}}\xspace}
\DeclareDocumentCommand\checkthis{g}{\stepcounter{todocounter}\todo[color = red!50]{\thetodocounter. Check this\IfNoValueF{#1}{: #1}}\xspace}
\DeclareDocumentCommand\fixthis{g}{\stepcounter{todocounter}\todo[color = orange!50]{\thetodocounter. Fix this\IfNoValueF{#1}{: #1}}\xspace}
\DeclareDocumentCommand\expand{g}{\stepcounter{todocounter}\todo[color = green!50]{\thetodocounter. Expand\IfNoValueF{#1}{: #1}}\xspace}

\tikzset{every loop/.style={min distance=1cm, looseness=30}}
\tikzset{vertex/.style={circle, draw, black, minimum size=6pt, inner sep=0pt}}
\tikzset{half-vertex/.style={semicircle, draw, fill, black, minimum size=3pt, inner sep=0pt, yshift=1.5pt}}

\tikzset{middlearrow/.style={decoration={markings, mark= at position 0.5 with {\arrow{#1}},}, postaction={decorate}}}

\declaretheoremstyle[
  spaceabove = 3pt,
  spacebelow = 3pt,
  bodyfont = \itshape,
]{first}
\declaretheorem[numberwithin=section, style=first]{theorem}

\declaretheorem[sibling=theorem, style=first]{corollary}
\declaretheorem[sibling=theorem, style=first]{lemma}
\declaretheorem[sibling=theorem, style=first]{proposition}
\declaretheorem[sibling=theorem, style=first]{question}

\declaretheorem[numberwithin=section, style=first, title=Theorem]{alphatheorem}

\declaretheorem[sibling=alphatheorem, style=first, title=Corollary]{alphacorollary}

\declaretheoremstyle[
  spaceabove = 3pt,
  spacebelow = 3pt,
]{second}
\theoremstyle{second}

\declaretheorem[style=second, sibling=theorem]{definition}
\declaretheorem[style=second, sibling=theorem]{example}

\declaretheorem[style=second, sibling=theorem]{remark}

\crefname{conjecture}{Conjecture}{Conjectures}
\crefname{construction}{Construction}{Constructions}

\crefname{alphatheorem}{Theorem}{Theorems}
\crefname{alphaconjecture}{Conjecture}{Conjectures}
\crefname{alphacorollary}{Corollary}{Corollaries}
\crefname{alphaproposition}{Proposition}{Propositions}
\crefname{alphaquestion}{Question}{Questions}

\crefname{figure}{Figure}{Figures}
\crefname{question}{Question}{Questions}

\makeatletter
\def\gitfootnote{\gdef\@thefnmark{}\@footnotetext}
\makeatother

\makeatletter
\DeclareRobustCommand\widecheck[1]{{\mathpalette\@widecheck{#1}}}
\def\@widecheck#1#2{%
    \setbox\z@\hbox{\m@th$#1#2$}%
    \setbox\tw@\hbox{\m@th$#1%
       \widehat{%
          \vrule\@width\z@\@height\ht\z@
          \vrule\@height\z@\@width\wd\z@}$}%
    \dp\tw@-\ht\z@
    \@tempdima\ht\z@ \advance\@tempdima2\ht\tw@ \divide\@tempdima\thr@@
    \setbox\tw@\hbox{%
       \raise\@tempdima\hbox{\scalebox{1}[-1]{\lower\@tempdima\box
\tw@}}}%
    {\ooalign{\box\tw@ \cr \box\z@}}}
\makeatother

\makeindex

\crefformat{enumi}{#2\textup{#1}#3}

\makeatletter
\newcommand{\acro}[3]{%
  \newacronym{#1}{#2}{#3}%
  \@namedef{#1}{{\acrshort{#1}}\xspace}%
  \@namedef{#2}{{\acrlong{#1}}\xspace}%
}

\acro{qft}{QFT}{Quantum Field Theory}
\acro{tqft}{TQFT}{Topological Quantum Field Theory}
\acro{cft}{CFT}{Conformal Field Theory}
\acro{rcft}{RCFT}{Rational Conformal Field Theory}
\acro{sft}{SFT}{Symplectic Field Theory}
\acro{cohft}{CohFT}{Cohomological Field Theory}
\acro{trft}{TropFT}{Tropical Field Theory}

\acro{syz}{SYZ}{Strominger--Yau--Zaslow}
\acro{tuy}{TUY}{Tsuchiya--Ueno--Yamada}
\acro{wdvv}{WDVV}{Witten--Dijkgraaf--Verlinde--Verlinde}
\acro{wz}{WZ} {Wess--Zumino--Witten}

\acro{glsm}{GLSM}{Gauged Linear Sigma Model}
\newcommand   \graph {\gamma}          %
\DeclareMathOperator\SU{SU}                   %
\DeclareMathOperator \Jac    {Jac}              %
\DeclareMathOperator \Pic    {Pic}              %
\DeclareMathOperator \moduli {M}                %

\DeclareMathOperator\NP{NP}
\DeclareMathOperator\Supp{Supp}

\def\Z{\mathbf{Z}}
\def\R{\mathbf{R}}
\def\C{\mathbf{C}}
\DeclareMathOperator\Ker{ker}
\DeclareMathOperator\Coker{coker}

\mathchardef\mhyphen="2D
\newcommand\dash{\nobreakdash-\hspace{0pt}}

\newcommand \dd    {\ensuremath{\mathrm{d}}}      %

\DeclareMathOperator\Aut{Aut}            %

\DeclareMathOperator\HH{H}

\DeclareMathOperator\Spec{Spec}          %

\newcommand \bN {{\ensuremath{\mathbb{N}}}}

\newcommand \bT {{\ensuremath{\mathbb{T}}}}

\newcommand\isomto{\stackrel{\sim}{\smash{\rightarrow}\rule{0pt}{0.4ex}}}

\usepackage{xpatch}

\makeatletter
\patchcmd\blx@bblinput{\blx@blxinit}
                      {\blx@blxinit
                      }{}{\fail}
\makeatother

\usepackage{xpatch}

\makeatletter
\patchcmd\blx@bblinput{\blx@blxinit}
                      {\blx@blxinit
                      }{}{\fail}
\makeatother

\usepackage{xpatch}

\makeatletter
\patchcmd\blx@bblinput{\blx@blxinit}
                      {\blx@blxinit
                      }{}{\fail}
\makeatother

\usepackage{xpatch}

\makeatletter
\patchcmd\blx@bblinput{\blx@blxinit}
                      {\blx@blxinit
                      }{}{\fail}
\makeatother

\usepackage{xpatch}

\makeatletter
\patchcmd\blx@bblinput{\blx@blxinit}
                      {\blx@blxinit
                      }{}{\fail}
\makeatother

\usepackage{xpatch}

\makeatletter
\patchcmd\blx@bblinput{\blx@blxinit}
                      {\blx@blxinit
                      }{}{\fail}
\makeatother

\usepackage{xpatch}

\makeatletter
\patchcmd\blx@bblinput{\blx@blxinit}
                      {\blx@blxinit
                      }{}{\fail}
\makeatother

\usepackage[margin=1.2in]{geometry}

\title{Can one hear the shape of a lattice random walk?}
\author{Pieter Belmans \and Sergey Galkin \and Swarnava Mukhopadhyay}

\begin{document}

\maketitle

\begin{abstract}
We construct distinct high-dimensional mean-zero finite range lattice random walks
having pairwise-equal return probabilities for all step counts.
The same examples provide pairwise-distinct shapes of discretizations of the standard Laplacian
with pairwise-equal density state functions.

The main contribution is a reconstruction theorem:
to a colored trivalent graph one associates a quantum Clebsch--Gordan polytope,
and this association is a full functor,
in particular from the polytope one can uniquely recover the original graph.
These polytopes appear as moment polytopes of toric degenerations of character varieties (moduli spaces of rank-2 bundles on curves),
yielding a combinatorial non-abelian Torelli theorem.
In symplectic geometry, the reconstruction theorem implies that
monotone Lagrangian tori on odd character varieties associated with these degenerations are pairwise non-Hamiltonian isotopic.
These results, and some of the applications,
arose from the study of mirror symmetry for moduli spaces of vector bundles,
and of the related Laurent phenomenon for mutations of graph potentials.
\end{abstract}

\section{Introduction}
The question
\begin{quote}
  \emph{Can one hear the shape of a drum?}
\end{quote}
was popularized by Mark Kac in \cite{MR0201237}
and has its origins in physics.
The problem asks whether a sequence of frequencies (eigenvalues of a Laplacian)
obtained from a vibrating drum
uniquely determines the shape of the drum.
Earlier, Hermann Weyl answered a similar question about frequencies
determining the area of the drum positively.

However, the question of whether one can hear the shape of the drum has been answered negatively in pioneering work by Milnor \cite{MR0162204},
and subsequent works are surveyed in \cite{MR1136137}.
For discrete graphs, the answer is also generally negative; for instance, almost every tree has a cospectral mate \cite{MR0384582}.
Alternatively, one can think of these questions as \emph{reconstruction theorems},
namely given a sequence of measurable quantities of physical significance,
can we reconstruct the original quality or quantity, such as a shape?

In this paper, we examine a \emph{discrete} version of the above questions
for certain discretizations of Laplacian operators in the set-up of random walks,
using Laurent polynomials, graphs, and polytopes.

\paragraph{The many avatars of a Laurent polynomial}
A Laurent polynomial can be used to encode
\begin{itemize}[noitemsep]
  \item a finite range random walk on a~$d$-dimensional lattice~$L = \Z^d$;
  \item a linear difference operator;
  \item a character of a representation;
  \item the generating function for a collection of circles on a torus,
    such as Floer potentials in symplectic topology; or
  \item an element of the group algebra~$k[L]$.
\end{itemize}
More generally,
it encodes a finitely supported function (or measure, or distribution),
denoted
\begin{equation}
  \label{equation:p}
  p\colon L \to k,
\end{equation}
for some value set~$k$, which need not even be a ring.
This distribution~$p$ can equivalently be encoded
either by an element~$W = \sum_{l\in L} p(l) [l]$,
or by a \emph{periodic characteristic function}
\begin{equation}
  \phi\colon \R^d/2\pi L \rightarrow \mathbb{C}:\phi(\theta) = \sum_{l\in L} p_l \exp(\mathrm{i}\langle l,\theta\rangle),
\end{equation}
or by \emph{an exponential moment-generating function}
\begin{equation}
  \label{equation:M}
  M\colon\R^d \rightarrow \R:M(u) = \phi(-iu).
\end{equation}
We will be mainly concerned with the case of a finitely supported distribution~$p$,
in which case the \emph{factorial moment-generating function}
\begin{equation}
  \label{equation:W}
  W\colon({\mathbb{C}^{\times}})^d \rightarrow \mathbb{C}:W(z) = \sum_{l\in L} p_l z^l
\end{equation}
can be evaluated at any~$d$-tuple of non-zero complex numbers~$z = (z_1,\dots,z_d)$.
Here~$({{\mathbb{C}^{\times}}})^d=\operatorname{Spec}\mathbb{C}[L]$.

The value set~$k$ can be binary~$\{0,1\}$,
but it can also be the integers~$\Z$,
probability values~$[0,1] \subset \R$,
or complex numbers~$\C$.
Sometimes a lattice~$L$ comes with extra structure, such as a choice of basis~$e_1,\ldots,e_d$,
or an inner product, but often it has no other than the structure
of a (free abelian) group.

We will consider a class of Laurent polynomials,
introduced by the authors in \cite{gp-tqft},
named \emph{graph potentials}
and establish certain reconstruction results for them.
These polynomials are part of a framework for investigating
mirror symmetry for moduli spaces of vector bundles on Riemann surfaces \cite{gp-sympl}.

\paragraph{Invariants of Laurent polynomials}
Recall that the support~$\Supp(p)$ of a Laurent polynomial~$p$
is the finite set of vectors~$l\in L$ such that~$p(l) \neq 0$,
and the \emph{Newton polytope}~$\NP(p)$ is the convex envelope
of~$\Supp(p)$ in the real affine space~$L_{\R} := L \otimes \R = \R^d$.
It is a higher-dimensional analogue of the degree of a univariate polynomial.
Given a linear injection~$b\colon L \to L'$ that maps~$p$ to~$p'$,
the Newton polytope of~$p$ is mapped to the Newton polytope of~$p'$
under the isomorphism~$L_\R \cong L'_\R$.

Another invariant is the sequence 
of \emph{probabilities of return to the origin},
in algebro-geometric context known as a \emph{period sequence}
\begin{definition}
  The period sequence of a distribution~$p$
  is the sequence of numbers~$\Pi_p = \{a_m\}_{m\in \mathbb{N}}$,
  where
  \begin{equation}
    \label{equation:classperiod}
    a_m = \frac {1}{(2\pi )^d} \int_{[-\pi,\pi]^d} \phi(\theta)^m \dd\theta
  \end{equation}
  is the constant term of the~$m$th power of~$W$.
\end{definition}
In the setting of random walks it corresponds to the probability~$\mathbb{P}(S_m = S_0)$
to be in the same place after~$m$ hops.
Here~$\dd\theta$ is the Lebesgue measure on a~$d$-dimensional torus.
We will also denote it by~$\Pi_W$,
when the emphasis on the input data is \eqref{equation:W}
rather than \eqref{equation:p}.

One may also consider other popular integral transforms, such as
\begin{equation}
  K_W(t) = \int_{u\in\R^d} \exp(-t M(u)) \dd u
\end{equation}
resp.
\begin{equation}
  \zeta_W(s) = \int_{u\in\R^d} M(u)^{-s} \dd u.
\end{equation}
Here~$\dd u$ is the Lebesgue measure on~$\R^d$.
They are typically convergent for~$\Re(t)>0$, resp.~$\Re(s)>1$,
if a neighbourhood of the origin is contained inside the Newton polytope,
which we are going to tacitly assume.
Note that~$K_W$ and~$\zeta_W$ are determined by a measure~$M_* \mu_{\R^d}$ on~$\R$,
that is obtained as the push-forward of the Lebesgue measure on~$\R^d$ with respect to \eqref{equation:M},
and are respectively Laplace and Mellin transforms of the respective density functions.

\paragraph{Mathematical and physical interpretations}
In certain applications the invariants~$\Pi$,~$K_W(t)$,~$\zeta_W(s)$
are either mathematically more accessible or physically more meaningful.
Thus one might have the \emph{hope} that the finite distribution~$p$
(or equivalently, the Laurent polynomial~$W$, or some other incarnation)
could be recovered from one of these generating integral functions,
hence the analogy with ``hearing the shape of a drum''.

For example, in symplectic topology the numbers~$a_m$ might count
holomorphic spheres and are accessible also by algebraic methods,
whereas the numbers~$p_l$ count holomorphic discs whose formulation
so far usually requires analytic tools.
In Bloch--Floquet theory,
the generating function for~$a_m$ is a density-of-states function,
which is to some extent experimentally observable,
whereas the distribution~$p_l$ is more about modelling a discrete Laplacian.

\paragraph{Random walks and mirror symmetry}
The bridge between lattice random walks and mirror symmetry for Fano manifolds
was set up in \cite{galkin-split-notes}.
In the recent works \cite{2405.16979,hu-ke-lu-wip}
probabilistic estimates for random walks, respectively the local central limit theorem
and exponential concentration, are used to obtain results on the mirror symmetry side,
namely on the asymptotics of quantum periods of Fano manifolds.
The present paper goes in the opposite direction:
we use techniques from mirror symmetry,
namely the conifold-point analysis of \cite{1404.7388,galkin-split-notes}
and the properties of graph potentials from \cite{gp-tqft},
as predicted by mirror symmetry,
to obtain results on lattice random walks.

\medskip

In the one-dimensional case one may use Cauchy's residue theorem,
Lagrange's inversion formula, and other tools, such as Galois theory.
Michael Larsen recently has proven \cite{2412.19762}
that the answer to the question ``\emph{Can one hear the shape of a random walk}?'' is affirmative up to two minor ambiguities,
which also holds in higher dimensions.\footnote{We added the word 'lattice' to the title of our work to distinguish it from that of Larsen.}

\paragraph{Setting the stage}
The \emph{first ambiguity} is that the probabilities~$a_m$ do not depend on the choice
of the lattice~$L\subset \R^d$ and stay the same if the lattice~$L$
is replaced by an overlattice~$L'\supset L$.
Recall that 
\begin{definition}
 A random walk is called \emph{irreducible}
  if the support of~$p$ is a generating set for the lattice~$L$ as a monoid,
that is for every lattice point $l$ there exists $n$ such that
the probability to reach $l$ in $n$ steps is non-zero.
\end{definition}
A simple resolution to the first ambiguity is to consider only irreducible lattice random walks.
With any random walk one can associate an irreducible one, by restricting our attention
to the sublattice $L'$ generated by its support, provided zero lies in the interior of the Newton polytope.
The latter condition is automatic for mean-zero random walks.
Geometrically if $\mathrm{T}_L = \Spec \C[L] \simeq (\C^\times)^n$ is the torus,
such that elements $l$ of lattice $L$ correspond to characters $z^l\colon\mathrm{T}_L \to \C^\times$,
then the inclusion $L' \to L$ of an index-$k$ sublattice corresponds to an unramified
covering $\pi\colon\mathrm{T}_L \to\mathrm{T}_{L'}$ and the potential $W\colon\mathrm{T}_L \to \C$
can be obtained as a composition of the covering $T_L \to T_{L'}$
with the potential $W'\colon\mathrm{T}_{L'} \to \C$, where $W'$ is given by $p$
considered as a distribution on a sublattice $L'$. In other words, $W = \pi^* W'$ is a pullback of $W'$
obtained by a monomial \emph{change of coordinates} $\pi$. In terms of coordinates $u$ or $\theta$
the transformation $\pi$ is linear.

For example, the non-irreducible walk
on a two-dimensional lattice given by $W = (x+1/x)(y+1/y)$
is a pullback of the simple random walk $W' = z + 1/z + w + 1/w$
via a monomial map $(x,y) \mapsto (z,w) = (xy, x/y)$,
with the simple random walk being mirror dual to $\mathbb{P}^1 \times \mathbb{P}^1$,
and the pullback being mirror dual to an orbifold toric surface, obtained as a quotient $(\mathbb{P}^1 \times \mathbb{P}^1) / \sigma$
by an involution $\sigma$.

Floer potentials of Fano manifolds are expected to be irreducible
(see Property $\mathcal{O}$ and Conjecture $\mathcal{O}$ 
in \cite{1404.7388,galkin-split-notes,MR3536989,MR4384381})
in contrast to known examples of non-irreducible potentials for some Fano orbifolds in op.~cit.

The \emph{second ambiguity} is similar, but slightly more subtle: the Lebesgue measures
$\dd u$ on~$\R^d$ and~$\dd\theta$ on the~$d$-dimensional torus are translation-invariant,
so the change of coordinates~$u \mapsto u + \tau$ for~$\tau\in\R^d$
transforms the potential~$M(u)$ into the potential~$M_\tau(u) = M(u+\tau)$ with equivalent
integral invariants, but the transformed distribution has coefficients
equal to~$p_\tau(l) = \exp(\langle\tau, l\rangle) p(l)$, and thus is clearly distinct
from~$p$. One has~$M_\tau(0) = M(\tau)$, so unless~$M(\tau)=M(0)$, the new distribution
stops being probabilistic. One can either rescale it and consider~$p_\tau/M(\tau)$,
or study in detail the question when~$M(\tau) = M(0)$, or more generally
$M(\tau_1) = M(\tau_2)$.
In the running example, choosing two points
$(5/2,10)$ and $(528/35,112/165)$
on a circle $x + 1/x + y + 1/y = 13$ in $\mathbb{R}_{>0}^2$
we obtain two random walks with potentials
$\frac{2}{65} x + \frac{5}{26} x^{-1} + \frac{1}{130} y + \frac{10}{13} y^{-1}$
and
$\frac{35}{6864} x + \frac{528}{455} x^{-1} - \frac{165}{1456} y - \frac{112}{2145} y^{-1}$
respectively.

Observe that the equality~$M(\tau_1) = M(\tau_2)$ holds precisely when~$\tau_1$
and~$\tau_2$ lie on the same level set of~$M$,
so all of this is essentially a question about the study of
the level sets of the functions:
\begin{equation}
  M(u)\colon \R^d \to \R,
  \quad
  W(z)\colon (\C^\times)^d \to \C,
  \quad \mbox{and} \quad
  \widetilde{M}(u):= W(\exp(u))\colon \C^d \to \C.
\end{equation}
Note that in Bloch--Floquet theory,
the level sets of the functions~$M(u)$,~$W(z)$ and~$\widetilde{M}(u)$ are respectively real algebraic,
complex algebraic,
and complex analytic varieties, known as \emph{Fermi varieties}.
Concretely, for the Laurent potential~$W$ we set
\begin{equation}
  \label{equation:fermi-bloch-intro}
  F_\lambda(W):=W^{-1}(\lambda)=\{z\in(\C^\times)^d\mid W(z)=\lambda\},
  \qquad
  B(W):=\{(z,\lambda)\in(\C^\times)^d\times\C\mid W(z)=\lambda\},
\end{equation}
the \emph{Fermi variety} at energy~$\lambda$ and the \emph{Bloch variety} of~$W$.
The latter is the graph of~$W$, and the former is its slice
$F_\lambda(W)\times\{\lambda\}=B(W)\cap\bigl((\C^\times)^d\times\{\lambda\}\bigr)$;
in particular~$B(W)$ is the union of all Fermi varieties and determines the whole family~$\{F_\lambda(W)\}_{\lambda\in\C}$. 
	The operator-theoretic incarnation of these varieties, due to Gieseker--Kn\"orrer--Trubowitz, is recalled in \cref{section:fermi}. In the operator literature the Bloch variety~$B(q)$ is generally a determinant / Floquet dispersion hypersurface with respect to a periodic Schr\"odinger operator with respect to a lattice at a spectral parameter $q$. In our Laurent-polynomial model there is no separate field potential (it is constant); the graph potential encodes the hopping/discrete Laplacian symbol.  We also refer to Fillman--Liu--Matos \cite{MR4474842} and Faust--Lopez Garcia \cite{MR4859583} for a discussion on Bloch and Fermi varieties from the perspective of periodic graph operators.

The well-known positive-definiteness of covariance as in \cite{1404.7388,galkin-split-notes}
implies the strict convexity of the function~$M$,
which in turn by Morse theory implies that its non-empty level sets are~$(d-1)$-dimensional spheres,
with the notable exception of the global minimum, attained at some point~$\tau_0$
known as \emph{the conifold point} in mirror symmetry
\cite{1404.7388,galkin-split-notes,MR4384381,2405.16979}.

So one reasonable way to fix the second ambiguity
is to demand~$u=0$ to be the global minimum of the function~$M$,
which is equivalent to the condition~$M'(0) = 0$.
For random walks this condition corresponds to the \emph{mean-zero} property,
and is also often tacitly assumed in the literature.
In the probabilistic language, this is equivalent to~$\mathbb{E}[S_1]=0$.
Without this condition,
the period sequence~$(a_m)_m$ has exponentially decaying asymptotics,
but with this condition the decay is only polynomial.

\paragraph{Resolving ambiguities}
A more radical solution to the two obvious ambiguities is
is to ignore them
by reformulating the hope in a more robust way:

\textbf{Question}\quad
\emph{Given a period sequence~$\{a_n\}_{n\in \mathbb{N}}$
and/or other integral transforms such as~$K_W(t)$ or~$\zeta_W(s)$,
can one recover the Newton polytope~$\NP(p) \subset \R^d$
at least as a convex polytope up to affine linear transformations?}

Before we address this question,
we describe another perspective on the objects involved in this question,
allowing us to further set up notation we will need in our main result.

\paragraph{Discrete Laplacians and density of states}
A linear difference operator was one of the avatars of~$p$ listed at the outset;
under the mean-zero condition discussed above it becomes a discrete analogue of the Laplacian,
whose spectrum the period sequence will turn out to encode.
Consider the difference operator~$\Delta_p$ associated to a distribution~$p$ given by the formula:
\begin{equation}
  \label{eqn:difference}
  (\Delta_p q) (l) = \sum_{l'\in L} p(l') (q(l+l') - q(l)),
\end{equation}
where~$q \in \ell^2(L)$.
Under the mean-zero and isotropic covariance assumptions used in this paper,
$\Delta_p$ is a discretization of the standard Laplacian.

Recall that the Laplacian~$\Delta$ of a compact Riemannian manifold has a discrete spectrum~$\{\lambda_n\}_{n\in \mathbb{N}}$,
and the \emph{density of states}~$\rho$ of a Laplacian of a compact manifold is a spectral distribution given by
\begin{equation}
  \rho(\lambda)=\sum_n \delta(\lambda -\lambda_n).
\end{equation}
Consider the operator~$\mathrm{e}^{-t\Delta }\colon\mathrm{L}^2(M) \rightarrow\mathrm{L}^2(M)$,
and let~$K_t(x,y)$ be the \emph{integral heat kernel}, i.e.,
\begin{equation}
  \mathrm{e}^{-t\Delta}f=\int_M K_t(x,y)f(y)\dd y.
\end{equation}
It is well known that the trace of the operator~$\mathrm{e}^{-t\Delta}$ is the Laplace transform of the density of states, i.e.,
\begin{equation}
  Z(t):=\operatorname{Tr}(\mathrm{e}^{-t\Delta})=\int_{0}^{\infty} \mathrm{e}^{-t\lambda}\rho(\lambda)\dd\lambda.
\end{equation}
In the discrete case of the difference operator~$\Delta_p$ associated to an irreducible, mean-zero random walk,
we can regard the generating function of the period sequence~$\Pi_p$
as the Laplace transform of density of states of the difference operator~$\Delta_p$.

Another variant of the Laplacian, relevant to Bloch--Floquet theory,
is the notion of \emph{periodic Schr\"odinger operators}.
Namely, if~$\Gamma \subset \R^d$ is a lattice and~$H$ is a~$\Gamma$-invariant function in~$\mathrm{L}^2(\R^d/\Gamma)$,
a \emph{periodic Schr\"odinger operator} is an operator of the form~$-\Delta +H$ where~$\Delta$ is the Laplacian on~$\R^d/\Gamma$. In this article we are going to be concerned only with functions $H$ translation-invariant with respect the whole continuous torus (as opposed to a discrete subgroup), i.e. constants.
We refer the reader to \cref{section:fermi} for the precise statements and further details.

We now formulate our main result and discuss the interpretations of our results.

\begin{alphatheorem}\label{theorem:principal}
  Let~$m\geq 3$ be an integer.
  Then there exist~$M=M(m)\geq 2$
  discrete subsets~$S_1,\ldots,S_M$ (where~$\#S_j=8m$)
  of a sphere of squared radius~$3$
  inside a standard Euclidean~$3m$-dimensional lattice~$L = \Z^{3m} \subset \R^{3m}$,
  together with their associated
  Laurent polynomials~$W_j = \sum_{l\in S_j} z^l$ with coefficients in~$\{0,1\}$,
  lattice random walks,
  and periodic Schr\"odinger / difference operators~$\mathrm{D}_{W_j}$ as in \eqref{eqn:difference},
  with the following properties
  satisfied by each single entity
  \begin{enumerate}[label=(\roman*)]
    \item\label{enumerate:convex-independent}
      all sets~$S_j$ are convex independent;
    \item\label{enumerate:principal-zero-mean}
      all random walks are mean-zero, i.e.,~$\sum_{l\in S_j} l = 0$;
    \item\label{enumerate:principal-approximation}
      the difference operators~$D_{W_j}$
      are approximately equal to continuous Laplacian differential operators:
      a Taylor series expansion recovers the Laplacian from the degree-two terms (followed by higher-order terms);
   \end{enumerate}
    and by every pair
   \begin{enumerate}
    \item\label{enumerate:principal-newton-polytope}
      all sets~$S_j$ are pairwise non-affine equivalent,
      in particular the Newton polytopes of~$W_j$ are pairwise non-isomorphic;
    \item\label{enumerate:principal-equal-invariants}
      all periods~$\Pi_{W_j}$, and other integral invariants~$K_{W_j}(t)$,~$\zeta_{W_j}(t)$ are pairwise-equal;
    \item\label{enumerate:principal-fermi}
      for fixed generic~$\lambda\in\mathbf{C}$,
      the Fermi varieties~$F_\lambda(W_j)$ from \eqref{equation:fermi-bloch-intro}
      attached to the different Laurent polynomials~$W_j$ above
      are pairwise birational, but pairwise non-isomorphic as subvarieties of the torus~$(\C^\times)^{3m}$,
      i.e.\ no monomial change of coordinates carries one to another,
      but there is a birational change of coordinates that does.
  \end{enumerate}
\end{alphatheorem}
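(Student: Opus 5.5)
The examples will be graph potentials from \cite{gp-tqft}. Take a trivalent graph~$\graph$ of genus~$g=m+1\ge 4$, with~$2g-2=2m$ vertices and~$3g-3=3m$ edges, and fix a coloring (a subset of vertices of odd parity). The graph potential is~$W_{\graph}=\sum_v W_v$, a Laurent polynomial in one variable per edge, so it lives on~$\Z^{3m}$. Each vertex contributes the four monomials~$x_a^{\pm1}x_b^{\pm1}x_c^{\pm1}$ with an even (or odd, for colored vertices) number of minus signs. This gives~$\#S=4\cdot 2m=8m$ exponent vectors, each in~$\{0,\pm1\}^{3m}$ with exactly three nonzero entries, hence of squared length~$3$.

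Properties~\ref{enumerate:convex-independent}--\ref{enumerate:principal-approximation} are direct checks. Points on a sphere are automatically convex independent, since each is an extreme point of the ball. The four vectors at a vertex sum to zero, so the walk is mean-zero. For the second moment, every edge variable occurs in the potentials of its two endpoints. At each vertex,~$\sum_l l_a l_b$ vanishes for~$a\ne b$ and equals~$4$ for~$a=b$. The covariance is therefore~$8\,\mathrm{Id}$, and the degree-two Taylor term of~$\Delta_{W}$ is a multiple of the standard Laplacian. For~\ref{enumerate:principal-equal-invariants} I would invoke the TQFT property of graph potentials from \cite{gp-tqft}. Elementary mutations (flips of an edge between two vertices) relate the potentials of any two trivalent graphs of the same genus with the same coloring parity. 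These mutations are birational maps of~$(\C^\times)^{3m}$ that preserve the torus volume form~$\prod \dd x_e/x_e$ and carry~$W_{\graph}$ to~$W_{\graph'}$.

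Constant terms of~$W^n$ are residues against that form, so the period sequences agree. The same holds for the real positive parts: the mutation maps~$\R_{>0}^{3m}$ to itself and preserves~$\dd u$ in log coordinates, so the pushforward measures~$M_*\mu$, and hence~$K_W$ and~$\zeta_W$, agree. Because the mutations intertwine the potentials, they restrict to birational maps between the Fermi varieties~$F_\lambda$. This gives the birational half of~\ref{enumerate:principal-fermi}.

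The main obstacle is the distinctness: part~\ref{enumerate:principal-newton-polytope} and the non-isomorphism half of~\ref{enumerate:principal-fermi}. Here I would use the reconstruction theorem announced in the abstract: the quantum Clebsch--Gordan polytope~$\NP(W_{\graph})$ determines the colored graph, and the assignment is a full functor. An affine equivalence~$S_j\to S_k$ must send the origin to the origin, since the origin is the barycenter. It therefore induces a lattice isomorphism of the Newton polytopes, hence an isomorphism of colored graphs. So choosing~$M\ge2$ non-isomorphic trivalent graphs of genus~$m+1$ suffices; for~$m\ge3$ such pairs exist, for instance a graph with a loop versus one without. To prove reconstruction I would look at the facet structure. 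Each vertex~$v$ contributes a tetrahedron, and the faces of the polytope should see the quadruples of vectors belonging to one vertex. One then reads off edges as coordinate directions shared by two such quadruples.

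For the Fermi varieties, a monomial change of coordinates~$\phi$ carrying~$F_\lambda(W_j)$ onto~$F_\lambda(W_k)$ for generic~$\lambda$ would give~$\phi^*W_k-\lambda=c\,(W_j-\lambda)$ up to a monomial factor. Comparing constant terms forces~$c=1$ and the monomial factor to be trivial, so~$\phi^*W_k=W_j$. The supports are then lattice-isomorphic, and reconstruction finishes the argument as before.
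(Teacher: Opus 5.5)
You follow the paper's route: graph potentials of genus $m+1$, direct checks for \ref{enumerate:convex-independent}--\ref{enumerate:principal-approximation}, Hatcher--Thurston mutations for part~\ref{enumerate:principal-equal-invariants} and the birational half of part~\ref{enumerate:principal-fermi}, and the reconstruction theorem for part~\ref{enumerate:principal-newton-polytope} and the rest of part~\ref{enumerate:principal-fermi}. Your second-moment matrix $8\,\mathrm{Id}$ is more explicit than what the paper gives. Two points fail as written, however.

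\textbf{The choice of graphs.} The examples cannot be arbitrary trivalent graphs, and in particular not ones with a loop. For the even coloring, a vertex carrying a loop $a$ and a further edge $b$ contributes $x_a^2x_b+2x_b^{-1}+x_a^{-2}x_b$. This gives a vector of squared length $5$, a coefficient $2\notin\{0,1\}$, and fewer than $8m$ points. The reconstruction theorem is also only proved for graphs without loops or multiple edges. The graphs must therefore be simple, and this is exactly where the hypothesis $m\geq3$ comes from. On $4$ vertices $K_4$ is the only simple cubic graph. For every $m\geq3$, the planar prism $C_m\times K_2$ and the non-planar M\"obius ladder on $2m$ vertices give two connected, non-isomorphic examples.

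\textbf{Your sketch of the reconstruction theorem.} This is the real content of part~\ref{enumerate:principal-newton-polytope}, and the sketch does not work. The tetrahedron $\operatorname{conv}\{p(v,s)\}_s$ has barycenter $0$, an interior point of $P(\graph)$, so it is not a face. Not even its edges are faces: the midpoint $\pm e(i)$ of $p(v,s)$ and $p(v,s')$ is also the midpoint of two rays at the other endpoint of $i$. Moreover, a facet contains at least $3m$ rays but at most three from each quadruple, since the four rays at $v$ sum to $0$. So facets always mix many quadruples.

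The missing idea is to use midpoints rather than faces. Affine isomorphisms preserve rays and midpoints, and the points $\pm e(i)$ are exactly the semi-sums of two rays that arise in two different ways (\cref{Lemma-exactly-twice}). The proof is a parity argument on a relation $p+q=p'+q'$. Coordinate $i$ forces both endpoints of $i$ to occur with the same parity. With only four terms and components of at least $5$ vertices, every vertex therefore occurs an even number of times. The only remaining bad case, $p(v,x)-p(v,y)=p(v',z)-p(v',w)$ with $v\neq v'$, would require a double edge. Once the $\pm e(i)$ are known, the graph's vertices are the classes of rays with equal support, and its edges are the shared coordinates.

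\textbf{A minor point in part~\ref{enumerate:principal-fermi}.} Comparing constant terms alone does not exclude a monomial factor $z^{\mu}$ with $-\mu\in S_j$. You do not need to remove it: an affine isomorphism of the Newton polytopes already suffices for reconstruction.
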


An informal way to describe the above theorem is the slogan:
{\em One cannot hear the shape of a high-dimensional lattice random walk},
thus answering the question of the title negatively.
We refer the reader to \cref{section:random-walks} for
interpretations of \cref{theorem:principal} in the context of random walks.

Point \cref{enumerate:principal-newton-polytope} is a corollary of \cref{theorem:main},
which we will discuss shortly,
and which is proved in \cref{section:reconstruction}.
Point \cref{enumerate:principal-equal-invariants} is proven in \cref{section:gp}.
Point
\cref{enumerate:principal-fermi}
follow from the discussion in \cref{section:fermi}.

\paragraph{Graph and polytopes}
The main technical ingredient of this article that allows us to prove
\cref{theorem:principal} is the study
of \emph{quantum Clebsch--Gordan polytopes}
$P(\graph) \subset \R^{E(\graph)}$,
a class of convex lattice polytopes
associated with graphs~$\graph$
with edge set~$E$ and vertex set~$V$,
and the proof of \cref{theorem:main},
which gives an algorithmic recovery of graphs from their polytopes.

The polytopes~$P(\graph)$ and their integer points
turn out to be of central interest in mathematics,
and already appeared in various contexts at least since the 1980s.
For example:
\begin{itemize}[noitemsep]
  \item They define the multiplicative or quantum Horn equalities
    in the works of Agnihotri--Woodward and Belkale \cite{MR1671192,MR2373354}.
  \item They are isomorphic to the images of
    the Goldman--Jeffrey--Weitsman integrable systems \cite{MR1204322,MR1279216}
    and the Newton--Okounkov bodies
    arising from conformal blocks in the work of Manon \cite{MR2928457,MR3861806}.
  \item Integer points in their~$k$-dilations are in bijection with
    Bohr--Sommerfeld Lagrangian tori on
    the moduli space of~$\SU(2)$-connections in the symplectic set-up \cite{MR1204322,MR1279216}.
  \item The number~$n_k(\graph)$ of such points is a polynomial in~$k$,
    the so-called Ehrhart polynomial. It coincides with the Hilbert polynomial of
    the moduli space of rank two bundles in the algebro-geometric setup.
    It can be computed by the Verlinde formula \cite{MR1423621},
    in particular it depends only on the genus~$g$ and is equal to the dimension
    of the respective space of conformal blocks of level~$k$
    (or non-abelian theta functions).
    The location of the roots of these polynomials
    was studied by the authors in \cite{MR4399121}.
  \item They appear in works of Kohno and Tsuchiya--Ueno--Yamada \cite{MR1312173,MR1048605} on
    topological invariants from the Knizhnik--Zamolodchikov connection;
    these might be one of the earliest modern references.
\end{itemize}

The following theorem is the main combinatorial reconstruction result,
and it is proven in \cref{section:reconstruction}.

\begin{alphatheorem}
\label{theorem:main}
A graph~$\graph$ can be algorithmically reconstructed
from its quantum Clebsch--Gordan polytope~$P(\graph)$.
In particular,
the polytopes \(P(\graph)\) and \(P(\graph')\) can be mapped to each other
by a pair of (real) affine transformations
  if and only if
the 
graphs \(\graph\) and \(\graph'\) are isomorphic.
\end{alphatheorem}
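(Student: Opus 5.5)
The plan is to read the graph off the facet structure of $P(\graph)$, using only data that is invariant under affine transformations. Recall the definition: $P(\graph)\subset\R^{E(\graph)}$ is cut out, vertex by vertex, by the quantum Clebsch--Gordan inequalities. For a vertex $v$ with incident (half-)edges $a,b,c$ these are the three triangle inequalities and the level inequality. In affine form they read $\ell_{v,a}=x_b+x_c-x_a\ge 0$, $\ell_{v,b}\ge 0$, $\ell_{v,c}\ge 0$, and $\ell_{v,0}=k-x_a-x_b-x_c\ge 0$, where $k$ is the level of the normalization used in the paper. If $\graph$ is colored, some of these are replaced by their images under the reflection $x\mapsto k-x$ on the relevant coordinates. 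I take for granted that this only changes the linear parts by signs, so the argument below is unchanged. So $P(\graph)$ comes with $4|V|$ distinguished affine functionals.

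\textbf{Step 1: irredundancy.} I will show that each of the $4|V|$ inequalities defines a facet, and that distinct inequalities define distinct facets. The method is to exhibit, for each inequality, a point of $P(\graph)$ where it alone is tight. The natural choice is to perturb the interior point $x_e=k/3$ for all $e$ toward the chosen hyperplane. The degenerate cases need separate care: loops, where $a=b$, so $\ell_{v,c}=x_c\ge0$ and $\ell_{v,a}=x_c\ge 0$ coincide and the count of facets at $v$ drops; and multiple edges. For these I will list explicitly which functionals coincide. Once this is done, the facets of $P$, together with their defining affine functionals up to positive scaling, are intrinsic. An affine isomorphism $P(\graph)\to P(\graph')$ therefore transports these functionals bijectively, up to scalars, and preserves all affine linear relations among them.

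\textbf{Step 2: recovering the vertices.} For each vertex there is the affine relation $\ell_{v,a}+\ell_{v,b}+\ell_{v,c}+\ell_{v,0}=2k-\ldots$, which after simplification says the four linear parts sum to zero with the constants summing to $k$. This is a $4$-term affine dependency. I will show that these are the only minimal affine dependencies among the facet functionals with at most $4$ terms. The argument is to look at supports in $\R^{E}$. Each functional is supported on the at most three edges at one vertex, and the coefficients are $\pm1$. A small dependency therefore has to cancel each edge coordinate, and tracing this cancellation along the edges of $\graph$ forces all terms to lie at a single vertex, except in short degenerate configurations such as double edges or loops, which I will treat by hand. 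The vertex set $V$ is then recovered as the set of these $4$-element clusters of facets; loops are identified by their coincident facets from Step 1.

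\textbf{Step 3: recovering the edges, and the main obstacle.} For each vertex $v$, let $U_v$ be the span of the linear parts of its facet functionals. Then $U_v$ is the span of $x_a,x_b,x_c$, since for example $\ell_{v,a}+\ell_{v,b}=2x_c$. Consequently $\dim(U_v\cap U_w)$ equals the number of edges joining $v$ and $w$, and $\dim U_v$ detects loops. Both quantities are invariant under linear changes of coordinates, so the multigraph $\graph$ is reconstructed, and the coloring is recovered from the sign pattern of the cluster relative to the shared coordinates. This procedure is clearly algorithmic: compute the facets, then the small dependencies, then intersections of spans. It gives the ``if and only if'' of \cref{theorem:main}, the converse direction being trivial. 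I expect the main obstacle to be the uniqueness claim in Step 2, together with the irredundancy in Step 1, in the presence of loops and multi-edges. There the supports are smaller and accidental coincidences or short dependencies could a priori occur. I would first try a direct case analysis over the finitely many local configurations: a loop, a double edge, a theta-subgraph, and a dumbbell. After that, an induction on $|V|$ would cover the generic part.
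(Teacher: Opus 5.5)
Your Step~2 is false, and the failure is generic, not confined to loops or multiple edges. Let $v$ (edges $a,b,c$) and $w$ (edges $c,d,e$) be adjacent vertices of a simple graph. Your own identity gives $\ell_{v,a}+\ell_{v,b}=2x_c=\ell_{w,d}+\ell_{w,e}$. Similarly, $\ell_{v,c}+\ell_{v,0}=k-2x_c=\ell_{w,c}+\ell_{w,0}$. Hence
\[
  \ell_{v,a}+\ell_{v,b}-\ell_{w,d}-\ell_{w,e}=0,
  \qquad
  \ell_{v,a}+\ell_{v,b}+\ell_{w,c}+\ell_{w,0}=k,
\]
and there are two more of the same kind. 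These are four-term dependencies spread over two vertices. They are minimal: any three of the four linear parts are independent, because one of them contains a coordinate ($x_d$ or $x_a$, say) that no other term in the triple contains.

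The second relation has exactly the shape of your vertex relation: positive coefficients, linear parts summing to zero, constants summing to $k$. So no sign or positivity condition separates it from the vertex relations. Such relations exist along every edge of every graph, including the simple graphs with at least five vertices per component to which the theorem is restricted. Your heuristic that cancellation along edges forces a single vertex overlooks that two functionals at $v$ can cancel every coordinate except that of the shared edge. Consequently the vertex clusters are not characterized as ``the minimal dependencies with at most four terms''. Step~3, which presupposes the clusters, then has no input.

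The paper turns exactly these relations into its main tool. First, in the paper $P(\graph)$ is the convex hull of the rays $p(v,s)$, and the polytope you describe is its polar dual, so your facet functionals correspond to the rays. This is harmless: by \cref{lemma:center} any affine isomorphism of such polytopes fixes the vertex centroid $0$, so it is linear and dualizes. In ray language your cross relations read $p(v,x)+p(v,y)=2e(c)=p(w,z)+p(w,u)$.

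\Cref{Lemma-exactly-twice} shows that a lattice point that is a semi-sum of two rays in two different ways must be some $\pm e(i)$, and then arises in exactly two ways, one pair at each endpoint of $i$. Its proof uses a parity argument, which needs every component to have at least five vertices, together with the absence of multiple edges. This recovers the edge directions first. The vertices are then the classes of rays with equal support in that basis.

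To salvage your route you would have to classify all four-term circuits. For these graphs they are the vertex quadruples and, for each edge, the four two-vertex circuits above. You would then need an invariant separating the two kinds. For instance, a pair of facets at one vertex lies in three four-term circuits, while a pair taken from two adjacent vertices lies in only one.

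A smaller slip in Step~1: $x_e=k/3$ is not an interior point, since it makes every level inequality $\ell_{v,0}\ge 0$ tight. The center is $x_e=k/4$.
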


\paragraph{Polytopes associated to colored graphs}
For geometric applications we need a colored variation on
the construction of quantum Clebsch--Gordan polytopes.
In \cref{section:colorings}, we define colored graphs
and isomorphisms between them, which in particular
allows us to upgrade \cref{theorem:main}
to a description of all isomorphism groups between the respective polytopes:
\begin{alphatheorem}[\cref{theorem:fullness}]
\label{theorem:colored-reconstruction}
Any affine linear isomorphism
between (colored) quantum Clebsch--Gordan polytopes \(P(\graph)\) and \(P(\graph')\)
is induced by a uniquely defined isomorphism of colored graphs.
\end{alphatheorem}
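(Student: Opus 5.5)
The plan is to reduce an arbitrary affine isomorphism $\varphi\colon P(\graph)\to P(\graph')$ to a permutation of coordinates composed with a composite of ``color flips'', and then read off a colored graph isomorphism from these data.

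Recall that $P(\graph)\subset\R^{E(\graph)}$ is cut out, for each vertex $v$ with incident edges $e_1,e_2,e_3$, by the quantum Clebsch--Gordan inequalities
\[
  x_{e_1}\le x_{e_2}+x_{e_3},\quad x_{e_2}\le x_{e_1}+x_{e_3},\quad x_{e_3}\le x_{e_1}+x_{e_2},\quad x_{e_1}+x_{e_2}+x_{e_3}\le 2.
\]
In the colored setting some of these four inequalities are replaced by their images under the reflection $x_e\mapsto 1-x_e$ applied to the colored edges, which permutes the four inequalities at $v$.

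The first step is to identify the facets intrinsically. Each vertex contributes four facets, so $P(\graph)$ has $4|V|$ facets. An affine isomorphism maps facets to facets and preserves the combinatorics of the face lattice. Using the reconstruction of \cref{theorem:main}, I would recall the intrinsic characterization established there: a quadruple of facets comes from a single trivalent vertex if and only if the corresponding affine functionals satisfy a linear relation. At a vertex the four inequalities sum to the constant $2$, so the normals of the four facets at $v$ satisfy a single linear dependence of a prescribed shape. Two edges are then the ``same'' edge when the corresponding coordinate directions are shared by two such quadruples.

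So I would show that $\varphi$ sends vertex-quadruples to vertex-quadruples. This gives a bijection $V(\graph)\to V(\graph')$. I would check that two quadruples share an edge exactly when their normal spans intersect in a line parallel to a coordinate functional, which gives a bijection $E(\graph)\to E(\graph')$ compatible with incidence. This is the content of \cref{theorem:main}, and I take it as given.

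The second step is to pin down the linear part. Once the edge bijection $\sigma$ is known, the linear part of $\varphi$ sends the span of the normals at $v$ to the span of the normals at $\sigma(v)$. These spans are the three coordinate functionals $x_{e_i}$. Hence $d\varphi^{*}$ maps each coordinate functional $x_{\sigma(e)}$ to a multiple of $x_e$. Comparing the explicit normals $(\pm1,\pm1,\pm1)$ at a vertex, the multiple must be $\pm1$. The affine part then forces $x_{\sigma(e)}\circ\varphi\in\{x_e,\;1-x_e\}$. The sign pattern records which edges are flipped, and flipping at a vertex must carry the colored inequality system at $v$ to that at $\sigma(v)$. This is precisely the datum of an isomorphism of colored graphs, as defined in \cref{section:colorings}. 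Conversely, every isomorphism of colored graphs defines such a $\varphi$, so the correspondence is surjective.

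The third step is uniqueness. The isomorphism is determined by $\sigma$ together with the flip set, and both are read off from $\varphi$. Conversely, $\varphi$ is determined by them, because $P(\graph)$ is full-dimensional in $\R^{E}$; I would check this via an interior point such as $x\equiv 1/2$, which satisfies all inequalities strictly. Hence the inducing colored isomorphism is unique.

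I expect the main obstacle to lie in the first step for small or degenerate graphs, namely those with loops, multiple edges, or genus~$2$. In these cases the four inequalities at a vertex may coincide or become redundant, so that facets no longer biject with (vertex, inequality) pairs, and the linear-relation test could produce spurious quadruples. For these cases I would first try to remove or identify the redundant inequalities explicitly. If that fails, I would check the finitely many low-genus cases by hand, and use the definition of colored isomorphism to absorb the extra symmetries, for example the swap of the two ends of a loop.
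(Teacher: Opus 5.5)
\textbf{Gap in the first step.} The intrinsic characterization you attribute to \cref{theorem:main} says that four facets come from one vertex if and only if their affine functionals satisfy a linear relation, with the four at $v$ summing to the constant $2$. The paper does not prove this, and it is false for every graph in scope, not only for degenerate ones. In your normalization, let $v$ have edges $1,2,3$, and let $v'$ be joined to $v$ by edge $3$, with further edges $4,5$. Then
\[
  (x_2+x_3-x_1)+(x_1+x_3-x_2)=2x_3,\qquad (x_4+x_5-x_3)+(2-x_3-x_4-x_5)=2-2x_3 .
\]
So two facets from $v$ and two from $v'$ also have functionals summing to the constant $2$, exactly the shape of the vertex relation. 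In terms of rays this is $p(v,x)+p(v,y)=2e(i)=-(p(v',x')+p(v',y'))$, which the paper writes down in its proof of correctness. Every edge produces two such spurious quadruples.

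Your test therefore does not isolate the vertex quadruples. Everything downstream is then unsupported: the bijection $V(\graph)\to V(\graph')$, the edge lines, and the signed-permutation conclusion.

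\textbf{What the paper does instead.} It avoids quadruples altogether. By \cref{Lemma-exactly-twice}, the lattice points that are semi-sums of two rays in at least two ways are exactly the $\pm e(i)$. A linear isomorphism permutes the rays, so it permutes $\{\pm e(i)\}$. Being linear, it is therefore a signed permutation matrix, from which $f$ and $g$ are read off.

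\textbf{Repairing your route.} You would need two extra steps. First, classify all positive four-term relations among the facet normals. With no multiple edges and at least five vertices per component, these are exactly the vertex quadruples and the cross quadruples above. Second, separate the two kinds combinatorially. For example, a vertex quadruple meets every other such quadruple in $0$ or $2$ facets. A cross quadruple along $vv'$, by contrast, meets a suitable cross quadruple along another edge at $v$ in exactly one facet.

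\textbf{Smaller points.}
\begin{enumerate}
  \item In the paper, $P(\graph)$ is the convex hull of the $4|V|$ rays $p(v,s)$. The inequality polytope you start from is an affine image of its polar dual. This is repairable. An affine isomorphism of $P(\graph)$ permutes the rays, hence fixes their barycenter, which is the origin by \cref{lemma:center}. So it is linear, its inverse transpose identifies the polar duals, and your facet normals become the rays.
  \item Reflecting a single edge $x_e\mapsto 1-x_e$ does not permute the four inequalities at a vertex. It switches the parity class at both endpoints of $e$.
  \item Loops, multiple edges and genus~$2$ are excluded by the paper's standing hypotheses. So the obstacle you anticipate is not the real one; the real one is the generic phenomenon above.
\end{enumerate}
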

In other words, \cref{theorem:main} can be seen as a conservativity result
for the functor~$P$ discussed in \cref{section:colorings},
whereas \cref{theorem:colored-reconstruction} is a fullness result.
Analogs of \cref{theorem:principal} also hold for colored graphs,
and are discussed in the main body of the text.
We now discuss applications of our results in
discrete random walks, Bloch--Floquet theory, symplectic topology, and in algebraic geometry.

\paragraph{Application: discrete random walks}
In \cite{gp-tqft} we defined a class of Laurent polynomials~$\widetilde{W}_\graph$
using graphs and subsequently discussed their properties.
In \cref{section:random-walks} we attach random walks~$\overline{W}_\graph$ to these Laurent polynomials.
In this language, points \cref{enumerate:principal-newton-polytope} and \cref{enumerate:principal-equal-invariants}
of \cref{theorem:principal} yield the following.
\begin{alphacorollary}[\cref{theorem:cannot-hear}]
Mean-zero lattice random walks~$\overline{W}_{\graph_i}$
associated with a collection of pairwise-distinct graphs~$\graph_i$
have pairwise-distinct shapes, but pairwise-equal probabilities of return to the origin after~$n$~steps for any~$n\in \mathbb{N}$.
\end{alphacorollary}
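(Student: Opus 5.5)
The plan is to treat the corollary as the combination of two independent facts about the random walks $\overline{W}_{\graph_i}$: equality of return probabilities, which comes from the graph-potential theory of \cite{gp-tqft}, and distinctness of shapes, which comes from \cref{theorem:main}. First I fix the construction. For a trivalent (colored) graph $\graph$ of genus $g$ with $2g-2$ vertices and $3g-3$ edges, the graph potential $\widetilde{W}_\graph$ is a sum over vertices of four monomials each, so it has $8(g-1)$ terms. This matches $\#S_j=8m$ with $m=g-1$ and dimension $3m=3g-3$. The walk $\overline{W}_\graph$ is obtained by normalising the coefficients to be uniform. The walk is mean-zero because the local vertex potential is symmetric under $x_e\mapsto x_e^{-1}$ in pairs, so the exponents at each vertex sum to zero. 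The squared norm of every exponent vector is $3$, since each vertex term has all three $\pm1$ exponents. This also gives the approximation to the Laplacian as in \cref{theorem:principal}\ref{enumerate:principal-approximation}, because the second moments are isotropic.

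\textbf{Equal periods.} The key input is the result of \cite{gp-tqft} that the classical period $\Pi_{\widetilde{W}_\graph}$ depends only on the genus (and the parity of the coloring), not on $\graph$. I would recall its mechanism. The constant term of $\widetilde{W}_\graph^n$ is computed by a TQFT-type gluing in which each vertex contributes a fixed local kernel and each edge contracts indices. This gives a sum over edge labelings that is invariant under elementary moves (mutations/flops) of the graph. Any two trivalent graphs of the same genus are connected by such moves, so the periods agree. The same reasoning gives equality of $K_W$ and $\zeta_W$, since mutations are volume-preserving birational maps of the torus that preserve the pushforward measure $M_*\mu$. Alternatively, these transforms are determined by the moments $a_n$ via the density of states. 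The return probability after $n$ steps of $\overline{W}_\graph$ is $a_n/(8m)^n$, so it agrees for all $\graph_i$ of the same genus.

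\textbf{Distinct shapes.} The Newton polytope of $\widetilde{W}_\graph$ should be identified with the polar dual, or an affine image, of the quantum Clebsch--Gordan polytope $P(\graph)$, up to a lattice change of coordinates. Affine equivalence of supports, or of Newton polytopes, therefore induces affine equivalence of the $P(\graph_i)$, passing through duality if needed; this requires $0$ to be an interior point, which the mean-zero property provides. By \cref{theorem:main} this forces $\graph_i\cong\graph_j$, which contradicts the assumption that the graphs are pairwise distinct.

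\textbf{Main obstacle.} The delicate step is the precise identification between $\NP(\widetilde{W}_\graph)$ and $P(\graph)$: I need the duality or affine correspondence to be canonical enough that an affine isomorphism of one side transports to the other. I would first check this on a single vertex, where the tetrahedron with vertices of norm $\sqrt3$ is dual to the Clebsch--Gordan tetrahedron. I would then argue facetwise, using convex independence of $S_j$ so that the support equals the vertex set of $\NP$.
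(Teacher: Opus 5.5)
Your proposal is correct and follows the paper's proof. Equality of return probabilities is the genus-only dependence of the periods from \cite{gp-tqft}, via invariance under elementary transformations and Hatcher--Thurston connectivity, and distinct shapes follow from \cref{theorem:main} applied to the Newton polytopes. The step you flag as the main obstacle is not one: $P(\graph)$ is \emph{defined} as the convex hull of the exponent vectors $p(v,s)$, so $\NP(\widetilde{W}_\graph)=P(\graph)$ exactly (\cref{proposition:newton-polytope-qcg}), and no polar duality or facetwise check is needed (the polytope cut out by the Clebsch--Gordan inequalities is the polar dual, not the object of \cref{theorem:main}).
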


\paragraph{Application: Schrödinger operators with periodic potentials}
\Cref{section:fermi} deals with an application of the reconstruction theorem
to Bloch--Floquet theory of Schrödinger operators with periodic potentials.
We postpone all the relevant definitions for now and refer the reader to \cref{section:fermi}.
In this language, points \cref{enumerate:principal-newton-polytope} and \cref{enumerate:principal-equal-invariants}
of \cref{theorem:principal} yield the following.

\begin{alphacorollary}
For pairwise-distinct graphs~$\graph_i$, the density-of-states functions of associated periodic Schr\"odinger operators
coincide, but their hoppings (discretizations of Laplacian)
are pairwise-distinct.
\end{alphacorollary}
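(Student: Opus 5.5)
The plan is to deduce this corollary from \cref{theorem:principal} applied to the graph potentials, using the dictionary between Laurent polynomials and periodic operators set up in \cref{section:fermi}. Take pairwise non-isomorphic trivalent graphs~$\graph_i$ of the same genus, with their normalized graph potentials~$\overline{W}_{\graph_i}$, and let~$\mathrm{D}_{W_i}$ be the difference operators of \eqref{eqn:difference}. View~$\mathrm{D}_{W_i}$ as a periodic Schrödinger operator whose field potential is constant; its hopping is~$p_i$, the coefficient distribution of~$\overline{W}_{\graph_i}$. Bloch--Floquet theory diagonalizes~$\mathrm{D}_{W_i}$ on~$\ell^2(L)$ by the Fourier transform to the torus~$\R^d/2\pi L$. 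It becomes multiplication by the symbol~$\phi_i(\theta)-\phi_i(0)$. Hence the integrated density of states is the push-forward of the normalized Haar measure on the torus under this symbol.

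\textbf{Step 1: the density of states is determined by the periods.} The pushed-forward measure is compactly supported, since~$\phi_i$ is bounded. Its moments are~$\frac{1}{(2\pi)^d}\int(\phi_i(\theta)-c)^m\,\dd\theta$, with~$c=\phi_i(0)$. By the binomial theorem these are fixed linear combinations of the period terms~$a_k$ in \eqref{equation:classperiod}. The constant~$c$ is the same for all~$i$: in the unnormalized form it equals~$\#S_j = 8m$, which is independent of~$i$. A compactly supported measure on~$\R$ is determined by its moments. Therefore equality of the period sequences~$\Pi_{W_i}$, which is point~\cref{enumerate:principal-equal-invariants} of \cref{theorem:principal}, gives equality of the density-of-states measures. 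Equivalently, it gives equality of their Laplace transforms, the heat traces, which the paper already identified with the generating function of~$\Pi_p$.

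\textbf{Step 2: the hoppings are distinct.} Two hopping distributions~$p_i$ and~$p_j$ describe the same discrete Laplacian up to a change of lattice coordinates only if some~$b\in\mathrm{GL}(L)$ carries~$p_i$ to~$p_j$. Such a~$b$ would map~$\Supp(p_i)=S_i$ onto~$S_j$ and so induce an affine equivalence of these sets. Point~\cref{enumerate:principal-newton-polytope} excludes this; its underlying input is \cref{theorem:main}, which recovers~$\graph_i$ from the Newton polytope, the quantum Clebsch--Gordan polytope~$P(\graph_i)$. So the hoppings are pairwise distinct, even up to lattice automorphisms and translations.

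\textbf{Main obstacle.} The real difficulty is the input of \cref{theorem:principal}\cref{enumerate:principal-equal-invariants}, namely that~$\Pi_{\overline{W}_\graph}$ depends only on the genus. I take that as established in \cref{section:gp}, coming from the TQFT-type gluing of graph potentials in \cite{gp-tqft}. Within this corollary, the only care needed is normalization. The random-walk normalization~$\overline{W}=W/8m$ and the subtraction of~$\phi(0)$ in the Laplacian must be uniform across all~$i$. This holds because all~$S_i$ have the same cardinality~$8m$ and lie on the same sphere.
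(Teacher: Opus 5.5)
Your Step~2, and the overall plan of combining \cref{enumerate:principal-newton-polytope} and \cref{enumerate:principal-equal-invariants} of \cref{theorem:principal}, are exactly what the paper does. The gap is in Step~1: your moment argument is only valid for real-valued symbols, and the symbols here are not real. The graph walks are not symmetric: $-p(v,s)$ is never a ray, as the paper states in its paragraph on symmetric random walks. So $\phi_i$ is genuinely complex-valued and $\mathrm{D}_{W_i}$ is normal but not self-adjoint. Hence $\mu_i=(\phi_i-c)_*\mathrm{Haar}$ is a compactly supported measure on $\C$, not on $\R$. The periods only give its holomorphic moments $\int w^m\,\dd\mu_i(w)$, and these do not determine a measure on $\C$; for instance, all rotation-invariant measures share them. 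You would need the mixed moments $\int w^m\bar w^n\,\dd\mu_i$, i.e.\ the constant terms of $W^m(z)W^n(z^{-1})$. \Cref{enumerate:principal-equal-invariants} says nothing about these, since the birational change of coordinates $x(ab+cd)=y(ac+bd)$ does not preserve the compact torus $|z|=1$.

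In fact the equality you claim is false. Take the genus-$4$ graphs $K_{3,3}$ and the prism. Then $\frac{1}{(2\pi)^d}\int|\phi|^6\,\dd\theta$ counts ordered solutions in rays of $p_1+p_2+p_3=q_1+q_2+q_3$. A case analysis based on the parity argument in the proof of \cref{Lemma-exactly-twice} shows this count is the same for both graphs, except for a contribution from each triangle $v_1v_2v_3$ with edges $a=v_1v_2$, $b=v_2v_3$, $c=v_3v_1$. Such a triangle gives the relations $p(v_1,s)+p(v_2,t)+p(v_3,u)=p(v_1,s')+p(v_2,t')+p(v_3,u')$ whose semi-differences at the three vertices are $\epsilon_a e(a)+\epsilon_c e(c)$, $-\epsilon_a e(a)+\epsilon_b e(b)$ and $-\epsilon_b e(b)-\epsilon_c e(c)$. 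This yields $8\cdot 36$ extra ordered solutions per triangle. The prism has two triangles and $K_{3,3}$ has none. So the two spectral measures on $\C$ have different sixth absolute moments and are different.

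The paper avoids this because it never uses the spectral measure on the compact torus. It takes the density of states to be the object whose Laplace transform is the generating function of the period sequence. Equivalently, this is the heat trace per unit volume $\frac{1}{(2\pi)^d}\int e^{-t(\phi-c)}\,\dd\theta$, an entire function of $t$ whose Taylor coefficients are your binomial combinations of the $a_k$. Alongside it, the paper uses the real measure $M_*\mu_{\R^d}$, whose transforms are $K_W$ and $\zeta_W$. With that reading the corollary is literally \cref{enumerate:principal-equal-invariants}. For $M_*\mu_{\R^d}$ one even gets equality of measures with no moment problem: on the real slice the change of coordinates is a diffeomorphism of $\R^d$ with Jacobian one that intertwines $M_{\gamma}$ and $M_{\gamma'}$. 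So keep the last sentence of your Step~1 (equality of heat traces) as the conclusion. Drop the upgrade to equality of the Haar push-forwards, which would require real symbols, i.e.\ symmetric walks.
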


\paragraph{Application: symplectic topology}
Our initial motivation to prove these reconstruction theorems
was \cite[Corollary~3.10]{gp-sympl} and \cite[Example~4.10]{MR4498821},
where to every graph~$\graph$ of genus~$g$
we associated a monotone Lagrangian torus~$L(\graph)$ in a symplectic manifold~$\mathcal{N}_g$,
and computed Floer potentials of~$L(\graph)$ to be
Laurent polynomials~$W_{\graph}$
with Newton polytopes~$P(\graph)$.
Since Floer potentials are invariants of monotone Lagrangian tori
up to Hamiltonian isotopy, \cref{theorem:fullness}
combined with symplectic results of the cited papers proves the following.
\begin{alphatheorem}{\label{theorem:monotone}}
  Monotone Lagrangian tori~$L(\graph) \subset \mathcal{N}_g$
  are pairwise Hamiltonian non-isotopic
  for distinct graphs~$\graph$ of genus~$g$ provided the graphs have no loops
  or multiple edges. 
\end{alphatheorem}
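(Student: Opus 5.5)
The plan is to turn a hypothetical Hamiltonian isotopy into an affine isomorphism of polytopes, and then into a graph isomorphism. Take graphs \(\graph\) and \(\graph'\) of genus~\(g\), with no loops or multiple edges. Suppose \(L(\graph)\) and \(L(\graph')\) are Hamiltonian isotopic in \(\mathcal{N}_g\). We will show that \(\graph\cong\graph'\).

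\textbf{Step 1: the Floer potentials agree.} The disc potential of a monotone Lagrangian torus is a Hamiltonian-isotopy invariant, up to a \(\Z\)-linear change of coordinates \(b\colon H_1(L(\graph);\Z)\isomto H_1(L(\graph');\Z)\). The isotopy identifies the two \(H_1\) lattices and carries the holonomy (local system) coordinates of one torus to those of the other. By \cite[Corollary~3.10]{gp-sympl} and \cite[Example~4.10]{MR4498821}, the potentials are the graph potentials \(W_{\graph}\) and \(W_{\graph'}\). So we get \(b_*W_{\graph}=W_{\graph'}\) as Laurent polynomials. We must check that the identification of \(H_1(L(\graph))\) with the lattice \(\Z^{E(\graph)}\) (or the relevant sublattice) in those references is a genuine lattice isomorphism. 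Then, by the remark on Newton polytopes in the introduction, \(b_\R\) maps \(\NP(W_\graph)\) onto \(\NP(W_{\graph'})\).

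\textbf{Step 2: pass to the quantum Clebsch--Gordan polytopes.} By the cited computations, \(\NP(W_\graph)\) is \(P(\graph)\), possibly after a fixed affine identification or up to the polar dual. If it is the dual, taking polar duals turns the linear map \(b_\R\) into its inverse transpose, which is still linear. Either way we obtain an affine linear isomorphism \(P(\graph)\isomto P(\graph')\).

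\textbf{Step 3: recover the graph.} \Cref{theorem:main}, or in its colored form \cref{theorem:fullness}, now gives \(\graph\cong\graph'\). This is the contrapositive of the statement, so the tori are non-isotopic whenever the graphs are distinct.

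\textbf{Main obstacle: the hypothesis on loops and multiple edges.} I expect it is needed because the symplectic construction of \(L(\graph)\) and the identification of its Floer potential with \(W_\graph\) in the cited works are only established in that case. It may also be that \(\NP(W_\graph)\) equals the full \(P(\graph)\) only for simple graphs. To address this, I would first check that the torus \(L(\graph)\) is monotone and non-displaceable under this hypothesis. I would then check that its Floer potential is exactly \(W_\graph\), with no additional disc contributions beyond those in \cite{gp-sympl}, so that all vertices of \(P(\graph)\) actually appear in the support. If the hypothesis instead enters through an extra symmetry, I would handle it by applying \cref{theorem:fullness} in the colored category, which controls all isomorphisms precisely.
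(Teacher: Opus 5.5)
Your proposal is correct and is essentially the paper's argument. The Floer potential is a Hamiltonian-isotopy invariant up to a $\mathrm{GL}(H_1)$ change of coordinates. By the cited computation it is the graph potential, whose Newton polytope is the quantum Clebsch--Gordan polytope; by \cref{proposition:newton-polytope-qcg} no polar duality is involved. The reconstruction theorem then finishes the argument, and the paper invokes it in its colored form \cref{theorem:fullness}, since $\mathcal{N}_g$ is the odd character variety. The one misstep is your ``main obstacle'' paragraph: the hypothesis excluding loops and multiple edges enters through the combinatorial reconstruction, not the symplectic input, because \cref{proposition:scalar-product} and \cref{Lemma-exactly-twice} (and hence \cref{theorem:main} and \cref{theorem:fullness}) are proved only for loopless graphs in which adjacent vertices share exactly one edge.
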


The symplectic manifold~$\mathcal{N}_g$ of \cref{theorem:monotone}
is known as an odd~$\mathrm{SU}(2)$-character variety.
It parametrizes twisted representations of the fundamental group of a surface~$\Sigma_g$ of genus~$g$,
and its standard/even version parametrizes untwisted representations
and is related to even colorings.
Its algebro-geometric interpretation,
and in particular the association
with the Torelli theorem is discussed next.

\paragraph{Application: tropical non-abelian Torelli theorem}
It is well known that trivalent graphs govern degenerations of a smooth curve
into the deepest strata of the Deligne--Mumford--Knudsen compactification of the moduli space of curves.
Here, we consider polytopes~$P(\graph)$ associated to trivalent graphs which
are polar dual to moment polytopes of toric Fano varieties
that can be realized as degenerations of the moduli space~$\moduli_C(2,\mathcal{L})$
of semistable rank-2 bundles over a Riemann surface~$C$ of genus~$g(C) = g(\graph)$
with fixed determinant~$\mathcal{L}$~\cite{MR2928457} obtained via the graph~$\graph$.
Toric degenerations are often used to tropicalize algebraic varieties.

Analogously to the usual non-abelian Torelli theorem,
that recovers the smooth curve~$C$ from the moduli space~$\moduli_C(2,\mathcal{L})$,
\cref{theorem:colored-reconstruction}
recovers the graph~$\graph$, a tropicalization of a curve~$C$,
from the polytope~$P(\graph)$, a tropicalization of the moduli space~$\moduli_C(2,\mathcal{L})$,
so it can be considered as a
\emph{tropical/combinatorial non-abelian Torelli theorem}:
We refer the reader to~\cref{section:algebraic-geometry}
for precise statements and a more detailed discussion.

\paragraph{Structure of the paper}
In \cref{section:graphs,section:polytopes} we discuss trivalent graphs and their various properties,
followed by a definition of the notion of quantum Clebsch--Gordan (qCG) polytopes associated to these graphs.
\Cref{section:polytopes} also contains key lemmas concerning the rays and vertices of these qCG polytopes
which are used to prove a key reconstruction theorem in \cref{section:reconstruction}.
\Cref{section:gp} introduces graph potentials, whose Newton polytopes are qCG polytopes,
and which play a role in the following three sections.
\Cref{section:changeofcoordinates} addresses part~\cref{enumerate:principal-equal-invariants} of \cref{theorem:principal}.
\Cref{section:random-walks} discusses a reinterpretation of \cref{theorem:principal}
in the setting of discrete random walks.
In \cref{section:fermi}, we recall the notion of Fermi varieties and interpret our result in that context.
\Cref{section:colorings} discusses colored graphs and proves \cref{theorem:colored-reconstruction}.
This has applications in \cref{section:algebraic-geometry}.
The topics in \cref{section:algebraic-geometry} are mostly algebro-geometric in nature,
where we prove a discrete/tropical/combinatorial analogue of the classical abelian and non-abelian Torelli theorems.

No knowledge of algebraic geometry is assumed except for \cref{section:algebraic-geometry}.
\Cref{section:graphs,section:polytopes,section:reconstruction,section:changeofcoordinates}
are essential for the entire paper and require minimal background knowledge.
We recommend reading \cref{section:random-walks} for readers with interests in random walks.
For readers interested in quantum mechanics or solid state physics,
we recommend reading \cref{section:fermi}.
\Cref{section:colorings} contains colored analogues of the results proved in the previous sections
and is essential only for the applications in \cref{section:algebraic-geometry}.
We recommend \cref{section:algebraic-geometry} for readers with interests in algebraic geometry.

\paragraph{Acknowledgements}
We thank Yves Benoist, Indranil Biswas, Promit Ghosal, Yuri Lima, Rodrigo Matos, and Carlos Tomei
for interesting discussions.

This collaboration started
at the Tata Institute for Fundamental Research (TIFR)
during SG's visit in December 2019--March 2020
and PB's visit in February 2020.
We thank TIFR for the very pleasant working conditions.

PB was partially supported by the FWO (Research Foundation---Flanders).
SG was partially supported by CNPq grants PQ 315747 and PQ 308303,
and Coordena\c{c}\~{a}o de Aperfei\c{c}oamento de Pessoal de N\'ivel Superior -- Brasil (CAPES) -- Finance Code 001.
SM was partially supported by
the Department of Atomic Energy, India, under project no. 12-R\&D-TFR-5.01-0500,
and by the ANRF, India (ANRF/ARGM/2025/000957/MTR).
\section{Graphs}
\label{section:graphs}
This section serves to introduce our notation
and intuition for graphs;
we refer the reader to \cite{MR1633290} for details.

A directed multigraph~$\graph$
with vertex set~$V = V(\graph)$
and edge set~$E = E(\graph)$
is given by two maps~$s,t\colon E \to V$, the source and target of an oriented edge.
One can also consider an involution that flips the orientation and exchanges~$s$ with~$t$.

The incidence matrix~$B^\graph$ of size~$|E| \times |V|$
is the difference between linearizations of~$s$ and~$t$:
$B(e,v)$ is~$+1$ if~$v$ is the source of~$e$ ($s(e) = v$),
$-1$ if~$v$ is the target of~$e$ ($t(e) = v$),
and~$0$ otherwise. In case~$e$ is a loop at vertex~$v$,
i.e., an edge going from~$v$ to itself,
one has~$B(e,v)=0$,
but for simplicity in this paper let us consider only graphs without loops.

If~$B$ is considered as an~$|V|$-tuple of vectors in~$\R^E$
then its Gram matrix in the standard Euclidean metric equals~$B^{\mathrm{t}} B = D - A$,
where~$D$ is a diagonal matrix with entries~$D(v,v)$
equal to the valencies of the vertices, and~$A$ is an off-diagonal symmetric
adjacency matrix with~$A(v,v')$ equal to the number of edges incident with both~$v$ and~$v'$.
If there are multiple edges between~$v$ and~$v'$ then~$|A(v,v')|>1$,
but for simplicity in this paper let us consider only graphs without multiple edges.

We say that a graph~$\graph$ is \emph{trivalent} if~$D$ is a scalar matrix with~$3$ on the diagonal,
and will consider only such graphs.
We say that a trivalent graph~$\graph$ has \emph{genus}~$g$ if it is connected
and has~$(2g-2)$ vertices, which also implies that it has~$(3g-3)$ edges.

The definitions of quantum Clebsch--Gordan polytopes from \cref{section:polytopes}
and graph potentials from \cite{gp-tqft}
make sense for general trivalent multigraphs,
with possible leaves, loops, multiple edges,
and the theory of graph potentials crucially uses
these extra degrees of freedom;
however, for the applications in this paper it suffices to consider
only trivalent graphs
\begin{itemize}
  \item that are uniquely determined by their adjacency matrix,
    which has exactly~$3$ non-zero entries (equal to~$1$)
    in each column;
  \item for which every connected component
    has at least~$5$ vertices.
\end{itemize}

For the rest of the paper, our trivalent graphs will have
these two additional properties.
These properties are chosen to simplify the proof and are sufficient for our purposes.

Free abelian groups of~$\Z$-valued functions on~$V$ and~$E$
are known as the groups of~$0$-cochains~$\mathrm{C}^0(\graph) = \Z^V$
and~$1$-cochains~$\mathrm{C}^1(\graph) = \Z^E$, respectively.
They come with standard bases~$\{v^*\}_{v\in V}$,~$\{e^*\}_{e\in E}$
of delta-functions, and with inner products in which these bases are orthonormal.

The random walks of \cref{section:random-walks} can be considered to
happen either in the lattice
$L = \mathrm{C}^1(\graph) = \Z^E$ of~$1$-cochains,
or in some particular sublattice.

The incidence matrix~$B$ is a matrix of a~$\Z$-linear
coboundary map
$\delta\colon\mathrm{C}^0(\graph) \to \mathrm{C}^1(\graph)$,
given by
\begin{equation}
  \delta(v^*) = \sum_{\mathclap{e\in E\colon s(e) = v}}\quad e^* -\quad \sum_{\mathclap{e\in E\colon t(e) = v}}\quad e^*,
\end{equation}
i.e., the image of a vertex is a formal sum of outgoing edges minus the sum of incoming edges.
In our case every column of~$B$ has exactly three non-zero entries equal to~$\pm 1$.

The groups
\begin{equation}
  \mathrm{Z}^V := \mathrm{C}^0(\graph),
  \ \mathrm{Z}^E := \mathrm{C}^1(\graph),
  \ \Ker\left( \delta\colon\mathrm{C}^0(\graph) \to\mathrm{C}^1(\graph) \right)\ \mbox{and}
  \ \Coker\left( \delta\colon\mathrm{C}^0(\graph) \to \mathrm{C}^1(\graph) \right)
\end{equation}
are the groups of~$0$-cochains,~$1$-cochains,
zeroth cohomology~$\mathrm{H}^0(\graph)$
and
first cohomology~$\mathrm{H}^1(\graph)$ of the graph, respectively.

The topological interpretation of the transpose matrix~$B^{\mathrm{t}}$ is of a dual map
$\partial\colon\mathrm{C}_1(\graph) \to \mathrm{C}_0(\graph)$ from~$1$-chains to~$0$-chains,
and its kernel and cokernel are known as first homology~$\mathrm{H}_1(\graph)$
and zeroth homology~$\mathrm{H}_0(\graph)$, respectively. For these matters
one can replace the integers~$\Z$ by any other abelian group, e.g.,~$\R$ or
$\Z/2 \simeq \{\pm 1\}$.
We will later consider polytopes
in~$\mathrm{C}^1(\graph,\R) = \R^E$ and their polar duals in~$\mathrm{C}_1(\graph,\R)$,
and in \cref{section:colorings} we consider coloring functions on vertices,
that are essentially~$0$-chains valued in a group of two elements.

In \cref{section:algebraic-geometry}, the trivalent graphs~$\graph$
are discrete/combinatorial counterparts (and tropical limits)
of Riemann surfaces.
The notion of genus for a graph
coincides with that of genus for a Riemann surface
under this dictionary.

The differential~$\delta\colon\mathrm{C}^0(\graph) \to \mathrm{C}^1(\graph)$
is a counterpart of de Rham differential~$\dd_{\mathrm{dR}}\colon \Omega^0(C) \to \Omega^1(C)$
from functions to differentials on a Riemann surface~$C$.
So analytically
the transpose matrix~$B^{\mathrm{t}}$
can be interpreted as a counterpart of the Hodge-theoretic adjoint map
$\dd_{\mathrm{dR}}^*\colon \Omega^1(C) \to \Omega^0(C)$;
a counterpart of the Hodge metric on~$C$
being two inner products on~$\mathrm{C}^0(\graph) = \Z^V$ and~$\mathrm{C}^1(\graph) = \Z^E$
with respect to which the standard bases~$v^*$ and~$e^*$ are orthonormal.
Then~$B^{\mathrm{t}}$ is interpreted as~$\delta^*\colon\mathrm{C}^1(\graph) \to\mathrm{C}^0(\graph)$,
and the square matrix~$B^{\mathrm{t}} B$, i.e.,~$\delta^* \delta\colon \mathrm{C}^0(\graph) \to \mathrm{C}^0(\graph)$
is a counterpart of the Laplace operator acting on functions on a Riemann surface.

\section{Quantum Clebsch--Gordan polytopes}
\label{section:polytopes}

We start with a definition and a
study of some properties of quantum Clebsch--Gordan polytopes (qCG-polytopes for short) attached to graphs,
that in the end will help to characterize them and reconstruct the respective graphs.

These polytopes are embedded in
the affine space~$\R^{E}$
with coordinates naturally indexed by the edges.
The faces of these polytopes~$P(\graph)$
are parameterized by the vertices~$v$.
For a vertex~$v\in V$
with edges~$i,j,k$ incident to it,
we consider the \emph{quantum Clebsch--Gordan equations}
for the real parameters~$x,y,z$ indexed by~$i,j,k$
which read
\begin{equation}
  \left\{
    \begin{array}{ll}
      x\leq y+z; y\leq x+z\  \mbox{and} \ z\leq x+y, &\text{triangle inequality} \\
      x+y+z\leq 1, & \text{perimeter inequality} \\
      \pm x\pm y\pm z \in \Z.
    \end{array}
  \right.
\end{equation}
These inequalities can be combined as follows.
To a graph \(\graph\) with its set \(V(\graph)\) of vertices
associate the vector space \(S(\graph):=\R^{E}\)
generated by a basis \(e(i)\)
indexed by the edges of \(\graph\).
The \emph{quantum Clebsch--Gordan polytope} \(P(\graph)\) in~$S(\graph)$
is the convex envelope of the vectors
\begin{equation}
  \label{equation:rays}
  p(v,s):=s(i) e(i) + s(j) e(j) + s(k) e(k)\in S(\graph), %
\end{equation}
for all~$v\in V(\graph)$
and all sign choices~\(s(i),s(j),s(k)\in\{\pm1\}\)
\begin{equation}
  \label{equation:sign-choice-no-coloring}
  s(i)s(j)s(k)=1,
\end{equation}
where the edges \(i,j,k\) are incident to the vertex \(v\).

\begin{remark}
The polar dual of $P(\graph)$ is given by $4|V|$ equations,
namely by four quantum Clebsch--Gordan equations for every vertex.
\end{remark}

To avoid confusion between the vertices of the graph and vertices of the polytope,
we will refer to the vertices of the polytope as \emph{rays} in what follows,
and we are going to consider edges of the graph, but not of the polytope.

For a fixed graph~$\graph$
we consider a collection of vectors~$p(v,s)$
associated to all vertices~$v$ and admissible~$s$.
Each of them in the basis \(\{e(i)\}\)
is a permutation of the entries of the vector
\((s(1)1,s(2)1,s(3)1,0,0,...,0)\) where~$s(1)s(2)s(3)=1$.
Consider the Euclidean norm in which the basis \(e(i)\) is orthonormal.

\begin{proposition}
  \label{proposition:scalar-product}
  The scalar product~$(p(v,s),p(v',s'))$ takes four values:
  \begin{itemize}
    \item~$+3$ if and only if~$v=v'$ and~$s=s'$;
    \item~$+1$ if and only if~$v$ and~$v'$ are adjacent along an edge~$e$ and~$s(e) = -s'(e)$;
    \item~$-1$ if and only if either of two possibilities hold
      \begin{itemize}
      \item~$v=v'$ and~$s\neq s'$,
      \item~$v$ and~$v'$ are adjacent along an edge~$e$ and~$s(e) = s'(e)$;
      \end{itemize}
    \item~$0$ otherwise (i.e., if~$v$ and~$v'$ are neither coincident nor adjacent).
  \end{itemize}
\end{proposition}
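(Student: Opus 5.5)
The plan is a direct coordinate computation, organized by the relative position of $v$ and $v'$ in $\graph$. The reading of the sign condition in the adjacent case is fixed first, because the stated conditions depend on it. The sign $s(e)$ attached to the pair $(v,e)$ will be read relative to the orientation recorded by the incidence matrix $B$ of \cref{section:graphs}. Concretely, the $e$-coordinate of $p(v,s)$ is taken to be $B(e,v)\,s(e)$, so that $s$ is a sign on the half-edge at $v$, measured the way the coboundary $\delta$ measures it. If instead one reads \eqref{equation:rays} with no orientation twist, the argument below is unchanged, except that the factor $B(e,v)B(e,v')=-1$ disappears. In that case the two adjacent sub-cases trade places. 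So the whole content of the adjacent case is the sign of that single product, and I will make it explicit.

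First I write $p(v,s)=\sum_{e\ni v}B(e,v)s(e)\,e(e)$, so that
\[
(p(v,s),p(v',s'))=\sum_{e\ni v,\ e\ni v'}B(e,v)B(e,v')\,s(e)s'(e).
\]
The sum runs over the edges incident to both $v$ and $v'$. The standing assumptions of \cref{section:graphs} (no loops, no multiple edges) say this set is all three edges if $v=v'$, exactly one edge if $v,v'$ are adjacent, and empty otherwise. The value is therefore $0$ in the non-adjacent case at once.

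For $v=v'$ every factor $B(e,v)^2$ equals $1$, so the product is $\sum_{e\in\{i,j,k\}}s(e)s'(e)$. This is $3$ if $s=s'$. If $s\neq s'$, the constraint \eqref{equation:sign-choice-no-coloring} on both sign choices forces $s$ and $s'$ to differ in exactly two of the three edges. Indeed, they cannot differ in exactly one or in all three, since each of those changes the product $s(i)s(j)s(k)$. Hence the value is $1-1-1=-1$.

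For $v,v'$ adjacent along $e$, the edge $e$ has one of them as source and the other as target, so $B(e,v)B(e,v')=-1$. The product is therefore $-s(e)s'(e)$. It equals $+1$ exactly when $s(e)=-s'(e)$ and $-1$ exactly when $s(e)=s'(e)$, as stated.

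Finally, the three cases $v=v'$, adjacent, and neither are exhaustive and mutually exclusive, and the values obtained within them are as listed. Hence each ``if'' in the statement upgrades to ``if and only if''. The one genuine obstacle is the sign bookkeeping in the adjacent case: one must check that the convention for $s(e)$ is applied consistently at both endpoints of $e$. I would verify this first on a single edge with its two endpoints before writing the general argument.
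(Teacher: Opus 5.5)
Your computation is the paper's: the scalar product is a sum over the edges shared by $v$ and $v'$. There are three shared edges when $v=v'$, where admissibility forces two sign differences if $s\neq s'$; one shared edge when $v$ and $v'$ are adjacent; and none otherwise. The one real difference is your orientation twist, and here you should trust your closing remark rather than the twist itself.

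The paper's rays \eqref{equation:rays} carry no factor $B(e,v)$. The genus-$3$ example in \cref{section:reconstruction} and the graph potential $\widetilde{W}_{\graph}$ use the same convention. In that example, $(1,1,1,0,0,0)$ and $(1,0,0,1,1,0)$ come from vertices adjacent along $a$ with $s(a)=s'(a)=1$, and their scalar product is $+1$. So the paper's ``single adjacent term'' is $s(e)s'(e)$, and the two adjacent clauses of the statement as printed are swapped relative to the paper's own definitions; the paper's proof does not notice this.

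The twist is also not a harmless relabeling. With $t(e)=B(e,v)s(e)$ one gets $t(i)t(j)t(k)=(-1)^{\text{in-degree of }v}$. So at vertices of odd in-degree you have replaced the quadruple of rays by its negative. You are then computing in $P(\graph,c)$ with $c(v)=(-1)^{\text{in-degree of }v}$. Its total parity is $(-1)^{3g-3}$, which is odd for even $g$, so it is not even a coordinate sign change of $P(\graph)$.

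The clean version is to keep the paper's definition and exchange the conditions $s(e)=s'(e)$ and $s(e)=-s'(e)$ in the two adjacent clauses. Nothing downstream depends on which adjacent sign pattern gives $+1$: not the corollary on distinctness and convex independence, not \cref{Lemma-exactly-twice}, and not the reconstruction.
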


\begin{proof}
  To compute the scalar product, note that it equals
  \begin{equation}
    (p(v,s),p(v',s')) = \sum_e s(e) s'(e),
    \end{equation}
  where the sum is over the edges adjacent to both~$v$ and~$v'$.
  If the vertices~$v,v'$ are neither coincident nor adjacent, the sum is empty.
  For coincident vertices there are three terms,
  all equal to~$+1$ for~$s'=s$ and one~$+1$, two~$-1$s for~$s'\neq s$.
  For adjacent vertices (in the graphs that we consider) there is a single
  adjacent edge, so a single adjacent term.
\end{proof}

\begin{corollary}
  The vectors~$p(v,s)$ are
  \begin{enumerate}
    \item pairwise-distinct,
      i.e.,~$p(v,s) = p(v',s')$ if and only if~$v=v'$ and~$s=s'$;
    \item equidistant from \(0\);
    \item and convex independent,
      i.e., none of them lies in a convex envelope of others.
  \end{enumerate}
\end{corollary}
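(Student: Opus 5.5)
All three parts will be read off from the Gram matrix in \cref{proposition:scalar-product}; the key fact is that the largest possible scalar product, $3$, is attained only on the diagonal.

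\emph{Part (2).} By \cref{proposition:scalar-product}, $(p(v,s),p(v,s))=3$ for every admissible pair $(v,s)$. Directly, $p(v,s)$ has exactly three non-zero coordinates, each equal to $\pm1$. So every $p(v,s)$ lies on the sphere of squared radius $3$ centred at $0$.

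\emph{Part (1).} Suppose $p(v,s)=p(v',s')$. Then
\[
(p(v,s),p(v',s')) = (p(v,s),p(v,s)) = 3 .
\]
By \cref{proposition:scalar-product}, the value $+3$ occurs only when $v=v'$ and $s=s'$. The converse is trivial. One can also see this without the proposition: the support of $p(v,s)$ is the set of three edges at $v$, and in the graphs we consider (no loops, no multiple edges) this set determines $v$. The signs on that support then determine $s$.

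\emph{Part (3).} I will use a standard fact: a point on a sphere is an extreme point of the ball, so it cannot be a non-trivial convex combination of other points of the ball. Concretely, suppose
\[
p(v,s)=\sum_{(v',s')\neq(v,s)} \lambda_{v',s'}\, p(v',s'), \qquad \lambda_{v',s'}\geq 0, \quad \sum \lambda_{v',s'}=1 .
\]
Take the scalar product of both sides with $p(v,s)$. The left side gives $3$. On the right, each term satisfies $(p(v,s),p(v',s'))\leq 1$ by \cref{proposition:scalar-product}, since $(v',s')\neq(v,s)$. Hence the right side is at most $\sum\lambda_{v',s'}=1<3$, a contradiction. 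Therefore no $p(v,s)$ lies in the convex envelope of the others.

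There is no real obstacle here. The only point needing care is that the proposition, and hence part (1), relies on the standing assumptions of no loops and no multiple edges, which guarantee that adjacent vertices share exactly one edge. Under those assumptions the maximal scalar product $3$ is attained only on the diagonal, and all three statements follow at once.
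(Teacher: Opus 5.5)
Your proof is correct and follows the paper's intended route. The paper states this as an immediate consequence of \cref{proposition:scalar-product} with no separate argument, and your deduction of all three parts from the Gram matrix (in particular, pairing a putative convex combination with $p(v,s)$ to obtain $3\leq 1$) is exactly the step the paper leaves implicit.
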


\begin{proposition}
  \label{lemma:center}
  The center of mass of the vectors~$p(v,s)$ is the origin, i.e.,
  \begin{equation}
     \sum_{v\in V}\sum_s p(v,s) = 0.
  \end{equation}
\end{proposition}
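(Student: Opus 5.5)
The plan is to prove the identity vertex by vertex: I will show that for each fixed vertex $v\in V$ the inner sum $\sum_s p(v,s)$ already vanishes. Summing over $v$ then gives the statement. This reduces everything to a computation involving only the three edges $i,j,k$ incident to $v$.

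Fix $v$ with incident edges $i,j,k$. The admissible sign choices are the triples $(s(i),s(j),s(k))\in\{\pm1\}^3$ with $s(i)s(j)s(k)=1$, namely
\begin{equation*}
  (1,1,1),\quad (1,-1,-1),\quad (-1,1,-1),\quad (-1,-1,1).
\end{equation*}
By \eqref{equation:rays} we have
\begin{equation*}
  \sum_s p(v,s)=\Bigl(\sum_s s(i)\Bigr)e(i)+\Bigl(\sum_s s(j)\Bigr)e(j)+\Bigl(\sum_s s(k)\Bigr)e(k).
\end{equation*}
In the list above each coordinate takes the value $+1$ exactly twice and $-1$ exactly twice. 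So each coefficient is zero, and hence $\sum_s p(v,s)=0$.

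A more conceptual way to see the same fact, which also carries over to the colored variants later, is a symmetry argument. Flipping the signs on two fixed edges, say $s\mapsto s'$ with $s'(i)=-s(i)$, $s'(j)=-s(j)$ and $s'(k)=s(k)$, preserves the constraint \eqref{equation:sign-choice-no-coloring}. It is a fixed-point-free involution on the set of admissible $s$. It negates the $e(i)$ and $e(j)$ coordinates of $p(v,s)$, so the coefficients of $e(i)$ and $e(j)$ in the sum vanish. Choosing a different pair of edges handles $e(k)$.

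The graph assumptions (no loops, no multiple edges) are used here only to guarantee that $i,j,k$ are three distinct edges. This makes $p(v,s)$ a genuine sum of three distinct basis vectors, as in \cref{proposition:scalar-product}. The argument would survive without this assumption, since each coordinate still cancels. There is no real obstacle; the only care needed is to observe that the cancellation happens already within each vertex, before summing over $V$.
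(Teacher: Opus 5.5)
Your proof is correct and matches the paper's: the paper likewise partitions the rays into the vertex quadruples $\{p(v,s)\}_s$ and uses the relation $p(v,x)+p(v,y)+p(v,z)+p(v,w)=0$ for the four admissible sign choices, which is exactly your per-vertex coordinate cancellation. Your involution argument (flipping two signs) is a clean way to see why that relation holds, and it carries over verbatim to the colored constraint $s(i)s(j)s(k)=c(v)$.
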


\begin{proof}
  The set of vectors~$p(v,s)$
  is partitioned into quadruples \(\{p(v,s)\}\) enumerated by the vertices \(v \in V(\graph)\).

  If \(x,y,z,w\) are four distinct admissible sign assignments
  then there is a linear relation
  \begin{equation}
    \label{equation:linear-relation}
    p(v,x) + p(v,y) + p(v,z) + p(v,w) = 0,
  \end{equation}
  so the center of mass of the vectors in any such quadruple is \(0\),
  hence the center of mass of all the vectors is~\(0\).
\end{proof}

We obtain the following corollary to \cref{lemma:center,proposition:scalar-product}.

\begin{corollary}
  The association~$(v,s) \mapsto p(v,s)$
  is a bijection from the set of pairs~$(v,s)$ of a vertex and sign assignment
  to the set of rays of the polytope~$P(\graph)$.
  In particular, the polytope~$P(\graph)$ has~$4 |V(\graph)|$ rays.
\end{corollary}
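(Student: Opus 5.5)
The claim has two parts. The map $(v,s)\mapsto p(v,s)$ is a bijection onto the set of rays (vertices) of $P(\graph)$, and consequently $P(\graph)$ has $4|V(\graph)|$ rays. By definition $P(\graph)$ is the convex envelope of the finite set $\{p(v,s)\}$. Hence every vertex of $P(\graph)$ is one of the $p(v,s)$, so the map is surjective onto the rays. What remains is to show that each $p(v,s)$ is actually a vertex, and that distinct pairs give distinct vectors.

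For injectivity I will use \cref{proposition:scalar-product}. The norm is $(p(v,s),p(v,s))=3$. If $(v,s)\neq(v',s')$, the proposition says the scalar product $(p(v,s),p(v',s'))$ lies in $\{1,0,-1\}$, which is never $3$. If the two vectors were equal, their scalar product would equal the common squared norm $3$. So the vectors are pairwise distinct.

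For extremality I will use the separating linear functional $\ell(x)=(p(v,s),x)$. On $p(v,s)$ itself, $\ell$ takes the value $3$. On every other $p(v',s')$ it takes a value at most $1$, again by \cref{proposition:scalar-product}. Now suppose $p(v,s)$ were a convex combination of the other vectors. Then $\ell(p(v,s))$ would be at most $1<3$, which is a contradiction. So each $p(v,s)$ is the unique maximizer of $\ell$ on $P(\graph)$, and hence a vertex. This is exactly the convex-independence statement of the preceding corollary. Alternatively, one can note that all the vectors lie on the sphere of radius $\sqrt3$, and distinct points on a sphere are always convex independent.

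The count follows by enumerating the pairs. For each vertex $v$ with incident edges $i,j,k$, the admissible signs are the triples with $s(i)s(j)s(k)=1$. There are exactly $4$ such triples: $(+,+,+)$ and the three triples with exactly two minus signs. This gives $4|V(\graph)|$ pairs and therefore $4|V(\graph)|$ rays.

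The role of \cref{lemma:center} is only supplementary: it shows that the origin lies in $P(\graph)$. Together with the fact that the vectors span $\R^E$, this justifies calling the vertices ``rays'' from the origin. No step is a real obstacle. The only care needed is the implicit use of the standing assumption of no loops and no multiple edges, since it is what makes \cref{proposition:scalar-product} valid.
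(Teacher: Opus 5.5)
Your proof is correct and follows the paper's argument. The paper derives this corollary from \cref{proposition:scalar-product}, which gives the distinctness, equal norm and convex independence recorded in the preceding corollary, together with the count of four admissible sign triples per vertex; your separating functional $\ell(x)=(p(v,s),x)$ just makes the convex-independence step explicit. You are also right that \cref{lemma:center} is not needed for the bijection itself; it only puts the origin at the center of mass of the rays.
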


Let~$N$ denote the lattice spanned by the vectors~$p(v,s)$.
It is a finite-index sublattice in the lattice generated by~$e(i)$,
which we will denote by~$\widetilde{N}$.

The discussion above shows that the lattice~$N$ can be reconstructed
from the polytope~$P(\graph)$; however, to reconstruct the lattice
$\widetilde{N}$ and the vectors~$\pm e(i)$ we need more information,
as neither is immediate to reconstruct.

Instead let us study small linear relations between the vectors,
since this data will be directly computable from the polytope.
Denote the \emph{weight} of a combination~\( w(v,s) \)
of integer coefficients~$w(v,s)$
to be the sum of absolute values~$\sum |w(v,s)|$.
Call a combination \emph{positive}, if all weights are non-negative.
Such relations form a cone in the space of combinations,
which is a free lattice~$\Z^{4|V|}$.
The kernel of the linear evaluation map
\begin{equation}
  \{w(v,s)\} \mapsto \sum w(v,s) p(v,s) \in N
\end{equation}
is the space of linear \emph{relations},
so linear relations are linear combinations
such that the respective vector in~$N$ is zero.

Since the sum of coordinates of every vector~$p(v,s)$ is equal to~$3$ modulo~$4$
it is clear that~$\sum w(v,s)$ is divisible by~$4$ for any relation,
so the smallest weight of a non-zero relation is~$4$.
In this case either~$\sum w(v,s) = 0$ and the relation can be rewritten
either in the form
\begin{equation}
  p(v_1,s_1) + p(v_2,s_2) = p(v_3,s_3) + p(v_4,s_4)
\end{equation}
or in the form
\begin{equation}
  p(v_1,s_1) - p(v_3,s_3) = p(v_4,s_4) - p(v_2,s_2),
\end{equation}
or~$\sum w(v,s) = \pm 4$ and the relation reads
\begin{equation}
  p(v_1,s_1) + p(v_2,s_2) + p(v_3,s_3) + p(v_4,s_4) = 0.
\end{equation}
So both cases can be rephrased as an equality of sums of pairs of vectors,
and additionally in the first case as an equality of differences of pairs of vectors.

Let us compute such sums and differences,
divided by two for convenience. It is straightforward to check the next result.

\begin{lemma}
For all vertices~$v$ incident to edges~$i,j,k$
and all admissible signs~$s\neq s'$ a semi-sum~$(p(v,s) + p(v,s'))/2$
takes six values~$\pm e(i)$,~$\pm e(j)$,~$\pm e(k)$, each once.
Similarly, a semi-difference~$(p(v,s) - p(v,s'))/2$
takes twelve values
$\pm e(i) \pm e(j), \pm e(i) \pm e(k), \pm e(j) \pm e(k)$, each once.
Note that the semi-sum does not depend on the ordering,
but the semi-difference changes its sign.
\end{lemma}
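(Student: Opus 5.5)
The lemma is a finite check, so the plan is to make the four admissible sign vectors at a vertex explicit and compute directly. Fix a vertex $v$ with incident edges $i,j,k$. The admissible sign assignments, those with $s(i)s(j)s(k)=1$, are exactly
\begin{equation*}
  (+,+,+),\quad (+,-,-),\quad (-,+,-),\quad (-,-,+).
\end{equation*}
Thus $p(v,s)$ ranges over $e(i)+e(j)+e(k)$, $e(i)-e(j)-e(k)$, $-e(i)+e(j)-e(k)$ and $-e(i)-e(j)+e(k)$.

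A structural observation organizes the count. Any two distinct admissible sign vectors $s\neq s'$ differ in exactly two coordinates, since their coordinatewise product is again admissible and non-trivial. So they agree in exactly one coordinate, say at the edge $a\in\{i,j,k\}$. The unordered pairs $\{s,s'\}$ are therefore in bijection with the pairs (common edge $a$, common sign $s(a)$). This gives $3\cdot 2=6$ unordered pairs, which matches $\binom{4}{2}=6$.

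For the semi-sum, the coordinates where $s$ and $s'$ disagree cancel, so $(p(v,s)+p(v,s'))/2 = s(a)e(a)$. By the bijection just described, as $\{s,s'\}$ runs over the six unordered pairs this takes each of the values $\pm e(i),\pm e(j),\pm e(k)$ exactly once. The expression is symmetric in $s$ and $s'$, so it does not depend on the ordering.

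For the semi-difference, the agreeing coordinate $a$ cancels. On the other two edges $b,c$ we have $s'=-s$, so $(p(v,s)-p(v,s'))/2 = s(b)e(b)+s(c)e(c)$. There are twelve ordered pairs $(s,s')$ with $s\ne s'$. For fixed $s$, the three choices of $s'$ select the three complementary pairs $\{b,c\}$, and the signs on $\{b,c\}$ are those of $s$. Given $\{b,c\}$ and target signs $(\epsilon_b,\epsilon_c)$, the condition $s(a)s(b)s(c)=1$ determines $s$ uniquely. The agreement coordinate $a$ then determines $s'$ uniquely. So each of the twelve vectors $\pm e(b)\pm e(c)$ is hit exactly once, and swapping $s$ and $s'$ negates the result.

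There is no real obstacle here. The only point requiring care is the bijectivity count, which reduces to the observation that distinct admissible sign vectors differ in exactly two places; everything else is a short computation.
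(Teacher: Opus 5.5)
Your proposal is correct and takes the same approach as the paper: the paper gives no argument beyond calling this a straightforward check, and your direct computation supplies that check. Your observation that distinct admissible sign vectors differ in exactly two coordinates, because their coordinatewise product is a non-trivial admissible sign vector, is what makes both counts (six unordered pairs and twelve ordered pairs) come out as bijections.
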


It will be used in what follows,
together with the following result.

\begin{proposition} \label{Lemma-exactly-twice}
Let~$v \in N$ be a point in the lattice~$N$.
If there are rays~$p,q$ and~$p',q'$, such that the pairs~$\{p,q\}$ and~$\{p',q'\}$
are distinct, and~$p + q = 2v = p' + q'$, then for any pair of rays
$p'',q''$ such that~$2v = p'' + q''$ the pair~$\{p'',q''\}$ coincides
either with~$\{p,q\}$ or with~$\{p',q'\}$. In other words,
if some lattice point can be represented as semi-sum of two rays in
at least two different ways, then it can be represented so in exactly two ways.
\end{proposition}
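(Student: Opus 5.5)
The plan is to classify completely the lattice points of the form $(p+q)/2$ with $p,q$ rays, allowing $p=q$. It will turn out that every such point has either exactly one or exactly two representations, which is the claim. Write $p=p(v,s)$ and $q=p(v',s')$. I split according to the relative position of $v$ and $v'$ in $\graph$, mirroring the case division of \cref{proposition:scalar-product}, and in each case compute the coordinates of $(p+q)/2$ in the basis $e(i)$. The key observation is that $N\subset\widetilde N$, so a lattice point $v\in N$ has integer coordinates. This lets me discard every case that produces a half-integer coordinate.

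\emph{Case $v,v'$ neither equal nor adjacent.} The supports of $p$ and $q$ are disjoint, so $(p+q)/2$ has six coordinates equal to $\pm\tfrac12$. It is not in $\widetilde N$.

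\emph{Case $v,v'$ adjacent along an edge $e$.} The supports meet only in $e$, since we excluded multiple edges. The two other edges at $v$ and the two other edges at $v'$ are four distinct edges, and each contributes a coordinate $\pm\tfrac12$, whatever the relation between $s(e)$ and $s'(e)$. So again the point is not integral.

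\emph{Case $v=v'$ and $s=s'$.} The semi-sum is the ray $p$ itself, with exactly three nonzero coordinates.

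\emph{Case $v=v'$ and $s\neq s'$.} By the preceding lemma the semi-sum is $\pm e(i)$ for an edge $i$ incident to $v$. The lemma also says that each of the six values $\pm e(i),\pm e(j),\pm e(k)$ occurs for exactly one unordered pair $\{s,s'\}$ at $v$.

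Consequently the integral semi-sums are of exactly two kinds: the rays, with support of size $3$, and the vectors $\pm e(i)$, with support of size $1$. These two kinds cannot coincide.

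\emph{A ray $p$.} It arises only from the case $v=v'$, $s=s'$, hence from the single pair $\{p,p\}$, using that the association $(v,s)\mapsto p(v,s)$ is injective.

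\emph{A vector $\pm e(i)$.} It arises only from a vertex incident to $i$. Since $\graph$ has no loops, $i$ has exactly two distinct endpoints $u\neq u'$, and at each endpoint the lemma gives exactly one pair. These two pairs are distinct, because the rays in them are attached to different vertices. Hence $\pm e(i)$ has exactly two representations.

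Putting the two kinds together: if a lattice point has two distinct representations, it must be some $\pm e(i)$, and then it has exactly the two representations coming from the endpoints of $i$.

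The only step needing care is the adjacent-vertex case. There one must use the standing assumptions (no multiple edges, no loops) to ensure that the four non-shared edges are genuinely distinct, so that no half-integer coordinates cancel or combine into integers. The rest is bookkeeping with the lemma on semi-sums.
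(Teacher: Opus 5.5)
Your proof is correct, but it takes a different route from the paper's.

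\textbf{Your route.} You use the hypothesis $v\in N\subseteq\widetilde N=\Z^E$ to discard every semi-sum of rays at two distinct vertices, since each such semi-sum has a coordinate $\pm\tfrac12$. You then count the same-vertex semi-sums using the lemma.

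\textbf{The paper's route.} The paper never uses integrality. It starts from an arbitrary relation $p(u,x)+p(u',y)=p(v,z)+p(v',w)$ and shows that the parity of the number of occurrences of a vertex is constant along edges. It then uses the standing assumption that every component has at least $5$ vertices to force each vertex to occur an even number of times. Finally it treats the surviving configurations. Two vertices on opposite sides give $\pm e(i)=\pm e(j)$. Two vertices on both sides give an equality of semi-differences, which would force a double edge.

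\textbf{What yours buys.} Your argument is shorter and needs only the absence of loops and multiple edges. So it also justifies the paper's $K_4$ example, where the parity step genuinely fails. In the coordinates of that example, $(1,1,1,0,0,0)+(-1,0,0,1,-1,0)=(0,0,1,0,-1,-1)+(0,1,0,1,0,1)$ involves four distinct vertices. Its semi-sum is not integral, so it does not affect your proof.

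\textbf{What the paper's buys.} The parity argument classifies all coincidences $p+q=p'+q'$ of distinct pairs, integral or not. This lets the reconstruction drop the lattice condition and be intrinsic to $P(\graph)$, since $\Z^E$ is not visible from the polytope. This matters because $2\Z^E\subseteq N$ and $N$ reduces mod $2$ to the cut space, so $e(i)\in N$ only when $i$ is a bridge. Hence for bridgeless graphs the hypothesis with $v\in N$ is never met. What your argument really establishes is the version with $v\in\widetilde N$, which is the one the worked example actually uses.
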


\begin{proof}
  Suppose that for some vertices \(u,u',v,v' \in V(\graph)\) and signs \(x,y,z,w\)
  there is a relation
  \begin{equation}
    p(u,x) + p(u',y) = p(v,z) + p(v',w)
  \end{equation}
  For each vertex consider the parity of the number of times it appears in the relation.
  The relation implies that for every edge \(i\) the parities of the adjacent vertices are equal.
  So the assigned parities of vertices are constant in every connected component.
  Since we have only four terms in a relation,
  the number of odd vertices is at most four,
  so in every connected component with at least~$5$ vertices all vertices are even.

  Thus either \(u=u'=v=v'\),
  but then the six semi-sums are all distinct,
  or there are two vertices and each appears twice in the relation.
  Hence, either each vertex appears only on one side of the equation,
  or both vertices appear on both sides.

  If each vertex appears on its own side
  then we have an equation \(\pm e(i) = \pm e(j)\)
  where \(i\) is an edge adjacent to \(u\)
  and \(j\) is an edge adjacent to \(v\),
  thus \(i=j\) and the vertices \(u,v\) are adjacent to each other.

  Finally, the equation
  \begin{equation}
    \label{ssd}
    p(v,x) + p(v',w) = p(v,y) + p(v',z)
  \end{equation}
  is equivalent to \(p(v,x) - p(v,y) = p(v',z) - p(v',w)\)
  (where the differences are zero if and only if the summands on both sides are identical).
  For every vertex \(v\) with incident edges \(i,j,k\) the \(12\) semi-differences are equal to
  \begin{equation}
    \pm e(i)\pm e(j),\pm e(i)\pm e(k),\pm e(j)\pm e(k).
  \end{equation}
  So if for two distinct vertices \(v,v' \in V(\graph)\) and some sign choices \(x,y,z,w\)
  the relation \eqref{ssd} holds,
  then the vertices \(v\) and \(v'\) would be connected by at least two edges,
  contradicting our assumption about trivalent graphs.
\end{proof}

\section{Reconstruction of graphs from polytopes}\label{section:reconstruction}

The reconstruction is provided by the following procedure,
which a priori is defined for any polytope~$P$,
but will not work on polytopes not associated with any graph.
Recall that we refer to the vertices of a polytope as \emph{rays},
to avoid ambiguity with the vertices of a graph.

\begin{enumerate}
  \item Consider the rays~$r_1,\dots, r_n$ of the polytope~$P$ and find the center of mass~$c_0=\frac{1}{n}(r_1+\dots+r_n)$ of its rays.
    Let~$N_\R$ be a tangent space to the polytope at the interior point~$c_0$,
    and~$N\subset N_\R$ be its abelian subgroup generated by rays.
    If the polytope is associated with some graph,
    then~$N$ has to be a discrete lattice of full rank.
  \item
    Consider the multi-set~$\mathcal{S}$ of semi-sums of pairs of distinct rays
    which are lattice points,
    i.e.,~$\frac{1}{2}(p+q)$ for all~$p,q\in P$,
    provided that the semi-sum is in~$N$.
    Mark every semi-sum which appears at least twice.
  \item\label{enumerate:marking}
    For every marked semi-sum in~$\mathcal{S}$,
    consider it as a basis vector~$e(i)$ in the real vector space spanned by
    the semi-sums which appear at least twice,
    subject to the condition that
    if a semi-sum and its negative appear
    you assign them as~$e(i)$ and~$-e(i)$.
    The choice of sign will be irrelevant.
  \item Choose a basis among the~$e(i)$,
    by picking a sign for each pair of a vector and its negative.
  \item Let~$p(i)$ be the coordinate of the ray~$p\in P$.
\end{enumerate}
Then define the graph~$\graph'=(V',E')$, where
\begin{description}
  \item[vertices]~$V'$ is defined as the rays of the polytope
    modulo the equivalence relation which says that
    two rays are equivalent if in the basis given by the~$e(i)$
    they differ only up to signs;
  \item[edges]~$E'$ is given by the basis indexed by the~$e(i)$,
    and two vertices~$p$ and~$q$ are connected by an edge~$e(i)$
    if~$p(i)\neq0$ and~$q(i)\neq 0$.
\end{description}

Let us illustrate the procedure in the following toy example of genus~3.
Although the graph contains only~$4$ vertices,
the procedure still works.
Moreover, it is the unique example of a trivalent graph
without leaves, loops or multiple edges,
that has less than~$6$ vertices.
\begin{example}
  Consider the trivalent graph
  \begin{equation}
    \begin{tikzpicture}[scale=1.75,baseline=(current bounding box.center)]
      \node[vertex] (A) at (0,0) {};
      \node[vertex] (B) at (2,0) {};
      \node[vertex] (C) at (1,1.4) {};
      \node[vertex] (D) at (1,0.5) {};
      \draw (A) +(0,-0.1) node [below] {$1$};
      \draw (B) +(0,-0.1) node [below] {$2$};
      \draw (C) +(0,0.1) node  [above] {$3$};
      \draw (D) +(0,-0.1) node [below] {$4$};
      \draw (A) edge node [below] {$a$} (B);
      \draw (A) edge node [above] {$c$} (C);
      \draw (A) edge node [above] {$b$} (D);
      \draw (B) edge node [above] {$d$} (D);
      \draw (B) edge node [right] {$e$} (C);
      \draw (C) edge node [right] {$f$} (D);
    \end{tikzpicture}
  \end{equation}
  of genus~3, which thus has~4 vertices and~6~edges.
  For each vertex in the graph we consider the~4 possible sign choices,
  leading to the following~16 rays in the polytope:
  \begin{equation}
    \left\{
      \begin{gathered}
        ( 1, 1, 1,0,0,0) \\
        ( 1,-1,-1,0,0,0) \\
        (-1, 1,-1,0,0,0) \\
        (-1,-1, 1,0,0,0)
      \end{gathered}
    \right.
    ,\quad
    \left\{
      \begin{gathered}
        ( 1,0,0, 1, 1,0) \\
        ( 1,0,0,-1,-1,0) \\
        (-1,0,0, 1,-1,0) \\
        (-1,0,0,-1, 1,0)
      \end{gathered}
    \right.
    ,\quad
    \left\{
      \begin{gathered}
        (0,0, 1,0, 1, 1) \\
        (0,0, 1,0,-1,-1) \\
        (0,0,-1,0, 1,-1) \\
        (0,0,-1,0,-1, 1)
      \end{gathered}
    \right.
    ,\quad
    \left\{
      \begin{gathered}
        (0, 1,0, 1,0, 1) \\
        (0, 1,0,-1,0,-1) \\
        (0,-1,0, 1,0,-1) \\
        (0,-1,0,-1,0, 1)
      \end{gathered}
    \right..
  \end{equation}
  Considering the semi-sums of pairs of distinct rays,
  such that the semi-sum is a lattice point in the standard lattice,
  we obtain the following list,
  where we have already indicated a choice of basis.
  \begin{equation}
    \left\{
      \begin{aligned}
        e_1 &:=(1, 0, 0, 0, 0, 0), \\
        e_2 &:=(0, 1, 0, 0, 0, 0), \\
        e_3 &:=(0, 0, 1, 0, 0, 0), \\
        -e_3&:=(0, 0, -1, 0, 0, 0), \\
        -e_2&:=(0, -1, 0, 0, 0, 0), \\
        -e_1&:=(-1, 0, 0, 0, 0, 0)
      \end{aligned}
    \right.
    ,\quad
    \left\{
      \begin{aligned}
        e_1 &:=(1, 0, 0, 0, 0, 0), \\
        e_4 &:=(0, 0, 0, 1, 0, 0), \\
        e_5 &:=(0, 0, 0, 0, 1, 0), \\
        -e_5&:=(0, 0, 0, 0, -1, 0), \\
        -e_4&:=(0, 0, 0, -1, 0, 0), \\
        -e_1&:=(-1, 0, 0, 0, 0, 0)
      \end{aligned}
    \right.
    ,\quad
    \left\{
      \begin{aligned}
        e_3 &:=(0, 0, 1, 0, 0, 0), \\
        e_5 &:=(0, 0, 0, 0, 1, 0), \\
        e_6 &:=(0, 0, 0, 0, 0, 1), \\
        -e_6&:=(0, 0, 0, 0, 0, -1), \\
        -e_5&:=(0, 0, 0, 0, -1, 0), \\
        -e_3&:=(0, 0, -1, 0, 0, 0)
      \end{aligned}
    \right.
    ,\quad
    \left\{
      \begin{aligned}
        e_2 &:=(0, 1, 0, 0, 0, 0), \\
        e_4 &:=(0, 0, 0, 1, 0, 0), \\
        e_6 &:=(0, 0, 0, 0, 0, 1), \\
        -e_6&:=(0, 0, 0, 0, 0, -1), \\
        -e_4&:=(0, 0, 0, -1, 0, 0), \\
        -e_2&:=(0, -1, 0, 0, 0, 0)
      \end{aligned}
    \right.
   \end{equation}
   It is now clear that we can reconstruct the graph using the procedure outlined above,
   as the coordinates of the points in our chosen basis encode the adjacency relation.
\end{example}

\paragraph{The proof of correctness}
Let us now explain how the procedure
reconstructs the graph~$\graph$ from the polytope~$P(\graph)$.

If a vertex \(v\in V(\graph)\) is incident to edges \(i,j,k\)
then the \(6\) semi-sums of two vectors from a quadruple
are equal to \(\pm e(i),\pm e(j),\pm e(k)\).
So if \(i\) is an (internal non-loop) edge connecting vertices \(v\) and \(v'\)
there are at least two distinct ways to write \(e(i)\) as a semi-sum of two vectors:
\begin{equation}
  2 e(i) = p(v,x) + p(v,y) = p(v',z') + p(v',w'),
\end{equation}
and similarly
\begin{equation}
  -2 e(i) = p(v,z) + p(v,w) = p(v',x') + p(v',y').
\end{equation}

Now \cref{Lemma-exactly-twice} shows that no other vector has two representations as a semi-sum,
and finishes the proof of correctness of the algorithm.

\begin{proof}[Proof of \cref{theorem:main}]
The reconstruction procedure uniquely recovers the graph~$\graph$
from its polytope~$P = P(\graph)$,
thus proving \cref{theorem:main}.
\end{proof}
\section{Graph potentials}\label{section:gp}
In this section, we recall a construction
of a class of Laurent polynomials,
so-called \emph{graph potentials},
introduced in \cite{gp-tqft,gp-sympl,gp-decomp}.
We slightly simplify and expand the notation of loc.~cit.,
and collect some of their basic properties.

Let~$\graph$ be a trivalent graph of genus~$g$ and let~$V$ be its set of vertices.
Consider the lattice~$\widetilde{N}_{\graph}=\Z^E\cong\Z^{3g-3}$ as in
\cref{section:polytopes} and \cite[\S2.1]{gp-tqft}.
The graph potential~$\widetilde{W}_{\graph}$
associated to~$\graph$ is defined by
\begin{equation}
\label{equation:important-graph-potential}
\widetilde{W}_{\graph}
:=
\sum_{v\in V}\sum_{\substack{s_i,s_j,s_k\in\{\pm1\} \\ s_i s_j s_k = 1}}x_i^{s_i}x_j^{s_j}x_k^{s_k}
= \sum_{v,s} z^{p(v,s)}
\end{equation}
where~$x_i, x_j, x_k$ are variables attached to the edges~$e_i,e_j,e_k$
incident to the vertex~$v \in V$,
and $s = (s_i,s_j,s_k)$.
Vectors $p(v,s) = s_i e_i + s_j e_j + s_k e_k \in \Z^{E}$
are defined by \eqref{equation:rays},
and the sum is over all vertices $v$ and all admissible signs $s$;
note that admissibility coincides with \eqref{equation:sign-choice-no-coloring}.
Equivalently, $\phi_\graph(\theta)$ and $M_\graph(u)$ are defined by
\begin{equation}
\label{equation:important-graph-potential-phi-pvs}
 \phi_\graph(\theta) = \sum_{v,s} \exp(\mathrm{i}\langle p(v,s),\theta\rangle),
\end{equation}
\begin{equation}
\label{equation:important-graph-potential-M-pvs}
 M_\graph(u) = \sum_{v,s} \exp(\langle p(v,s),u\rangle).
\end{equation}

\begin{remark}
Recall that the vectors $p(v,s)$ generate a proper sublattice $N_\graph$ in $\widetilde{N}_\graph = \Z^E$.
So graph potentials $\widetilde{W}_\graph$ are associated with non-irreducible random walks.
In contrast, graph potentials $W_\graph$ are defined by restricting to $N$,
which corresponds to taking the quotient of the torus with coordinates $\theta$.
As we discussed in the introduction such restriction affects neither the shape of Newton polytopes, nor period sequences,
so we mostly ignore the difference between the irreducible potential $W_\graph$
and its non-irreducible counterpart $\widetilde{W}_\graph$
for the purposes of this paper.
In other papers,
$W_\graph$ is a proposed mirror dual to the $\mathrm{SU}(2)$-character variety,
whereas $\widetilde{W}_\graph$ is proposed mirror dual to the $\mathrm{PSU}(2) = \mathrm{SO}(3,\mathbb{R})$-character variety,
considered as an orbifold.
\end{remark}

The first observation is that the qCG polytope of \cref{section:polytopes}
is precisely the Newton polytope of the graph potential.

\begin{proposition}
\label{proposition:newton-polytope-qcg}
The Newton polytope of Laurent polynomials $\widetilde{W}_\graph$ and $W_\graph$
is the quantum Clebsch--Gordan polytope $P(\graph) \subset N_\graph \otimes \R$
defined in \cref{section:polytopes}. In particular,
the Newton polytope is $(3g-3)$-dimensional,
and the origin lies strictly inside of it.
\end{proposition}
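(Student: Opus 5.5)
The plan splits the statement into three claims: the identification of the Newton polytope, its dimension, and the position of the origin.

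\emph{Newton polytope.} By \eqref{equation:important-graph-potential}, $\widetilde{W}_\graph = \sum_{v,s} z^{p(v,s)}$ is a sum of monomials with coefficient $1$. The first step is to check that there is no cancellation or merging of monomials. This follows from the corollary to \cref{proposition:scalar-product}: the vectors $p(v,s)$ are pairwise distinct. Hence $\Supp(\widetilde{W}_\graph)=\{p(v,s)\}$, and its convex envelope is $P(\graph)$ by definition. The passage to $W_\graph$ is only a rewriting of the same distribution on the sublattice $N_\graph$ generated by the $p(v,s)$. Under the identification $N_\graph\otimes\R=\widetilde N_\graph\otimes\R$, which is an isomorphism because $N_\graph$ has finite index, the Newton polytope is unchanged, as noted in the introduction.

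\emph{Dimension.} The second step is to show that the $p(v,s)$ span $\R^E$, which has dimension $|E|=3g-3$. By the semi-sum lemma preceding \cref{Lemma-exactly-twice}, for a vertex $v$ with incident edges $i,j,k$ the semi-sums $(p(v,s)+p(v,s'))/2$ realize $\pm e(i),\pm e(j),\pm e(k)$. Every edge is incident to some vertex, since the graph is trivalent without leaves. So every $e(i)$ lies in the linear span of the rays, the span is all of $\R^E$, and $P(\graph)$ is full dimensional. This step also justifies that $N_\graph$ has finite index in $\widetilde{N}_\graph$.

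\emph{Origin in the interior.} By \cref{lemma:center}, the origin is the barycenter of all the rays with equal positive weights. The key fact is that in a full-dimensional polytope, a strictly positive convex combination of all the vertices lies in the interior. To see this, suppose $0$ lay on a proper face. Then some nonzero linear functional $\ell$ would satisfy $\ell\le 0$ on $P(\graph)$ and $\ell(0)=0$. Since $\sum \ell(p(v,s))=0$ with every term nonpositive, all terms vanish. Then all rays lie in the hyperplane $\ell=0$, contradicting the spanning statement. Alternatively, one can argue directly: $\pm e(i)\in P(\graph)$ as semi-sums of rays, so $P(\graph)$ contains the cross-polytope $\operatorname{conv}\{\pm e(i)\}$, which contains a neighbourhood of $0$.

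The only point needing care is the distinctness of the monomials, so that no coefficients collide. Since that is already supplied by the corollary to \cref{proposition:scalar-product}, I expect no real obstacle.
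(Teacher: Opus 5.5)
Your proof is correct and follows the paper's route: the paper notes that the support of $W_\graph$ is the set of all $p(v,s)$, whose convex envelope is $P(\graph)$ by definition, and attributes the dimension and interiority claims to \cref{section:polytopes}; you make these explicit via the semi-sum lemma (giving $\pm e(i)\in P(\graph)$) and \cref{lemma:center}. One small remark: distinctness of the $p(v,s)$ is not actually needed for the support computation, since all coefficients are $+1$ and so no cancellation can occur.
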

\begin{proof}
This fact follows immediately from the definitions.
The support of $W_\graph$ is the set of all $p(v,s)$,
and $P(\graph)$ by definition is the convex envelope of all $p(v,s)$.
The properties of qCG polytopes are established in \cref{section:polytopes}.
\end{proof}

Combined with the reconstruction theorem of \cref{section:reconstruction},
this turns the Newton polytope into a complete invariant of the graph.

\begin{corollary}
\label{cor:distinct-shape}
For any pair of non-isomorphic graphs $\graph$ and $\graph'$
the shapes of the Newton polytopes of their graph potentials
$W_\graph$ and $W_{\graph'}$ are distinct.
\end{corollary}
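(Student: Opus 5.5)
The plan is to combine \cref{proposition:newton-polytope-qcg} with \cref{theorem:main}, arguing by contraposition.

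Suppose $\graph$ and $\graph'$ are graphs satisfying the standing assumptions of \cref{section:graphs}, and suppose the Newton polytopes of $W_\graph$ and $W_{\graph'}$ have the same shape. By this we mean there is a real affine isomorphism between the ambient spaces carrying one polytope onto the other.

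\begin{enumerate}
\item By \cref{proposition:newton-polytope-qcg}, these Newton polytopes are exactly $P(\graph)$ and $P(\graph')$. This holds whether we regard the potentials on $\widetilde N$ or on $N$, since restricting to $N$ does not change the real polytope. So $P(\graph)$ and $P(\graph')$ are affinely isomorphic.
\item The reconstruction procedure of \cref{section:reconstruction} uses only affine-invariant data: the rays, their center of mass, the lattice they generate, and which semi-sums of pairs of rays occur at least twice. The output graph is defined purely from this data.
\item Hence an affine isomorphism $P(\graph)\to P(\graph')$ induces a bijection of rays and semi-sum multisets. Consequently the reconstructed graphs are isomorphic.
\item By the correctness argument, relying on \cref{Lemma-exactly-twice}, these reconstructed graphs are $\graph$ and $\graph'$ respectively.
\item Therefore $\graph\cong\graph'$. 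This is exactly the ``only if'' direction of \cref{theorem:main}, and contraposition gives the corollary.
\end{enumerate}

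The one point needing care is step 3, the invariance of the procedure under the affine map. It works as follows.
\begin{itemize}
\item An affine bijection of polytopes maps vertices to vertices, so it maps rays to rays.
\item It sends the center of mass of the rays to the center of mass. That center is the origin by \cref{lemma:center}, so the map is in fact linear.
\item A linear map preserves the relation $p+q=p'+q'$, so it preserves which lattice semi-sums have two representations.
\item The ``differ only up to signs in the basis $e(i)$'' relation is defined through these marked semi-sums, so it is also preserved.
\end{itemize}

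I do not expect any genuine obstacle here, since the statement is essentially a reformulation of \cref{theorem:main}.
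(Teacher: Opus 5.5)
Your proposal is correct and is essentially the paper's proof, which combines \cref{proposition:newton-polytope-qcg} with \cref{theorem:main} to recover $\graph$ and $\graph'$ from their Newton polytopes. Your added check that an affine isomorphism fixes the common barycenter $0$ makes explicit what the paper leaves implicit in the ``in particular'' clause of \cref{theorem:main}: such a map is therefore linear and carries rays, and doubly-represented semi-sums, to one another.
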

\begin{proof}
By \cref{theorem:main} (proved in \cref{section:reconstruction})
the graphs $\graph$ and $\graph'$ can be reconstructed
from their Newton polytopes $\NP(W_\graph)$ and $\NP(W_{\graph'})$.
\end{proof}

Beyond the combinatorial information carried by their Newton polytopes,
graph potentials enjoy a basic analytic property:
the origin is a critical point of the (logarithmic) moment generating function.
This will be used in \cref{section:random-walks} to interpret~$\widetilde{W}_\graph$
as the characteristic function of a mean-zero random walk.

\begin{proposition}
\label{prop:gp-is-meanzero}
For any graph $\graph$ the point $u=0 \in \R^E$ (resp. $z=1$)
is a critical point of $M_\graph$
(resp. $\widetilde{W}_\graph$ and $W_\graph$).
\end{proposition}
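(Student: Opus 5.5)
The plan is to compute the gradient of $M_\graph$ at $u=0$ directly and reduce its vanishing to \cref{lemma:center}. Differentiating \eqref{equation:important-graph-potential-M-pvs} term by term gives
\begin{equation}
  \nabla M_\graph(u) = \sum_{v,s} p(v,s)\, \exp(\langle p(v,s),u\rangle),
  \qquad
  \nabla M_\graph(0) = \sum_{v\in V}\sum_{s} p(v,s).
\end{equation}
By \cref{lemma:center} this sum is zero. More precisely, the relation \eqref{equation:linear-relation} shows that it already vanishes vertex by vertex: for each $v$ with incident edges $i,j,k$, the $e(i)$-coordinate of the four admissible sign vectors is $+1$ twice and $-1$ twice. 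Hence $u=0$ is a critical point of $M_\graph$.

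Next we transfer the statement to $\widetilde{W}_\graph$ at $z=1$ via the substitution $z=\exp(u)$, since $M_\graph(u)=\widetilde{W}_\graph(\exp u)$. The chain rule gives $u_i\,\partial/\partial u_i$-type identities: $\partial_{u_i} M_\graph(u) = z_i\,\partial_{z_i}\widetilde{W}_\graph(z)$ with $z_i = e^{u_i}$. At $u=0$ we have $z_i=1$, so $\partial_{z_i}\widetilde{W}_\graph(1)=\partial_{u_i}M_\graph(0)=0$ for every edge $i$. Alternatively, one can argue directly: $z_i\partial_{z_i}\widetilde{W}_\graph=\sum_{v,s} p(v,s)_i z^{p(v,s)}$, which evaluates at $z=1$ to the $i$-th coordinate of the same vanishing sum.

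Finally, for $W_\graph$ we use that it is the same Laurent polynomial written in a basis of the sublattice $N_\graph$. Choosing a basis of $N_\graph$ amounts to an integral linear change of the variables $u$, namely the restriction to $N_\graph\otimes\R$ together with a linear isomorphism, which fixes $0$; equivalently, the unit $z=1$ of one torus maps to the unit of the other. Critical points are preserved under such changes of coordinates and the isogeny of tori, so $1$ is a critical point of $W_\graph$ as well. Written intrinsically, the logarithmic derivative of $W_\graph$ at $1$ is the vector $\sum_{v,s}p(v,s)\in N_\graph$, which vanishes.

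None of these steps is a real obstacle. The only point needing care is the bookkeeping between the coordinates $u$, $z$, and the sublattice $N_\graph$, which I would handle by phrasing everything through the intrinsic logarithmic differential $\sum_{v,s}p(v,s)\,z^{p(v,s)}$.
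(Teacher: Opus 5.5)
Your proposal is correct and follows the paper's proof. The paper likewise computes $\nabla M_\graph(0)=\sum_{v,s}p(v,s)$ and kills it vertex by vertex using \cref{lemma:center}. Your transfer to $\widetilde{W}_\graph$ and $W_\graph$ through $\partial_{u_i}=z_i\partial_{z_i}$ and the isogeny of tori is sound, and it makes explicit what the paper's proof leaves implicit.
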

\begin{proof}
For any finite collection of vectors $p_1,\ldots,p_n \in \R^E$,
the derivative of $\sum_{i=1}^n \exp{\langle p_i,u\rangle}$ at $u=0$
equals $\sum_{i=1}^n p_i$.
Since $M_\graph(u) = \sum_{v\in V}\sum_s \exp{\langle p(v,s),u\rangle}$,
where the inner sum is over admissible sign tuples~$s$,
we have
\begin{equation}
  \nabla M_\graph(0) = \sum_{v\in V}\sum_s p(v,s) = \sum_{v\in V} 0 = 0,
\end{equation}
where we used the equality $\sum_s p(v,s) = 0$ for each vertex~$v$,
established in \cref{lemma:center}.
\end{proof}

\paragraph{Change of coordinates}\label{section:changeofcoordinates}
We now turn to part~\cref{enumerate:principal-equal-invariants} of \cref{theorem:principal},
and show that the graph potentials defined above
produce the same periods, density-of-states functions and other integral invariants~$K_p(t)$,~$\zeta_p(s)$.

We say that two trivalent graphs~$\graph$ and~$\graph'$ are related by an elementary transformation at an edge
if they differ by a local surgery at a neighbourhood of an edge
as shown in \cref{figure:elementary-transformation-without-colors}.
Namely, if half edges~$a, b, c, d$ (not all distinct) are adjacent to the edge~$x$
in the two vertices~$v_1,v_2$ in pairs~$\{a,b\}$ and~$\{c,d\}$ in the graph~$\graph$,
then they meet the edge in pairs~$\{a,c\}$ and~$\{b,d\}$ in the graph~$\graph'$.
Another elementary transformation will be given by considering instead the pair~$\{a,d\}$ and~$\{b,c\}$.
Geometrically, they might look different, but this difference plays no extra role for our purpose.

\begin{figure}[t!]
  \centering

  \begin{tikzpicture}[scale=1]
  \node[vertex] (first)  at (0,0) {};
  \node[vertex] (second) at (1,0) {};
  \node[draw] (A) at (-1,1)  {};
  \node[draw] (B) at (-1,-1) {};
  \node[draw] (C) at (2,1)   {};
  \node[draw] (D) at (2,-1)  {};

  \draw (first)  +(0,-0.1) node [below] {$v_1$};
  \draw (second) +(0,-0.1) node [below] {$v_2$};

  \draw (A)     edge node [above] {$a$} (first);
  \draw (B)     edge node [below] {$b$} (first);
  \draw (first) edge node [above] {$x$} (second);
  \draw (C)     edge node [above] {$c$} (second);
  \draw (D)     edge node [below] {$d$} (second);

  \node at (3,0) {$\mapsto$};

  \node[vertex] (first)  at (5,0.5) {};
  \node[vertex] (second) at (5,-0.5) {};
  \node[draw] (A) at (4,1.5)  {};
  \node[draw] (B) at (4,-1.5) {};
  \node[draw] (C) at (6,1.5)  {};
  \node[draw] (D) at (6,-1.5) {};

  \draw (first)  node [left] {$v_1$};
  \draw (second) node [left] {$v_2$};

  \draw (A)     edge node [above] {$a$} (first);
  \draw (B)     edge node [below] {$b$} (second);
  \draw (first) edge node [right] {$y$} (second);
  \draw (C)     edge node [above] {$c$} (first);
  \draw (D)     edge node [below] {$d$} (second);
  \end{tikzpicture}
  \caption{Elementary transformation without colors}
  \label{figure:elementary-transformation-without-colors}
\end{figure}

These transformations are also known as Hatcher--Thurston moves after \cite{MR0579573},
where it is proved that any two colored trivalent graphs can be related by a sequence of elementary transformations of edges. With this set-up it is shown in Section 2 of \cite{gp-tqft}, that if~$\graph$ and~$\graph'$ are related by an elementary transformation of edges as shown in \cref{figure:elementary-transformation-without-colors}, then the following rational transformation:
\begin{equation}
  x\cdot (ab+cd)=y\cdot (ac+bd)
\end{equation}
relates the graph potentials~$W_{\graph}$ and~$W_{\graph'}$.
We refer the reader to \cite[Theorem 2.12]{gp-tqft} for a precise statement.

\begin{proof}[Part~\cref{enumerate:principal-equal-invariants} of \cref{theorem:principal}]
  For~$K_p(t)$ and~$\zeta_p(s)$
  the result follows from a change of variables formula
  for smooth forms on $\R^E$.

  If~$v=F(u)$ is a diffeomorphism of~$\R^d$, then the absolute value of the Jacobian~$|\det \mathrm{D}F|$ is one.
  This is because in logarithmic coordinates on $\R^d$
  ($X = \log x$, $Y = \log y$, etc) the transformation formulas take the form
  \begin{equation}
    X = Y + f(A,B,C,D),
  \end{equation}
  where~$X, A,B,C,D,Y$ are variables on~$\R^d$, and
  \begin{equation}
    f = \log{(\exp(A+C)+\exp(B+D))}-\log{(\exp(A+B)+\exp(C+D))}
  \end{equation}
  is a smooth function on $\R^d$.
  Hence taking derivatives on both sides with respect to~$X$ gives
  \begin{equation}
    dX= dY + df(A,B,C,D).
  \end{equation}
  Thus we get that~$\frac{\partial Y}{\partial X}=1$.
  Since the transformation in the other variables is the identity, we get that
  $dA \wedge dB \wedge dC \wedge dD \wedge dX = dA \wedge dB \wedge dC \wedge dD \wedge dY$,
  hence the Jacobian is~$1$.

  Then by the change of variables formula, we get the following equalities:
  \begin{equation}
    \int_{u\in \R^d}\exp(-tM_{\graph}(u))\dd u=\int_{v\in \R^d}\exp(-tM_{\graph'}(v))\dd v
  \end{equation}
  and
  \begin{equation}
    \int_{u\in\R^d} M_{\graph}(u)^{-s} \dd u= \int_{v\in\R^d} M_{\graph'}(v)^{-s} \dd v,
  \end{equation}
  where~$M_{\graph}(F(u))=M_{\graph'}(v)$.

  For the period sequence~$\Pi=(a_m)_{m=0,1,\ldots}$ the same change of variables~$F$
  extends to complex arguments, but the induced map between the complex tori is only
  birational rather than a diffeomorphism. A change-of-variables formula at this level
  is established in \cite[Lemma~2.17 and Corollary~2.18]{gp-tqft}, applied to the graph
  potentials~$W_\graph$ and~$W_{\graph'}$ of two trivalent graphs related by an elementary
  transformation, and yields~$a_m(W_\graph)=a_m(W_{\graph'})$ for all~$m\geq 0$.
\end{proof}
\def\W{\overline{W}}

\section{Random walks on lattices}
\label{section:random-walks}

In this section we translate the setting to that of
finite range lattice random walks,
and explain how graph potentials of \cref{section:gp}
yield interesting examples of random walks,
whose ``shapes'' one cannot ``hear''
due to combinatorial reconstruction
of \cref{section:reconstruction}
and behavior of integrals
from \cref{section:gp}.
This translation was already considered in \cite{galkin-split-notes},
as a way to go back and forth between
random walks
and objects originating from mirror symmetry. 
We refer the reader to \cite[\S1.1]{MR2677157} for the definition of random walks.

We first explicitly discuss how a Laurent polynomial can be thought of as a random walk.

\begin{definition}Consider a Laurent polynomial $W$ 
  \begin{equation}
  \label{equation:random-walk-function}
  W(z_1,\dots, z_d) = \sum_{v\in \Z^d} p_v z^{v},
\end{equation}
in the ring of Laurent polynomials~$\R[N]$
associated to the lattice~$N \cong \Z^d$
in the~$d$\dash dimensional real vector space~$N_\R\cong\R^d$.
   satisfying the following assumptions:
  \begin{itemize}
    \item~$\sum_{v\in \Z^d} p_v=1$ and~$p_v\geq 0$ for all~$v\in \Z^d$;
    \item the origin~$0\in N$ is an interior point of~$P$.
  \end{itemize}
  The {\em associated random walk on the lattice~$N$} is defined in the usual way:
  the number $p_v$ is the probability to make a step in direction $v$.
\end{definition}

These are the normalised conditions on a Laurent polynomial
as considered in \cite{1404.7388,galkin-split-notes}.

Recall from the introduction the notion of Newton polytope associated to a Laurent polynomial
or random walk.
We make the following observation.

\begin{remark}
The vertices of the Newton polytope form the generating set for a random walk,
provided there are no other lattice points on the polytope other than the vertices.
This condition will hold for our irreducible walks, but it is not essential.
\end{remark}

Finally,
recall from the introduction that~$a_m$ is the return probability
\begin{equation}
  a_m:=P(S_m=0\mid S_0=0),
\end{equation}
and equals the constant term~$[W^m]_0$ of the~$m$th power of~$W$;
see \cite[Corollary~2.2.3]{MR2677157} and \cite[\S2.3]{gp-tqft}.

\paragraph{The conifold point and random walks}
The condition that a random walk associated to a Laurent polynomial
has mean zero and is \emph{non-degenerate}
(its steps do not lie in a proper affine subspace,
equivalently the covariance matrix is non-degenerate)
can be translated to a property of the Laurent polynomial~$W$
which was already considered in \cite{1404.7388,galkin-split-notes}.
We will now explain how this interpretation goes in the notation of op.~cit.,
which will set up a dictionary between random walks and Laurent polynomials
as they appear in mirror symmetry.

Let~$u_1,\ldots,u_d$ be coordinates on~$\C^d$.
The exponential map~$\C^d\rightarrow \left(\C^\times\right)^d$
is a topological covering map
and identifies~$\R^d \subset \C^d$ with~$\R^d_{+} \subset \left(\C^\times\right)^d$.
Let~$z_i=\exp(u_i)$ and we identify~$\R^d$ with~$\R^d_{+}$ via the exponential map.
Observe that the partial derivative~$\frac{\partial}{\partial u_i}$
coincides with~$z_i\frac{\partial }{\partial z_i}$ under this identification.

With this setup,
we can make the following elementary observation.
\begin{proposition}
  \label{proposition:balanced}
  Let~$W$ be a Laurent polynomial with associated random walk as above.
  The condition that it has mean zero,
  resp.~is non-degenerate,
  corresponds to
  \begin{itemize}
    \item the point~$c=(1,1,\ldots,1)$ (in exponential notation, i.e., the origin of~$N$)
    is the (unique) point of~$\R_+^d$
    for which all the logarithmic derivatives~$\partial W/\partial u_i$ vanish;
    \item the critical point~$c$ is a Morse point,
    i.e., its Hessian matrix~$(\frac{\partial^2W}{\partial u_i \partial u_j})_{i,j}$ is non-degenerate.
  \end{itemize}
\end{proposition}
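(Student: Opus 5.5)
The plan is to compute the first two logarithmic derivatives of $W$ on the positive real torus and compare them with the moments of the step distribution. Write $W(\exp(u)) = M(u) = \sum_v p_v \exp(\langle v,u\rangle)$ for $u\in\R^d$. Since $\partial/\partial u_i = z_i\,\partial/\partial z_i$ under the identification $z_i=\exp(u_i)$, one has
\begin{equation}
  \frac{\partial W}{\partial u_i}(u) = \sum_v p_v v_i \exp(\langle v,u\rangle),
  \qquad
  \frac{\partial^2 W}{\partial u_i\partial u_j}(u) = \sum_v p_v v_i v_j \exp(\langle v,u\rangle).
\end{equation}
At $u=0$ (that is, $c=(1,\dots,1)$) these are exactly the mean vector $\mathbb{E}[S_1]=\sum_v p_v v$ and the matrix of second moments $\sum_v p_v v v^{\mathrm{t}}$. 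Hence the mean-zero condition is equivalent to $c$ being a logarithmic critical point.

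Next I show that such a critical point is unique in $\R^d_+$. The Hessian above is a positive combination of the rank-one matrices $v v^{\mathrm{t}}\exp(\langle v,u\rangle)$, so it is positive semidefinite, and $M$ is convex. It is positive definite precisely when the support of $p$ spans $\R^d$. This holds because the origin is an interior point of the Newton polytope. Therefore $M$ is strictly convex, and it has at most one critical point. Existence of a minimum also follows from the interior-point assumption, since $M(u)\to\infty$ as $|u|\to\infty$: in every direction $u$ some support vector has $\langle v,u\rangle>0$. So "mean zero" is equivalent to "$c$ is the unique logarithmic critical point in $\R^d_+$", which is the first item.

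For the second item, I use that at a mean-zero point the second-moment matrix coincides with the covariance matrix $\sum_v p_v v v^{\mathrm{t}} - (\mathbb{E}S_1)(\mathbb{E}S_1)^{\mathrm{t}}$. Non-degeneracy of the covariance means the steps do not lie in a proper affine subspace. This is then equivalent to the Hessian at $c$ being non-degenerate, i.e.\ $c$ being a Morse point.

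The only mildly delicate point is keeping straight "linear span" (which is what the Hessian of $M$ detects) versus "affine span" (which is what covariance detects). The two coincide exactly because the mean is zero, so I expect no real obstacle: every step is a direct computation.
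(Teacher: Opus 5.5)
Your proposal is correct and follows essentially the same route as the paper. You identify the logarithmic gradient at $c$ with the mean $\sum_v p_v v$ and the Hessian at $c$ with the covariance (using mean zero to equate second moments with covariance), and you get uniqueness of the critical point from strict convexity of $M$. The only difference is that you derive positive-definiteness of the Hessian directly from the origin lying in the interior of the Newton polytope, whereas the paper cites these facts from the literature.
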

The point~$c$ is thus the \emph{conifold point} studied in \cite{1404.7388,galkin-split-notes,MR4384381,2405.16979}.

\begin{proof}
  The first is a direct translation of the fact that the gradient of~$W$ evaluated at~$c$
  gives~$\sum_{v\in N}p_vv$, and thus describes the mean of the random walk.
  The second is another direct translation,
  by realising that the Hessian matrix of the Laurent polynomial
  corresponds to the covariance matrix of the associated random walk,
  which is invertible by \cite[Proposition~1.1.1]{MR2677157}.
  Uniqueness of a real positive critical point $u_0$
  comes for free, it is a corollary
  of convexity of $M(u)$ implied by positive-definiteness of covariance matrix,
  see \cite{1404.7388,galkin-split-notes}.
\end{proof}

\paragraph{The shape of a random walk}
Let~$N$ and~$N'$ be two lattices
and consider two random walks~$W$ and~$W'$ in~$N$ and~$N'$
satisfying the conditions of \cref{proposition:balanced}.
\begin{definition}
Let~$W$ and~$W'$ be finite range lattice random walks.
We say that they:
\begin{enumerate}
  \item have the \emph{same shape}
    if there exists a bijective linear map~$\psi\colon N_\R \rightarrow N'_\R$
    which maps the Newton polytope of~$W$ to the Newton polytope of~$W'$;
  \item have \emph{equal probabilities of return to the origin} (or \emph{same periods})
    if~$a_m(W)=a_m(W')$ for all~$m\in \bN$.
\end{enumerate}
\end{definition}

We ask the following question,
where throughout we assume that our random walks are as in \cref{proposition:balanced}.
\begin{question}
\label{question:fundamental}
Suppose two $d$-dimensional finite range lattice random walks~$W$ and~$W'$ have equal
probabilities of return to the origin.
Is it true that they are of the same shape?
\end{question}

Note that the existence of such isospectral but non-isomorphic structures
is a recurring theme in the modern spectral analysis of networks \cite{MR3237552}.

More generally,
if the answer to \cref{question:fundamental} is negative,
can we find a collection of random walks~$\{W_1,\dots, W_k\}$
which all have the same periods,
but pairwise they are of different shapes?
We will reinterpret our results from~\cref{section:reconstruction}
to answer \cref{question:fundamental} negatively,
producing arbitrarily large collections of such random walks.

\paragraph{Construction}
We give a construction of such mean-zero random walks
based on the graph potentials $\widetilde{W}_{\graph}$ and $W_\graph$
which we defined in \cref{section:gp} after \cite{gp-tqft}.
Consider the normalisation
\begin{equation}
  \label{equation:walk-construction}
  \W_{\graph} := \frac{1}{8(g-1)}\widetilde{W}_{\graph},
\end{equation}
so that we can consider~$\W_{\graph}$ as the defining data for a random walk:
it goes to one of $8g-8$ directions $p(v,s)$ with equal probability $\frac{1}{8g-8}$.
By \cref{prop:gp-is-meanzero} the random walk associated to~$\W_\graph$ is mean-zero,
and by construction its support generates~$N_\graph$ as a $\Z$-module
(see~\cite[\S2.1]{gp-tqft}), so the walk is irreducible, and in particular non-degenerate.
By \cite[Lemma~1.4]{galkin-split-notes},
the convexity of a positive Laurent polynomial
implies that it has a unique critical point on~$\R_+^d$, attained at the global minimum.
Together with \cref{proposition:balanced} this gives the following.
\begin{proposition}
  \label{proposition:conifold-property}
For each graph~$\graph$,
  the Laurent polynomial~$\W_{\graph}$ defines a random walk in the lattice~$N_{\graph}$
  which is mean-zero and irreducible,
  and it has the conifold property.
\end{proposition}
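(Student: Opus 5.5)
The plan is to verify the three claims of \cref{proposition:conifold-property} one at a time, each reduced to a result already established.

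First, $\W_\graph$ is a legitimate random walk on $N_\graph$. All coefficients of $\widetilde{W}_\graph$ equal $1$. The support consists of the $4|V| = 4(2g-2) = 8g-8$ pairwise distinct vectors $p(v,s)$ (distinct by the corollary to \cref{proposition:scalar-product}). Hence after the normalisation \eqref{equation:walk-construction} the coefficients are non-negative and sum to $1$. The origin is interior to the Newton polytope $P(\graph)$ by \cref{proposition:newton-polytope-qcg}, so the two conditions of the definition of an associated random walk hold.

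Second, mean-zero. The mean equals $\frac{1}{8g-8}\sum_{v,s} p(v,s)$, which vanishes by \cref{lemma:center}; equivalently one can cite \cref{prop:gp-is-meanzero}, since rescaling does not move critical points. For irreducibility, $N_\graph$ is by definition the lattice spanned by the $p(v,s)$, so the support generates $N_\graph$ as a group. To upgrade this to generation as a monoid, I will use that every $-p(v,s)$ is a sum of the other three rays in its quadruple, by \eqref{equation:linear-relation}. Therefore the monoid generated by the support contains all negatives of the generators, and so it is the whole group $N_\graph$.

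Third, the conifold property, meaning the two bullets of \cref{proposition:balanced}. Irreducibility implies the support is not contained in a proper affine subspace: it spans $N_\graph\otimes\R$ and contains $0$ in its convex hull, so its affine span is the full space. The covariance matrix is then positive definite, so the Hessian of $M$ at the origin is non-degenerate and the critical point is Morse. Uniqueness of the positive real critical point then follows from strict convexity of $M(u)=\sum \exp\langle p,u\rangle$, as in \cite[Lemma~1.4]{galkin-split-notes}: a strictly convex function has at most one critical point, and it is the global minimum.

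The only step that is not pure bookkeeping is passing from group generation to monoid generation, which the quadruple relation handles.
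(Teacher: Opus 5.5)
Your proposal is correct and follows the same route as the paper, which takes mean-zero from \cref{prop:gp-is-meanzero}, irreducibility and non-degeneracy from the support generating $N_\graph$, and the conifold property from the convexity lemma of \cite{galkin-split-notes} combined with \cref{proposition:balanced}. Your use of the quadruple relation \eqref{equation:linear-relation} to upgrade group generation to monoid generation fills in a step that the paper leaves implicit, since it only asserts generation as a $\Z$-module.
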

In particular, this proves part~\ref{enumerate:principal-zero-mean} of \cref{theorem:principal}.

\paragraph{Comparing periods and Newton polytopes}
Now we investigate the question of comparing periods
and Newton polytopes of graph potentials,
associated to different graphs (of a fixed genus).
We can reinterpret \cite[Corollary~2.9]{gp-tqft}  as follows.
\begin{proposition}
\label{prop:same-sound}
Let~$\graph$ and~$\graph'$ be two graphs of the same genus.
Then for all~$m$, the probabilities of return~$a_m$ to the origin after~$m$ steps of the associated random walks are the same.
\end{proposition}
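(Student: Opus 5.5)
The plan is to reduce the statement to the invariance of constant terms of powers of graph potentials under elementary transformations, which is \cite[Corollary~2.9]{gp-tqft}, or equivalently \cite[Lemma~2.17 and Corollary~2.18]{gp-tqft} as already used in the proof of part~\cref{enumerate:principal-equal-invariants} of \cref{theorem:principal}.

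\textbf{Step 1: reduce to constant terms.} By the discussion above, the return probability after $m$ steps of the walk $\W_\graph$ equals the constant term $[\W_\graph^m]_0 = (8g-8)^{-m}[\widetilde{W}_\graph^m]_0$. The normalising factor depends only on the genus, so it suffices to show $[\widetilde{W}_\graph^m]_0=[\widetilde{W}_{\graph'}^m]_0$ for all $m$. Passing between $\widetilde{W}_\graph$ on $\widetilde N=\Z^E$ and $W_\graph$ on $N_\graph$ does not change constant terms: restricting to a sublattice containing the support preserves which monomials are constant. So we may equally work with $W_\graph$.

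\textbf{Step 2: connect the graphs.} By Hatcher--Thurston \cite{MR0579573}, any two trivalent graphs of genus $g$ are connected by a finite sequence of elementary transformations as in \cref{figure:elementary-transformation-without-colors}. One must check that the intermediate graphs are allowed. Since constant terms of graph potentials are defined for general trivalent multigraphs, with loops and multiple edges permitted in \cite{gp-tqft}, this is harmless. By transitivity it suffices to treat a single elementary transformation at an edge $x$.

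\textbf{Step 3: one elementary transformation.} The potentials are related by the birational substitution $x(ab+cd)=y(ac+bd)$, with the other variables fixed. First, group the terms of $W_\graph$ involving $x$: the two vertices $v_1,v_2$ contribute
\[
x\Bigl(ab+\frac{1}{ab}\Bigr)+\frac1x\Bigl(\frac ab+\frac ba\Bigr)+\dots
\]
Rewriting these in the form of \cite[Theorem~2.12]{gp-tqft} shows that the substitution carries $W_\graph$ to $W_{\graph'}$ exactly. Next, take the torus $T=\{|z_i|=1\}$ and the residue form $\omega=\prod dz_i/z_i$, for which $[W^m]_0=\frac{1}{(2\pi\mathrm{i})^d}\int_T W^m\omega$. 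The substitution preserves $\omega$ up to sign, since $dx/x - dy/y$ is a form in the remaining variables, as in the Jacobian computation above. It remains to show that the cycle $T$ is homologous to its image in the complement of the indeterminacy and polar loci.

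\textbf{Main obstacle.} The hard point is this homology argument, because the map is only birational and the pole locus $ab+cd=0$ meets $T$. I would first try the standard approach from \cite{gp-tqft}: expand in the single variable $x$. Taking the constant term in $x$ of $W^m$, viewed as a Laurent polynomial in $x$ with coefficients in the other variables, yields a symmetric expression in $(ab+cd)(ac+bd)$-type products that is manifestly invariant under the swap, via $x\mapsto y$. Concretely, the $x$-dependent part is $\alpha x+\beta x^{-1}$ with $\alpha\beta$ symmetric under exchanging the roles of $b$ and $c$. Hence $[(\alpha x+\beta/x+\gamma)^m]_{x^0}$ depends only on $\alpha\beta$ and $\gamma$, and the $y$-side gives the same quantity. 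Taking the remaining constant terms then completes the proof.
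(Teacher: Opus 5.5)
Your proposal is correct. Its outer structure matches the paper's: the return probability is a constant term, the factor $(8g-8)^{-m}$ depends only on $g$, Hatcher--Thurston connects any two graphs, and it suffices to check invariance under one elementary move.

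The difference is the single-move step. The paper does not prove it. It reduces to the period case of part~\cref{enumerate:principal-equal-invariants} of \cref{theorem:principal}, which in turn cites from \cite{gp-tqft} a change-of-variables formula for the birational map $x(ab+cd)=y(ac+bd)$ between complex tori. That is exactly the cycle/indeterminacy issue you identify as the obstacle. Your fallback argument sidesteps it and is a complete proof on its own, so make it the argument and drop the residue discussion.

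You should, however, write out the identity you only assert. Work with $\widetilde{W}_{\graph}$ in the edge variables, and put $\alpha=ab+(ab)^{-1}+cd+(cd)^{-1}$ and $\beta=a/b+b/a+c/d+d/c$. Then
\[
\alpha=\frac{(ab+cd)(1+abcd)}{abcd},\qquad \beta=\frac{(ac+bd)(ad+bc)}{abcd},
\]
so $\alpha\beta$ is invariant under all permutations of $a,b,c,d$, which covers both kinds of elementary transformation. Next, let $R$ be the $x$-free part (your $\gamma$, which clashes with the paper's notation $\graph$). It coincides with the $y$-free part of $\widetilde{W}_{\graph'}$, because all other vertices are untouched. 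Hence
\[
[\widetilde{W}_{\graph}^{m}]_{x^0}=\sum_k\frac{m!}{(k!)^2(m-2k)!}(\alpha\beta)^kR^{m-2k}=[\widetilde{W}_{\graph'}^{m}]_{y^0}
\]
already as Laurent polynomials in the remaining variables. Alternatively, $\alpha\beta=\alpha'\beta'$ follows from your substitution identity by comparing the coefficients of $y^{\pm1}$.

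Loops and multiple edges in intermediate graphs are specialisations, such as $b=a$ or $c=a$, of this polynomial identity, so your remark that they are harmless is justified.

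Your route gives a purely algebraic, self-contained proof (the classical mutation-invariance of periods) with no analysis on the torus. The paper's route gives uniformity: the same map also yields the equality of $K_W$ and $\zeta_W$ and the birationality of the Fermi varieties.
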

\begin{proof}
  As recalled above, see also \cite[\S2.3]{gp-tqft},
  the probability~$a_m$ of return to the origin after~$m$ steps of the random walk associated
  to a Laurent polynomial~$W$ equals the constant term~$[W^m]_0$,
  which is the~$m$th term of the classical period sequence~$\Pi_W$.
  Equality of the return probabilities of~$\W_\graph$ and~$\W_{\graph'}$ is therefore equivalent
  to equality of the period sequences~$\Pi_{W_\graph}$ and~$\Pi_{W_{\graph'}}$,
  which is the period case of part~\cref{enumerate:principal-equal-invariants} of \cref{theorem:principal},
  proved in \cref{section:gp}.
\end{proof}

\begin{remark}
Observe that the number of trivalent graphs of genus~$g$ grows very quickly \cite{oeis-A005967}.
Each one of them produces random walks~$\W_{\graph}$.
Thus we are producing many random walks with the same probabilities.
\end{remark}

Finally, we can reinterpret~\cref{theorem:main} to answer
the amplification of~\cref{question:fundamental}.
This will be further generalized to the colored case in \cref{theorem:fullness} of \cref{section:colorings}.
\begin{theorem}
\label{theorem:cannot-hear}
The random walks~$\W_{\graph_i}$
associated with a collection of pairwise-distinct graphs~$\graph_i$
have pairwise-distinct shapes, but pairwise-equal probabilities of return to the origin.
\end{theorem}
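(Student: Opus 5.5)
The theorem has two halves, and I will prove them separately by combining results already established. Throughout I assume, as the statement implicitly does, that the graphs $\graph_i$ all have the same genus $g$ and are trivalent with the standing hypotheses of \cref{section:graphs}: no loops, no multiple edges, and every connected component has at least $5$ vertices. Otherwise the walks would live in lattices of different rank, and equality of return probabilities could not be expected.

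\textbf{Distinct shapes.} First I identify the Newton polytope of each walk. The normalisation $\W_{\graph}=\frac{1}{8(g-1)}\widetilde{W}_{\graph}$ in \eqref{equation:walk-construction} rescales every coefficient by the same positive constant, so the support is unchanged. By \cref{proposition:newton-polytope-qcg}, the Newton polytope of $\W_{\graph_i}$ is therefore the quantum Clebsch--Gordan polytope $P(\graph_i)$.

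Next, suppose $\W_{\graph_i}$ and $\W_{\graph_j}$ had the same shape. By definition there is then a bijective linear map $\psi\colon (N_{\graph_i})_\R\to(N_{\graph_j})_\R$ with $\psi(P(\graph_i))=P(\graph_j)$. A linear map is in particular affine, so \cref{theorem:main} (equivalently \cref{cor:distinct-shape}) forces $\graph_i\cong\graph_j$. This contradicts the assumption that the graphs are pairwise distinct.

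The one subtle point in this half is the ambient space. \cref{theorem:main} is phrased for $P(\graph)\subset\R^E$, while the walk lives in the sublattice $N_\graph$. Since $N_\graph$ has finite index in $\Z^E$, the real spans coincide, $N_\graph\otimes\R=\R^E$, and the polytopes are literally the same subsets. The reconstruction procedure only uses the rays, their semi-sums, and membership in the lattice generated by the rays. That lattice is $N_\graph$ and is intrinsic to the polytope, so the procedure is insensitive to the choice of ambient lattice.

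\textbf{Equal return probabilities.} This half is exactly \cref{prop:same-sound}. The return probability after $m$ steps is the constant term $[\W_\graph^m]_0=(8g-8)^{-m}[\widetilde{W}_\graph^m]_0$. The prefactor depends only on $g$, which is common to all the graphs. So it suffices to show that the period sequences $\Pi_{\widetilde W_{\graph_i}}$ coincide.

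For any two trivalent graphs of genus $g$, I will connect them by a chain of elementary transformations (Hatcher--Thurston moves, \cite{MR0579573}). Each move preserves all periods by the period case of part~\cref{enumerate:principal-equal-invariants} of \cref{theorem:principal}, which in turn relies on \cite[Lemma~2.17, Corollary~2.18]{gp-tqft}. Equality then propagates along the chain.

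I expect this chain argument to be the main obstacle, because the intermediate graphs may leave the class allowed in \cref{section:graphs}: they can acquire loops or multiple edges. The remedy is that the period invariance from \cite{gp-tqft} is stated for general trivalent multigraphs. Only the reconstruction half needs the restricted class, and it is applied only to the endpoints $\graph_i$. The intermediate steps of the chain use only the period invariance, so they are unaffected. Combining the two halves proves the theorem.
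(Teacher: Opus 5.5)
Your proof is correct and is essentially the paper's argument, which combines \cref{cor:distinct-shape} (the Newton polytope of $\overline{W}_\graph$ is $P(\graph)$, and \cref{theorem:main} recovers $\graph$ from it) with \cref{prop:same-sound} (periods depend only on the genus, via chains of elementary transformations and \cite{gp-tqft}); your explicit same-genus hypothesis and your remark that intermediate graphs in the chain may leave the restricted class are sensible additions.

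One small caution about your ambient-lattice remark: the marked semi-sums $\pm e(i)$ typically do not lie in $N_\graph$ (modulo $2$, $N_\graph$ lies in the cut space, which contains $e(i)$ only when $i$ is a bridge), so the reconstruction is insensitive to the lattice because affine isomorphisms preserve the rays, their centroid and coincidences $p+q=p'+q'$, not because of a membership test in $N_\graph$.
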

\begin{proof}
Combine \cref{prop:same-sound}
with \cref{cor:distinct-shape}.
\end{proof}

\paragraph{Symmetric random walks}
A random walk~$W=\sum_{v\in N}p_vz^v$ is \emph{symmetric}
if~$p_v=p_{-v}$ for every~$v\in N$.
Equivalently,~$W$ is real-valued on the unit torus,
so multiplication by~$W$ on~$\mathrm{L}^2(\bT^d)$ is self-adjoint.
Every symmetric random walk has mean zero.
The graph walks used in \cref{theorem:cannot-hear} are not symmetric;
in the colored setting, the only symmetric example is the odd coloring
of the genus~$2$ Theta graph.
The following special case of \cref{question:fundamental} therefore remains open.
\begin{question}
  \label{question:symmetric-random-walks}
  Can one hear the shape of a symmetric lattice random walk?
\end{question}

This question asks only whether the return probabilities determine the Newton polytope.
The stronger problem of recovering both the polytope and the probabilities of its steps
already has a negative answer in dimension~$2$.
Indeed, consider the symmetric random walks
\begin{equation}
  \label{equation:symmetric-random-walk-example}
  \begin{aligned}
    W(x,y)
    &=\frac{1}{24}\left((x+5+6x^{-1})y+(x^{-1}+5+6x)y^{-1}\right),\\
    W'(x,y)
    &=\frac{1}{24}\left((3x+7+2x^{-1})y+(3x^{-1}+7+2x)y^{-1}\right).
  \end{aligned}
\end{equation}
They have the same support and hence the same shape,
namely the square~$[-1,1]^2$,
but their probabilities on the upper edge, read from left to right,
are respectively~$\frac{1}{24}(6,5,1)$ and~$\frac{1}{24}(2,7,3)$;
the probabilities on the lower edge are forced by symmetry.
These triples are neither equal nor reversals,
so no linear symmetry of the common support identifies the distributions.

To see that their return probabilities agree, set~$A=x+5+6x^{-1}$
and~$A'=3x+7+2x^{-1}$.
The factorisations~$A=x^{-1}(x+2)(x+3)$
and $A'=x^{-1}(x+2)(3x+1)$
give~$A(x)A(x^{-1})=A'(x)A'(x^{-1})$,
because~$(x+3)(x^{-1}+3)=(3x+1)(3x^{-1}+1)$.
The odd return probabilities vanish,
while for every~$k\geq0$ the even ones are
\begin{equation}
  \label{equation:symmetric-random-walk-periods}
  [W^{2k}]_0
  =\frac{1}{24^{2k}}\binom{2k}{k}
  \left[(A(x)A(x^{-1}))^k\right]_0
  =[(W')^{2k}]_0.
\end{equation}
Thus the return probabilities do not distinguish the two probability distributions,
but this example does not answer \cref{question:symmetric-random-walks}:
the two Newton polytopes are equal.
\section{Interpretation in Fermi varieties}
\label{section:fermi}
Another reinterpretation of our results appears in the context of algebraic Fermi varieties, for another particular choice of a discrete Laplacian.

Consider the lattice~$\Z^d$ and a sublattice~$\smash{\Gamma=\bigoplus_{i=1}^d \Z a_ie_i}$
where the~$e_i$ denote the standard basis vectors.
Consider the~$a_1\cdots a_d$-dimensional complex Hilbert space~$\mathrm{L}^2(\Z^d/\Gamma)$
of complex-valued~$\Gamma$-periodic functions on~$\Z^d$
with the inner product of two functions~$\phi$ and~$\psi$ defined as
\begin{equation}
  \langle \phi, \psi \rangle:=\sum_{x\in \Z^d/\Gamma} \phi(x)\overline{\psi}(x).
\end{equation}
The potential function~$q(x)$ in the setup of
Gieseker--Kn\"orrer--Trubowitz \cite{MR1099878}
lies in the Hilbert space~$\mathrm{L}^2(\Z^d/\Gamma)$
and they define the following discrete Laplacian
\begin{equation}
  \Delta=\sum_{j=1}^d S_j + S_j^{-1},
\end{equation}
where the~$S_j$'s are the shift operators acting on functions by the formula
\begin{equation}
  S_j \psi(x_1,\dots,x_d):=\psi(x_1,\dots, x_{j-1}, x_j+1, x_{j+1},\dots, x_d).
\end{equation}
Gieseker--Kn\"orrer--Trubowitz introduce the following variety which they call the \emph{Bloch variety}
\begin{equation}
  B(q):=
  \{ (t_1,\dots, t_{d}, \lambda)\in (\C^\times)^d\times \C
  \mid
  \exists \psi \neq 0 \text{ a solution to the discrete model \eqref{eqn:discrete}} \}
\end{equation}
for the system
\begin{equation}
  \label{eqn:discrete}
  \begin{cases}
    (-\Delta +q-\lambda)\psi=0,
    & \text{where~$\lambda\in \C$} \\
    S^{a_j}\psi=t_j\psi,
    & \text{where~$t_j\in \C^\times$ and~$1\leq j\leq d$.}
  \end{cases}
\end{equation}
This discrete model is an analog of the model from solid-state physics called the independent electron model.
The role of~$q(x)$ is played by a field potential,
$\Gamma$ by ions
and the~$\psi$ are wave functions for a gas of electrons moving in~$\R^d/\Gamma$
under the influence of~$q(x)$.
\eqref{eqn:discrete} is the Schr\"odinger equation
with periodic boundary conditions. Physicists consider the 
(real) Fermi variety~$F_{\lambda}$ inside~$(\mathrm{S}^1)^d\times \R$
as the fiber over~$\lambda \in \R$ under the second projection
from~$(\mathrm{S}^1)^d\times \R$.
When~$d=2$, they are referred to as \emph{Fermi curves}.

The following is one of the main theorems in \cite{MR1184395} and should be interpreted as a version of the Torelli-type reconstruction theorems,
see also \cite[Theorem~4.1]{MR1099878}.
\begin{theorem}[Gieseker--Kn\"orrer--Trubowitz]
  If~$d=2$ and~$a_1$,~$a_2$ are distinct odd primes, the Bloch variety determines 
  the generic potential up to obvious symmetries.
  More precisely, there exists a Zariski-dense subset~$L \subset \mathrm{L}^2(\Z^2/\Gamma)$ such that,
  if~$B(q)=B(q')$ for~$q\in L$ and~$q' \in \mathrm{L}^2(\Z^2/\Gamma)$
  then there exists~$(x_0,y_0) \in \Z^2/\Gamma$
  for which
  \begin{equation}
    q'(x,y)=q(\pm x+x_0, \pm y + y_0).
  \end{equation}
\end{theorem}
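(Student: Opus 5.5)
Since this theorem is quoted from \cite{MR1184395} rather than proved here, the following is the route I would follow to reproduce it.

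\textbf{Step 1: make the Bloch variety algebraic.} Restrict the operator to Floquet functions with multipliers $(t_1,t_2)$. This identifies $-\Delta+q-\lambda$ with an $a_1a_2\times a_1a_2$ matrix $H_q(t)-\lambda$, whose entries are Laurent polynomials in $t_1^{1/a_1},t_2^{1/a_2}$. Hence $B(q)$ is the zero locus of
\begin{equation}
  P_q(t,\lambda)=\det\bigl(H_q(t)-\lambda\bigr),
\end{equation}
and the hypothesis $B(q)=B(q')$ becomes $P_q=P_{q'}$ up to a unit.

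\textbf{Step 2: extract invariants by expanding in $\lambda$.} Expanding $P_q$ as a polynomial in $\lambda$ gives the characteristic-polynomial coefficients of $H_q(t)$, equivalently $\operatorname{Tr}H_q(t)^n$ for all $n$. Each trace is a Laurent polynomial in $t$; its monomial coefficients are weighted sums over closed lattice paths of length $n$ with winding $(w_1,w_2)$ around the torus $\Z^2/\Gamma$, weighted by products of values of $q$.
\begin{itemize}
  \item The low-order terms recover $\sum_x q(x)$, $\sum_x q(x)^2$, and similar sums.
  \item The terms with winding $(1,0)$ and $(0,1)$ at the lowest possible path length record the one-dimensional sums of $q$ along horizontal and vertical cycles.
  \item More generally, for each primitive direction, the extreme monomials of $P_q$ give the determinants of the one-dimensional periodic Jacobi-type operators obtained by averaging $q$ transversally.
\end{itemize}
Geometrically, one compactifies the Fermi curve $F_\lambda$ and reads off its behaviour at the points at infinity. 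For large $\lambda$ it is a perturbation of the free Fermi curve $q=\mathrm{const}$, whose components are explicit. The hypothesis that $a_1,a_2$ are distinct odd primes is used here: it guarantees that the free curve has only ordinary double points, and that these points are in bijection with the nonzero Fourier modes $\hat q(\xi)$, $\xi\in\Z^2/\Gamma$.

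\textbf{Step 3: recover the Fourier coefficients locally.} Near each double point of the free curve, the perturbed curve opens a handle, and its local equation has leading coefficient $|\hat q(\xi)|^2$ or $\hat q(\xi)\hat q(-\xi)$. From $B(q)$ one therefore reads off the products $\hat q(\xi)\hat q(-\xi)$ for every $\xi$. Second-order data at the same points then gives the triple products $\hat q(\xi)\hat q(\eta)\hat q(-\xi-\eta)$.

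\textbf{Step 4: the main obstacle, a phase-retrieval problem.} One must show that, for generic $q$, the pair products and triple products determine $\hat q$ up to three ambiguities:
\begin{itemize}
  \item translation, $\hat q(\xi)\mapsto \chi(\xi)\hat q(\xi)$ for a character $\chi$ given by $x_0$;
  \item the reflections $\xi\mapsto(\pm\xi_1,\pm\xi_2)$;
  \item the inversion $q\mapsto q(-x)$.
\end{itemize}
I would first set $\phi(\xi)$ to be the ratio $\hat q'(\xi)/\hat q(\xi)$. The pair products force $\phi(\xi)\phi(-\xi)=1$. The triple products make $\phi$ multiplicative along every triangle $\xi+\eta+\zeta=0$, so $\phi$ is a character of $\Z^2/\Gamma$, which is exactly a translation. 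Genericity (all $\hat q(\xi)\neq0$, no accidental equalities) defines the Zariski-dense set $L$ and excludes spurious solutions. The delicate point is that the local data at a double point is only defined up to the symmetries of the free curve, which are the sign changes of the coordinates. Matching these symmetries coherently across all double points is what produces the $\pm x,\pm y$ in the statement, and I expect this bookkeeping to be the hardest part.
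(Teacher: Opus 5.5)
The paper gives no proof of this statement; it is quoted from \cite{MR1184395}. Your outline therefore has to stand on its own, and as written it fails at its core: Steps~3 and~4 together prove too much.

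Since $H_q(t)^{\mathrm{t}}=H_q(t^{-1})$, one has $P_q(t,\lambda)=P_q(t^{-1},\lambda)$ and hence $B(q(-\,\cdot\,))=B(q)$. On the other hand, the triple products of $q(-\,\cdot\,)$ are $T(-\xi,-\eta)$, where $T(\xi,\eta):=\hat q(\xi)\hat q(\eta)\hat q(-\xi-\eta)$. So no invariant extracted from $B(q)$ can equal $T(\xi,\eta)$. If it could, your argument that $\phi=\hat q'/\hat q$ is a character would show that $B(q)$ determines $q$ up to translation alone. That is false for generic $q$, and it contradicts the inversion you list yourself.

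You can see this directly in the second-order Schur complement at a crossing of the sheets $k$ and $k+\xi$. The two cross terms through the same intermediate mode $m=k-\eta$ have the same energy denominator and combine into $\bigl(T(\xi,\eta)+T(-\xi,-\eta)\bigr)/(\lambda-E_m)$. At best you recover $S(\xi,\eta)=T(\xi,\eta)+T(-\xi,-\eta)$, and even that requires separating the various $\eta$, which needs a linear-independence argument for the denominators along the crossing locus. Knowing $S(\xi,\eta)$ gives only the unordered pair $\{T(\xi,\eta),T(-\xi,-\eta)\}$ of roots of $X^2-S(\xi,\eta)X+P(\xi)P(\eta)P(\xi+\eta)$, where $P(\xi)=\hat q(\xi)\hat q(-\xi)$.

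The missing idea is a coherence argument. You would need to show that, for generic $q$, among all ways of picking one root for each pair $\pm(\xi,\eta)$, only the two global choices come from a single function $\hat q'$; these two give $q$ and $q(-\,\cdot\,)$ up to translation. That is the real content of the theorem along your route, and it is absent.

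You also misidentify where the signs come from. The labels are canonical. $B(q)$ is given in the fixed coordinates $(t,\lambda)$, and the crossing locus of the free Bloch variety, with the pair $\pm\xi$ attached to each of its components, depends only on $a_1,a_2$ and $\hat q(0)$. So there is no labelling ambiguity at individual double points that needs to be matched coherently.

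A single reflection $x_1\mapsto -x_1$ is not a symmetry of the data: it replaces $B(q)$ by its image under $t_1\mapsto t_1^{-1}$, which in general is a different variety. The symmetries built into the problem are translations and $x\mapsto -x$. The independent signs in the statement are merely permitted by the conclusion; your argument need not generate them.

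Smaller points:
\begin{itemize}
\item The free Fermi curve has $2(a_1a_2-1)$ double points, four for each pair $\pm\xi$, not one per nonzero mode.
\item Passing from $B(q)=B(q')$ to $P_q=P_{q'}$ needs $P_q$ to be reduced, e.g.\ irreducible for generic $q$.
\item Extracting $P(\xi)$ and $S(\xi,\eta)$ from the large-$\lambda$ geometry requires a canonical normalization of the local equations, for instance via the gap along the crossing locus, which you do not supply.
\end{itemize}
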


\paragraph{Specialization to graph potentials}
The analogy also includes the \emph{shape} of the objects involved.
The definition of the Bloch variety and Fermi varieties
starts from a discretization of a Laplacian.
Likewise,
one can show that the graph potentials from \cite{gp-tqft}
(see also \cref{section:random-walks})
can be interpreted in terms of a discretization of the standard Laplacian in~$\R^{3g-3}$.
This also gives a proof of part~\cref{enumerate:principal-approximation} of \cref{theorem:principal}.

For this we
consider the normalised graph potential in \eqref{equation:walk-construction},
and consider its series expansion around the point~$(1,\ldots,1)$ in logarithmic coordinates over the real numbers,
i.e.,
after having applied~$\exp\colon N_\R\to\R_+^{3g-3}$ in the setup of \eqref{equation:important-graph-potential-M-pvs}.
The special properties of this point are discussed in \cref{proposition:balanced}.
The expansion~$\overline{W}_{\graph,c}(\exp(\hbar\partial))$ has constant term~$1$,
no linear term,
and the coefficient for~$\hbar^2$ or the quadratic term is precisely the standard Laplacian.

This can be checked
from the definition in \eqref{equation:important-graph-potential},
where the sum over the different sign choices allows us to cancel and rewrite accordingly.
Thus the translation of~$\overline{W}_{\graph,c}(\exp(\hbar\partial))$ to cancel the constant term
is truly a discretization of the (rescaled) standard Laplacian in~$\R^{3g-3}$.
We leave the details to the interested reader,
as they play no role in what follows.
We do include a proof of the appropriate part of \cref{theorem:principal}.

\begin{proof}[Part~\cref{enumerate:principal-fermi} of \cref{theorem:principal}]
  We first treat birationality.
  By \cref{section:changeofcoordinates}, the graph potentials of two trivalent graphs~$\graph$ and~$\graph'$
  related by an elementary transformation satisfy~$W_{\graph'}=W_{\graph}\circ F$
  for a birational change of coordinates~$F$ of the torus,
  see \cite[Theorem~2.12]{gp-tqft}.
  As~$F$ preserves the value of the potential, for every~$\lambda$ it restricts to a birational map
  \begin{equation}
    F_\lambda(W_{\graph'})=\{W_{\graph'}=\lambda\}\dashrightarrow\{W_{\graph}=\lambda\}=F_\lambda(W_{\graph}),
  \end{equation}
  and any two trivalent graphs of a fixed genus are connected by a sequence of such transformations.

  We now treat non-isomorphism.
  Fix~$\lambda$ for which~$W_{\graph}-\lambda$ is reduced; this excludes at most finitely many values.
  The Fermi variety~$F_\lambda(W_{\graph})=V(W_{\graph}-\lambda)$ is a hypersurface in the torus~$(\C^\times)^{3g-3}$,
  so it recovers the Laurent polynomial~$W_{\graph}-\lambda$ up to a unit,
  that is, up to a monomial factor and a nonzero scalar.
  Multiplying by a monomial only translates the Newton polytope,
  and since the origin lies strictly inside~$\NP(W_{\graph})$ by \cref{proposition:newton-polytope-qcg},
  the constant term~$-\lambda$ leaves it unchanged; thus
  \begin{equation}
    \NP(W_{\graph}-\lambda)=\NP(W_{\graph})=P(\graph),
  \end{equation}
  the quantum Clebsch--Gordan polytope, recovered up to a lattice translation.
  Hence an isomorphism~$F_\lambda(W_{\graph})\xrightarrow{\sim}F_\lambda(W_{\graph'})$
  induced by an automorphism of the torus would carry~$W_{\graph}-\lambda$
  to a unit multiple of~$W_{\graph'}-\lambda$,
  and would therefore induce a unimodular affine isomorphism~$P(\graph)\xrightarrow{\sim}P(\graph')$.
  By \cref{cor:distinct-shape}, itself a consequence of the reconstruction theorem in~\cref{theorem:main},
  this forces~$\graph\cong\graph'$.
  Therefore, for non-isomorphic graphs no monomial change of coordinates relates the two Fermi varieties,
  which is the asserted non-isomorphism as subvarieties of the torus.
\end{proof}
\section{Groupoids of colored graphs and their polytopes}
\label{section:colorings}
In this section we will upgrade the context to include colorings,
and prove \cref{theorem:colored-reconstruction}
as \cref{theorem:fullness},
generalizing \cref{theorem:main} from \cref{section:reconstruction}
(which was already used in \cref{section:random-walks} to obtain \cref{theorem:cannot-hear}).

\paragraph{Colored graphs and their isomorphisms}
A \emph{colored graph} is a pair \((\graph,c)\) consisting of
a graph \(\graph\)
and a coloring function \(c\colon V(\graph) \to \{\pm1\}\).
We will consider~$\{\pm1\}$ as a group under the multiplication.
Whenever needed we will reinterpret~$\{\pm1\}$ as~$\Z/2\Z$.
We define the following.
\begin{definition}
  \label{definition:colored-isomorphism}
  An \emph{isomorphism} of colored graphs \((\graph,c)\) and \((\graph',c')\)
  consists of a pair \((f,g)\),
  where
  \begin{itemize}
    \item~$f$ is a graph isomorphism \(f\colon \graph \isomto \graph'\),
    \item~$g$ is a function \(g\colon E(\graph') \to \{\pm1\}\)
      such that for any vertex \(v\in V(\graph)\)
      with edges \(i',j',k'\) incident to \(f(v)\in V(\graph')\) the relation
      \begin{equation}
        c'(fv) / c(v) = g(i') g(j') g(k')
      \end{equation}
      holds.
  \end{itemize}
\end{definition}
In particular if \(f\) is the identity
we can regard the colorings \(c,c'\) as \(0\)-chains of the graph
with values in~$\Z/2\Z$,
which we will consider as a ring.
Then the set of all~$g$'s such that~$(\operatorname{id}, g)$ is an isomorphism
is the set of~$1$-chains whose boundary is the ratio of \(c'\) and \(c\) in \(\{\pm1\}\),
so the set of automorphisms is a torsor under the first relative homology group
\(\mathrm{H}_1(\graph,\partial \graph;\Z/2\Z)\).

\paragraph{Quantum Clebsch--Gordan polytope of a colored graph}
Next we generalise the definition~$P(\graph)$ to incorporate the coloring function.
\begin{definition}
  \label{definition:polytope-colored}
  Define the \emph{colored quantum Clebsch--Gordan polytope} \(P(\graph,c)\)
  as the convex envelope in \(S(\graph)\)
  of the rays \(p(v,s)\) in \eqref{equation:rays} where we now ask
  \begin{equation}
    \label{equation:sign-choice}
    s(i)s(j)s(k) = c(v).
  \end{equation}
\end{definition}
Note that \(P(\graph) = P(\graph,c_0)\),
where \(c_0\) is the constant coloring taking \(c_0(v) = +1\) for all~$v\in V(\graph)$,
as \eqref{equation:sign-choice} reduces to \eqref{equation:sign-choice-no-coloring}.

\paragraph{Groupoid of colored graphs}
\label{groupoid-of-colored-graphs}
The composition of two colored isomorphisms \((f,g)\colon (\graph,c) \to (\graph',c')\)
and \((f',g')\colon (\graph',c') \to (\graph'',c'')\) is \((f'\circ f, g'')\),
where~$g''$ is given by
\begin{equation}
  g''(i'') := g'(i'') \cdot \prod_{f'(i') = i''} g(i').
\end{equation}
Generators for the isomorphism groupoid of colored graphs can be chosen to be
\begin{equation}
  (f,g)\colon (\graph,c) \to (\graph',c')
\end{equation}
with either \(c = c' \circ f, g = 1\)
or \(f\) being the identity and \(g\) being an assignment of~$\pm 1$ to a single edge.

\paragraph{Functor of quantum Clebsch--Gordan polytopes}
\label{functor-of-quantum-clebsch-gordan-polytopes}
Any isomorphism
\begin{equation}
  (f,g)\colon (\graph,c) \to (\graph',c')
\end{equation}
of colored graphs naturally induces an isomorphism \(P(f,g)\colon P(\graph,c) \to P(\graph',c')\)
of the associated colored quantum Clebsch--Gordan polytopes.
The generators of the first kind correspond to ``permutations''
and the generators of the second kind correspond to linear transformations
that send \(e(i)\) to \(-e(i)\) for the chosen edge~\(i\) and preserve all other basis vectors.

\paragraph{Colored combinatorial non-abelian Torelli theorem}
We can upgrade the reconstruction result of \cref{section:reconstruction} as follows,
which is the more precise version of \cref{theorem:colored-reconstruction}.
\begin{theorem}
  \label{theorem:fullness}
  The association \(P(-)\) is a \emph{full} functor
  from the isomorphism groupoid of colored trivalent graphs
  to the groupoid of convex polytopes and affine isomorphisms.
  That is, any affine linear isomorphism between
  colored quantum Clebsch--Gordan polytopes \(P(\graph,c)\) and \(P(\graph',c')\)
  is induced by a uniquely defined isomorphism of colored graphs.
\end{theorem}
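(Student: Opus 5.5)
Given an affine isomorphism $\Phi\colon P(\graph,c)\to P(\graph',c')$, I first reduce to a linear map. $\Phi$ sends rays to rays, hence preserves the center of mass of the rays. By \cref{lemma:center}, whose quadruple relation \eqref{equation:linear-relation} still holds for colored sign choices because the four admissible $s$ with $s(i)s(j)s(k)=c(v)$ still sum to zero, this center is the origin in both polytopes. So $\Phi$ is linear on $S(\graph)=\R^{E}$ and induces a bijection on rays. I then check that \cref{proposition:scalar-product} and \cref{Lemma-exactly-twice} go through verbatim in the colored case. The scalar products depend only on adjacency and on agreement of signs. For the semi-sums and semi-differences, the four colored sign vectors at $v$ still have pairwise semi-sums exactly $\pm e(i),\pm e(j),\pm e(k)$, each once, and semi-differences $\pm e(i)\pm e(j)$, etc., each once. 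So the reconstruction procedure of \cref{section:reconstruction} applies to colored polytopes.

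The procedure is defined purely in terms of the affine structure: rays, lattice semi-sums, and their multiplicities. A linear $\Phi$ carrying rays to rays therefore carries the marked semi-sums of $P(\graph,c)$, namely the set $\{\pm e(i)\}_{i\in E}$ by \cref{Lemma-exactly-twice}, bijectively onto the set $\{\pm e(i')\}_{i'\in E'}$. Since $\Phi$ is linear, it respects negation. Hence there are a bijection $f_E\colon E\to E'$ and signs $g\colon E'\to\{\pm1\}$ with $\Phi(e(i))=g(f_E i)\,e(f_E i)$, i.e.\ $\Phi$ is a signed permutation matrix. This step, that $\Phi$ is forced to be a signed permutation, is the main point. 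It is where one must be careful that "semi-sum lies in $N$" and "appears at least twice" are intrinsic notions, so that the lattice $N$ spanned by rays is mapped to $N'$; that holds because $\Phi$ maps rays to rays.

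Next I recover $f$ on vertices. A ray $p(v,s)$ has support $\{i,j,k\}$, the edges at $v$, and $\Phi(p(v,s))$ is a ray $p(v',s')$ with support $f_E(\{i,j,k\})$. So $f_E$ maps the edge-triples at vertices to edge-triples at vertices, which defines $f\colon V\to V'$ via $v\mapsto v'$. This is well defined because the triple determines the vertex: in a simple trivalent graph with components of size at least $5$, two vertices cannot share all three edges. Incidence is preserved by construction, so $(f,f_E)$ is a graph isomorphism. For the coloring, take $\Phi(p(v,s))=\sum g(f_E i)s(i)e(f_E i)$. Its sign product is $g(i')g(j')g(k')\,c(v)$, which must equal $c'(fv)$ since the image is an admissible ray. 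This is exactly the condition of \cref{definition:colored-isomorphism}. So $(f,g)$ is a colored isomorphism, and $P(f,g)=\Phi$ because both are linear maps agreeing on the basis $e(i)$.

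Finally, uniqueness. If $P(f,g)=P(f_1,g_1)$, then the actions on the basis vectors $\pm e(i)$ agree, which determines the edge map and the signs. The edge map determines the vertex map, again since vertices are determined by their edge triples. I expect the only delicate step to be the second paragraph, where linearity plus the intrinsic characterization of $\{\pm e(i)\}$ from \cref{Lemma-exactly-twice} pins $\Phi$ down; everything else is bookkeeping.
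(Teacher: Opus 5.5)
Your proposal is correct and follows essentially the paper's route: use the center of mass to remove translations, use the colored version of \cref{Lemma-exactly-twice} to identify the doubly represented semi-sums with $\pm e(i)$, and then match the remaining sign bookkeeping to the condition of \cref{definition:colored-isomorphism}, where you make explicit (including uniqueness) what the paper phrases as ``different sign choices of the basis differ by single-edge flips''. One harmless slip is that you should drop the filter ``the semi-sum lies in $N$'': since $2\Z^E\subset N$ and $N/2\Z^E$ is the mod-$2$ cut space, $e(i)\in N$ holds only for bridges $i$, and the multiplicity count alone, which is intrinsic under a linear bijection of rays, is all your argument needs.
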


\begin{proof}
  The polytopes~$P(\graph,c)$ and~$P(\graph',c')$ are considered
  as in \cref{lemma:center},
  thus affine linear isomorphisms do not take translations into account.
  The steps of the algorithm above can be applied to
  the colored set-up without modifications.
  Then for any of the \(2^{\dim S(\graph)}\) bases of \(S(\graph)\)
  with given coordinate axes \(\pm e(i)\)
  the coloring of the vertices of the graph is well-defined and uniquely determined
  by the parity of the number of minus signs in
  the coordinates of the respective rays in the given basis.
  A unique choice of signs corresponds to the original coloring,
  all other choices produce homologically equivalent colorings,
  provided that the total parity (even or odd) of vertices in every leafless connected component remains unchanged.

  It suffices to notice that the change of the sign of a single vector in the basis corresponds to
  simultaneous change of the color of the two vertices adjacent to this edge.
\end{proof}
\section{Algebro-geometric interpretation}
\label{section:algebraic-geometry}

\paragraph{Abelian and non-abelian Torelli}
The usual Torelli theorem recovers a curve~$C$ from the pair~$(\Jac C,\Theta)$
of its Jacobian variety polarised by the Theta divisor \cite{Torelli},
see \cite[\S VI.3]{MR0770932} for a modern discussion.
More generally,
the ``Torelli package'' also provides a description of the polarised automorphism group
in terms of automorphisms of~\(C\),
and the full package also
classifies all isomorphisms between the pairs~\((\Jac C,\Theta)\) and~\((\Jac C',\Theta')\).

A similar \emph{non-abelian} Torelli package exists
for moduli spaces of bundles with non-abelian structure groups,
such as moduli spaces~$\moduli_C(r,d)$ and~$\moduli_C(r,\mathcal{L})$
of semistable vector bundles of higher rank~$r>1$ and degree~$d$
(resp.~fixed determinant~$\mathcal{L}\in\Pic C$).

To state the non-abelian Torelli package,
observe that
\begin{itemize}
  \item
    an isomorphism of curves~\(f\colon C \isomto C'\)
    induces an isomorphism~\(\moduli_C(r,\mathcal{L}) \isomto\moduli_{C'}(r,f_*\mathcal{L})\)
    given by \(\mathcal{E}\mapsto f_*\mathcal{E}\),
  \item
    a line bundle~\(\mathcal{N} \in \Pic C\)
    induces an isomorphism~\(\moduli_C(r,\mathcal{L}) \isomto \moduli_C(r,\mathcal{L}\otimes \mathcal{N}^{\otimes r})\)
    given by \(\mathcal{E} \mapsto \mathcal{E} \otimes\mathcal{N}\).
\end{itemize}
These isomorphisms generate a groupoid,
with objects being pairs \((C,\mathcal{L})\)
of a smooth, connected projective curve \(C\) of genus~$g\geq 2$
and a line bundle \(\mathcal{L}\in\Pic C\)
and morphisms from \((C,\mathcal{L})\) to \((C',\mathcal{L}')\)
given by triples \((f,\mathcal{N}',\iota)\) of
\begin{itemize}
  \item an isomorphism of curves \(f\colon C\isomto C'\),
  \item a line bundle \(\mathcal{N}'\in\Pic C'\),
  \item an isomorphism of line bundles \(\iota\colon f_*\mathcal{L}\isomto \mathcal{L}' \otimes \mathcal{N}'^{\otimes r}\).
\end{itemize}
Hence for fixed~$r\geq 2$
the association \((C,\mathcal{L}) \mapsto\moduli_C(r,\mathcal{L})\)
is promoted to a functor~$\mathcal{M}$ from this groupoid
to the groupoid of algebraic varieties and isomorphisms.

Unlike Jacobians the spaces \(\moduli_C(r,\mathcal{L})\) have anticanonical polarizations,
and a non-abelian Torelli theorem for smooth curves was formulated
in rank~2 independently by Mumford--Newstead \cite[Corollary]{MR0234958}
and Tyurin~\cite[Theorem~1]{MR0260745},
and in arbitrary rank by Narasimhan--Ramanan \cite[Theorem~3]{MR384797}.
The Torelli package was completed by a description of the automorphism group
by Kouvidakis--Pantev~\cite[Theorems~B and~D]{MR1336336},
leading to the following statement.
Let us phrase it for~$r=2$,
the general statement is similar but requires more notation.

\begin{theorem}[Non-abelian Torelli]\label{thm:nat}
  The functor~$\mathcal{M}\colon(C,\mathcal L)\mapsto\moduli_C(2,\mathcal L)$ is fully faithful:
  every isomorphism~$\moduli_C(2,\mathcal L)\isomto\moduli_{C'}(2,\mathcal L')$
  is induced by a unique isomorphism of pairs~$(C,\mathcal L)\isomto(C',\mathcal L')$.
  In particular~$\moduli_C(2,\mathcal L)\cong\moduli_{C'}(2,\mathcal L')$ implies~$C\cong C'$.
  The automorphism group~$\Aut(\moduli_C(2,\mathcal L))$ is an extension
  of the automorphisms of the curve~$C$
  by the $2$-torsion subgroup~$\Jac(C)[2]$ of its Jacobian.
\end{theorem}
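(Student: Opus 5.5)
This theorem is cited from the literature: Mumford--Newstead, Tyurin and Narasimhan--Ramanan for the reconstruction of $C$, and Kouvidakis--Pantev for the automorphism group. So the plan is to assemble those results rather than prove anything from scratch. The argument has three steps: faithfulness, reconstruction of the curve, and fullness.

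\emph{Faithfulness.} Suppose a morphism $(f,\mathcal N',\iota)$ induces the identity on $\moduli_C(2,\mathcal L)$. I would argue in three stages.
\begin{itemize}
\item The induced isomorphism on the intermediate Jacobian of $\moduli_C(2,\mathcal L)$ is trivial. This intermediate Jacobian is identified with $\Jac C$, so $f$ acts trivially on $\Jac C$. For $g\geq 2$ this forces $f=\mathrm{id}$, up to the hyperelliptic involution. That involution acts by $-1$ on $\mathrm{H}^1$, and one checks it still acts nontrivially on the moduli space unless it is absorbed by a tensor twist.
\item Tensoring by $\mathcal N$ with $\mathcal N^{\otimes 2}\cong\mathcal O$ acts nontrivially whenever $\mathcal N\not\cong\mathcal O$. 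To see this, use the Hecke-type description, or the fact that the fixed locus is a proper subvariety: it consists of bundles pushed forward from the associated étale double cover.
\item The isomorphism $\iota$ is determined up to scalars, and scalars act trivially on $\mathcal E$. So the morphism is unique, once morphisms are taken modulo scalar automorphisms of $\iota$.
\end{itemize}

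\emph{Reconstruction of $C$.} Given $\moduli_C(2,\mathcal L)\cong\moduli_{C'}(2,\mathcal L')$, I would treat the odd-degree and even-degree cases separately.
\begin{itemize}
\item For $\deg\mathcal L$ odd, the space is smooth with Picard group $\Z$. Its $\mathrm{H}^3$ carries a polarized Hodge structure isomorphic to $\mathrm{H}^1(C)(-1)$; this is the Mumford--Newstead step. The classical Torelli theorem then gives $C\cong C'$.
\item For $\deg\mathcal L$ even, the variety is singular. I would use Tyurin and Narasimhan--Ramanan: recover $C$ from the singular locus or from the Hecke correspondences, which are parametrized by $C$ itself. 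Alternatively, use the intersection cohomology $\mathrm{IH}^3$.
\end{itemize}

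\emph{Fullness: the main obstacle.} Let $\Phi$ be an arbitrary isomorphism.
\begin{enumerate}
\item The Torelli step gives an induced isomorphism $f\colon C\isomto C'$.
\item Compose $\Phi$ with $f_*^{-1}$, reducing to an automorphism of a single space $\moduli_C(2,\mathcal L)$ that acts trivially on $\mathrm{H}^3$.
\item Show such an automorphism is tensoring by a $2$-torsion line bundle. This is the Kouvidakis--Pantev argument, and it is the hard step. I would follow them: $\Phi$ preserves the Hitchin/theta-linear system, since the anticanonical class is a fixed multiple of the ample generator. It therefore induces an automorphism of the Hitchin base or of the space of theta functions compatible with the natural data. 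Rigidity then identifies it with the action of $\Jac(C)[2]$.
\end{enumerate}
Putting the three steps together yields the extension $1\to\Jac(C)[2]\to\Aut\to\Aut(C)\to1$.
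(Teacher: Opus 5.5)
Your strategy is the same as the paper's. The paper gives no argument of its own: it combines Mumford--Newstead, Tyurin and Narasimhan--Ramanan (recovering $C$) with Kouvidakis--Pantev (automorphisms).

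\textbf{The even-degree case.} Your sketch of this case contains false steps, and they fail exactly where the statement as written (genus $g\geq 2$, arbitrary $\mathcal L$) fails. For $g=2$ and $\deg\mathcal L$ even, Narasimhan--Ramanan showed $\moduli_C(2,\mathcal L)\cong\mathbb{P}^3$ for \emph{every} curve $C$ of genus $2$. Consequently:
\begin{itemize}
\item the space is smooth, so there is no singular locus from which to read off $\Jac(C)$;
\item $\mathrm{IH}^3=\HH^3=0$, and the curve is not determined at all;
\item $\Aut\cong\mathrm{PGL}_4(\C)$, which is not a finite extension of $\Aut(C)$.
\end{itemize}
Your faithfulness claim about the hyperelliptic involution $\sigma$ also fails there. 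On $\Pic^1(C)$ one has $\sigma^*L\cong K\otimes L^{-1}$. The divisor $D_{\mathcal E}=\{L : h^0(\mathcal E\otimes L)>0\}$ is invariant under $L\mapsto K\otimes L^{-1}$, by Serre duality and $\mathcal E^\vee\cong\mathcal E$. Hence $D_{\sigma^*\mathcal E}=D_{\mathcal E}$. Since $\mathcal E\mapsto D_{\mathcal E}$ is the isomorphism onto $|2\Theta|\cong\mathbb{P}^3$, $\sigma^*$ acts trivially.

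So you must assume $g\geq 3$ in even degree. The cited results necessarily exclude this case, and the paper's formulation glosses over it. Even for $g\geq 3$, the nontriviality of $\sigma^*$ in even degree needs a real argument rather than ``one checks''. Your intermediate-Jacobian argument for faithfulness only makes sense in odd degree, where the moduli space is smooth.

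\textbf{The fullness step.} There is a smaller logical slip here. Classical Torelli, applied to the isometry induced by $\Phi$ on $\HH^3\cong\HH^1(C)(-1)$, only produces $f$ with $f^*=\pm\Phi^*$. For non-hyperelliptic $C$ the sign cannot be absorbed by an automorphism of the curve. Since $-1$ on $\HH^3$ is itself an isometry of polarized Hodge structures, Torelli does not show that $f_*^{-1}\circ\Phi$ acts trivially on $\HH^3$. Ruling out the sign $-1$ is part of what Kouvidakis--Pantev prove, so you should apply their result to $f_*^{-1}\circ\Phi$ directly.

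Likewise, preservation of the ample generator of $\Pic$ tells you nothing about the Hitchin base. In Kouvidakis--Pantev the automorphism acts on the Hitchin fibration through the cotangent bundle of the stable locus. The curve is then recovered from the Hitchin discriminant, not from the theta linear system.
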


\paragraph{From geometry to combinatorics (and back)}
In the combinatorial versions of these theorems
trivalent graphs will correspond to curves,
colorings to line bundles,
polytopes to moduli spaces,
and the functor \(P\) from \cref{section:colorings}
is a version of the functor~$\mathcal{M}$.

The analogy between curves and trivalent graphs is standard
and can be explained as follows.
With a graph one can associate a \emph{graph curve}~$C(\graph)$
as studied by Bayer--Eisenbud~\cite{MR1097026},
the union of projective lines enumerated by the vertices of the graph
with the intersections corresponding to the edges of the graph.
The algebro-geometric property of very ampleness of the canonical bundle
of graph curves in op.~cit.~corresponds to three-connectedness
of the respective trivalent graphs, and forbids loops and multiple edges.

The leaves (if there are any) correspond to additional marked smooth points.
These graphs, and their associated graph curves,
index the finite set of zero-dimensional strata
in the usual stratification of
the Deligne--Mumford--Knudsen moduli space of stable marked curves,
with the graph~$\graph$ being the dual graph of the marked curve~$C(\graph)$.

A path between such a zero-dimensional stratum and the interior of the moduli space corresponds to
a degeneration of a smooth algebraic curve to the graph curve.
Topologically, the respective vanishing cycles give the Thurston cut system
of the corresponding Riemann surface into trinions (i.e.,~spheres with three holes, or pairs-of-pants)
encoded by the same graph.

Moduli spaces such as Jacobians and~$\moduli_C(r,\mathcal{L})$ vary together with the smooth curve \(C\),
and we can try to construct limits or degenerations as the curve tends to a graph curve~$C(\graph)$.
It is natural to ask whether there are Torelli-type theorems for these limiting varieties.

\paragraph{Combinatorial Torelli theorems}
For stable curves of genus~$g$ with rational components
Artamkin~\cite{MR2101281} constructed a compactification~$A(\graph)$
of the generalized Jacobian which is a toric variety whose polytope
is built from the combinatorial data of cycles in~$\graph$.
This combinatorial approach is closely related to the tropical Torelli theorem for Jacobians established by Mikhalkin and Zharkov \cite{MR2457739}.
Artamkin~\cite{MR2272775} furthermore proves that~$A(\graph)$ recovers the graph~$\graph$
for graphs which are~$k$-connected for~$k\geq 3$.
Similar Torelli-type theorems for the Albanese torus for~$3$-connected graph curves
have been proved by Caporaso--Viviani~\cite{MR2641941}; see also \cite{MR3046301} for the foundational Riemann--Roch theory for weighted graphs.

Manon~\cite{MR2928457} constructed a toric degeneration of
the moduli space of rank-2 bundles with fixed determinant
for any trivalent colored graph~$(\graph,c)$,
with the translation between op.~cit.
and our setup using colored graphs being explained in \cite[\S2.2]{gp-sympl}.
But in the spirit of results of Artamkin~\cite{MR2272775} and Caporaso--Viviani~\cite{MR2641941},
it is interesting to ask a Torelli-type question:
are these toric varieties uniquely determined by the maximally degenerate curves?

The polytopes~$P(\graph,c)$ defined in \cref{definition:polytope-colored}
are the polar duals of moment polytopes (also known as the fan polytopes) of the toric varieties constructed by Manon \cite{MR2928457,MR3861806},
see also \cite[\S2.2]{gp-sympl}. We write~$X(\graph,c)$ for the resulting toric variety.
Since a projective toric Fano variety with its anticanonical polarization
is uniquely determined by its moment polytope,
\cref{theorem:fullness} can be considered as
a non-abelian analog of a Torelli-type theorem as in \cref{thm:nat}
for the toric varieties constructed by Manon \cite{MR2928457}.
At the level of morphisms, a torus-equivariant isomorphism between two of these varieties,
considered up to the torus (equivalently, a toric isomorphism in the sense of \cite[\S3.3]{MR2810322}),
is by \cite[Theorem~3.3.4]{MR2810322} the same as a lattice isomorphism of their fans,
and hence as an affine isomorphism of the fan polytopes~$P(\graph,c)$.
This promotes the fullness of~$P$ in \cref{theorem:fullness} to the following statement,
where we work with connected graphs.

\begin{corollary}[Combinatorial non-abelian Torelli]\label{cor:imp}
  The functor~$(\graph,c)\mapsto X(\graph,c)$ sending a colored trivalent graph of genus~$g$
  to its toric degeneration is fully faithful, for toric varieties and toric isomorphisms
  (torus-equivariant isomorphisms modulo the torus):
  every isomorphism~$X(\graph,c)\isomto X(\graph',c')$
  is induced by a unique isomorphism~$\phi$ of colored graphs~$(\graph,c)\isomto(\graph',c')$.
  In particular~$X(\graph,c)\cong X(\graph',c')$ implies~$\graph\cong\graph'$,
  with~$c'=\phi_*c$ in~$\HH_0(\graph',\Z/2\Z)$.
  The group of toric automorphisms of~$X(\graph,c)$ modulo the torus is an extension
  of the automorphisms of the graph~$\graph$
  by~$\HH_1(\graph;\Z/2\Z)\cong(\Z/2)^g$, the $2$-torsion of the tropical Jacobian of~$\graph$
  and the combinatorial counterpart of~$\Jac(C)[2]$; cf.~\cref{section:colorings}.
\end{corollary}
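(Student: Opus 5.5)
The plan is to reduce the corollary to \cref{theorem:fullness} through the dictionary between projective toric Fano varieties and their fan polytopes, and then to read off the automorphism group from the groupoid of colored graphs in \cref{section:colorings}.

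The first step is to identify $X(\graph,c)$ with the toric variety whose fan is spanned over the faces of $P(\graph,c)\subset N\otimes\R$. Here $N$ is the lattice generated by the rays $p(v,s)$, and by \cref{proposition:newton-polytope-qcg} the origin lies in the interior of $P(\graph,c)$. By \cite[Theorem~3.3.4]{MR2810322}, a toric isomorphism $X(\graph,c)\isomto X(\graph',c')$ is then the same as a lattice isomorphism $N\isomto N'$ carrying fan to fan. Such a map sends the primitive ray generators, which are the vertices $p(v,s)$ of the polytope, to the corresponding generators, so it maps $P(\graph,c)$ onto $P(\graph',c')$. Conversely, any linear isomorphism of the polytopes maps $N$ onto $N'$, since these lattices are generated by the rays, and it carries the face fan to the face fan. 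This gives a bijection between toric isomorphisms modulo the torus and linear isomorphisms of the polytopes. These linear isomorphisms are exactly the affine isomorphisms, because by \cref{lemma:center} both centers of mass are the origin and an affine isomorphism must preserve them.

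Next I apply \cref{theorem:fullness}: every such polytope isomorphism equals $P(f,g)$ for a unique colored isomorphism $(f,g)$. Uniqueness also needs faithfulness, which I would check directly. If $P(f,g)$ is the identity, then $f$ must fix every quadruple of rays, hence every vertex; since there are no multiple edges it fixes every edge; and then $g$ must be trivial because $P(f,g)$ fixes each $e(i)$ up to the sign $g(i)$. This yields full faithfulness. Given such an isomorphism, $\graph\cong\graph'$ via $f$, and the defining relation $c'(fv)/c(v)=g(i')g(j')g(k')$ says that $c'/f_*c$ is the boundary of the $1$-chain $g$. Hence $c'=f_*c$ in $\HH_0(\graph',\Z/2\Z)$.

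For the automorphism group, I restrict the groupoid to the single object $(\graph,c)$. Forgetting $g$ gives a homomorphism $(f,g)\mapsto f$ into $\Aut(\graph)$. Its kernel consists of the pairs $(\mathrm{id},g)$ with $\partial g=0$, which is $\HH_1(\graph;\Z/2\Z)\cong(\Z/2)^g$ since the graph is connected of genus $g$ and has no leaves. The step I expect to be the main obstacle is surjectivity onto $\Aut(\graph)$. For $f\in\Aut(\graph)$ one needs some $g$ with $\partial g=c/f^*c$, and this is possible exactly when $c$ and $f^*c$ have the same total parity. That holds because an automorphism permutes vertices and so preserves $\prod_v c(v)$. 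Together these give the claimed extension.
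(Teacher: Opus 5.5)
Your proposal is correct and follows the same route as the paper. \cite[Theorem~3.3.4]{MR2810322} identifies toric isomorphisms modulo the torus with lattice isomorphisms of the fans, hence with affine isomorphisms of the fan polytopes $P(\graph,c)$, and \cref{theorem:fullness} then gives full faithfulness, while your explicit faithfulness check, the barycenter argument that affine isomorphisms are linear, and the parity argument for surjectivity of $(f,g)\mapsto f$ onto $\Aut(\graph)$ fill in details the paper leaves implicit (it only points to the torsor description in \cref{section:colorings}).
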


The connectedness assumption in \cref{cor:imp} can be dropped and we will get an isomorphism of graphs and hence a bijection between its connected components and the equality of the coloring in the $0$-th homology will be with respect to that bijection.
This result complements other recent progress in the field, such as the reconstruction of graphs from their posets of quasistable divisors \cite{2309.04570}.
 
\renewcommand*{\bibfont}{\small}
\printbibliography

\emph{Pieter Belmans}, \url{p.belmans@uu.nl} \\
Mathematical Institute, Utrecht University, Budapestlaan 6, 3584CD Utrecht, Netherlands

\emph{Sergey Galkin}, \url{sergey@puc-rio.br} \\
PUC-Rio, Departamento de Matem\'atica, Rua Marqu\^es de S\~ao Vicente 225, G\'avea, Rio de Janeiro, Brasil

\emph{Swarnava Mukhopadhyay}, \url{swarnava@math.tifr.res.in} \\
School of Mathematics, Tata Institute of Fundamental Research, 1 Homi Bhabha Road, Navy Nagar, Colaba, Mumbai 400005, India
\end{document}